\newtheorem{defi}{Definition} 
\newtheorem{thm}[defi]{Theorem}
\newtheorem{theorem}[defi]{Theorem}
\newtheorem{example}[defi]{Example}
 \newtheorem{prop}[defi]{Proposition}
\newtheorem{lemma}[defi]{Lemma}
\newtheorem{cor}[defi]{Corollary}
\newcommand{\twosystem}[2]{\left\{\begin{aligned} &#1\\ &#2\end{aligned}\right.}
\newcommand{\nero}{\smallskip$\bullet\quad$\rm}
\newcommand{\matrice}{\begin{pmatrix}}
\newcommand{\ok}{\end{pmatrix}}
\newcommand{\eps}{\epsilon}
\newcommand{\scal}[2]{\langle{#1},{#2}\rangle}
\newcommand{\abs}[1]{\lvert{#1}\rvert}
\newcommand{\bd}{\partial}
\newcommand{\derive}[2]{\dfrac{\bd #1}{\bd#2}}
\DeclareMathOperator{\arctanh}{arctanh}
\DeclareMathOperator{\arccoth}{arcCoth}
\title{Geometry of the magnetic Steklov problem on Riemannian annuli}
\author{Luigi Provenzano\footnote{Sapienza Universit\`a di Roma\,, Dipartimento di Scienze di Base e Applicate per l'Ingegneria\,, Via Scarpa 16\,, 00161 Roma\,, Italy. Email:\, luigi.provenzano@uniroma1.it} and Alessandro Savo\footnote{Sapienza Universit\`a di Roma\,, Dipartimento di Scienze di Base e Applicate per l'Ingegneria\,, Via Scarpa 16\,, 00161 Roma\,, Italy. Email:\, alessandro.savo@uniroma1.it}}
\date{}
\begin{document}

\maketitle

\noindent
{\bf Abstract.} 
We study the geometry of  the first two eigenvalues of a magnetic Steklov problem on an annulus $\Sigma$ (a compact Riemannian surface with genus zero and two boundary components), the magnetic potential being the harmonic one-form having flux $\nu\in\mathbb R$ around any of the two boundary components. The resulting spectrum can be seen as a perturbation of the classical, non-magnetic Steklov spectrum, obtained when $\nu=0$ and studied e.g., by Fraser and Schoen in \cite{fs1,fs2}.
We obtain sharp upper bounds for the first and the second normalized eigenvalues and we discuss the geometry of the maximisers. 

 Concerning the first eigenvalue, we isolate a noteworthy class of maximisers which we call $\alpha$-surfaces: they are free-boundary surfaces which are stationary for a weighted area functional (depending on the flux) and have proportional principal curvatures at each point; in particular, they belong to the class of linear Weingarten surfaces. 
  
Inspired by \cite{fs1}, we then study the second normalized eigenvalue for a fixed flux $\nu$ and prove the existence of a maximiser for rotationally invariant metrics. Moreover, the corresponding eigenfunctions define a free-boundary immersion in the unit ball of $\mathbb R^3$. Finally, we prove that the second normalized eigenvalue associated to a flux $\nu$  has an absolute maximum when $\nu=0$, the corresponding maximiser being the critical catenoid. 

%The inspiring problem, which we plan to investigate further in a future paper, is to study the geometry of a Riemannian surface when its Laplace spectrum is perturbed by a magnetic potential given by a harmonic one-form.

\vspace{11pt}

\noindent
{\bf Keywords:}  Magnetic Laplacian, Steklov problem, first and second eigenvalue, maximisation, conformal modulus, free-boundary immersions, $\alpha$-surfaces.

\vspace{6pt}
\noindent
{\bf 2020 Mathematics Subject Classification:} 58J50, 58J32, 35P15, 53C42

\tableofcontents

\section{Introduction}

This paper deals with a magnetic Steklov problem on a  Riemannian annulus $\Sigma$ with smooth boundary $\partial\Sigma$. For the precise definitions we refer to Subsection \ref{sub:intro:nomag} below. We are interested in sharp inequalities for the first two eigenvalues, and in the geometry of the optimizers. First, we review the main features of the classical (non-magnetic) Steklov problem on Riemannian surfaces. Then we give a short summary of the type of results that we obtain in the magnetic case.

\subsection{The non-magnetic case}\label{sub:intro:nomag} Let $\Sigma$ be a connected Riemannian surface with boundary $\partial\Sigma$. The classical Steklov problem (see \cite{steklov}) is:
\begin{equation}\label{steklov_c}
\begin{cases}
\Delta u=0 & {\rm in\ }\Sigma\\
\frac{\partial u}{\partial N}=\sigma u & {\rm on\ }\partial\Sigma.
\end{cases}
\end{equation}
Here $N$ is the exterior unit normal to $\bd\Sigma$ and $\Delta$ is the Laplace-Beltrami operator.
It is well-known that \eqref{steklov_c} admits a discrete sequence of non-negative eigenvalues of finite multiplicity, diverging to $+\infty$:
$$
0=\sigma_1(\Sigma)<\sigma_2(\Sigma)\leq\dots\leq\sigma_k(\Sigma)\leq\cdots
$$
The first eigenvalue $\sigma_1$ is zero, with associated eigenfunctions given by the constants.   In this paper the first {\it positive} eigenvalue of \eqref{steklov_c} will be numbered $\sigma_2(\Sigma)$: the reason is that in the magnetic case we will consider the first eigenvalue $\sigma_1$ which, very often, is {\it positive}. 

\medskip

An {\it isoperimetric inequality} for $\sigma_2$ was given by Weinstock (see \cite{weinstock}). To recall it, first observe that the $k$-th normalized  eigenvalue, distinguished by a bar:
$$
\bar\sigma_k(\Sigma):=|\partial\Sigma|\sigma_k(\Sigma)
$$
is invariant by homotheties.  

\begin{theorem}[Weinstock, 1954] If $\Sigma$ is a simply connected planar domain, then
$
\bar\sigma_2(\Sigma)\leq 2\pi.
$
Equality holds if and only if $\Sigma$ is a disk.
\end{theorem}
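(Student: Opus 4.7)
The plan is to follow the classical Hersch-type argument, exploiting the conformal structure available on a simply connected planar domain. First, I would recall the variational characterization of the first positive Steklov eigenvalue:
$$
\sigma_2(\Sigma) \;=\; \min\left\{ \frac{\int_\Sigma |\nabla u|^2\,dA}{\int_{\partial\Sigma} u^2\,ds}\;:\; u\not\equiv 0,\ \int_{\partial\Sigma} u\,ds = 0 \right\}.
$$
The task is thus reduced to producing two admissible trial functions whose Rayleigh quotients sum to the desired bound.

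The trial functions come from the Riemann mapping theorem. Let $\phi:\mathbb{D}\to\Sigma$ be a conformal equivalence, extending continuously to the closure under mild regularity of $\partial\Sigma$. For any Möbius automorphism $T$ of $\mathbb{D}$, take as trial functions $u_i := T_i\circ\phi^{-1}$ on $\Sigma$, where $T=(T_1,T_2)$ denotes the Euclidean components of $T$. Conformal invariance of the Dirichlet integral in dimension two, together with $T:\mathbb{D}\to\mathbb{D}$ being a degree-one holomorphic diffeomorphism, gives
$$
\int_\Sigma |\nabla u_i|^2\,dA \;=\; \int_{\mathbb{D}} |\nabla T_i|^2\,dA \;=\; \pi \qquad (i=1,2),
$$
while $u_1^2+u_2^2=1$ on $\partial\Sigma$ because $T$ preserves $\partial\mathbb{D}$. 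Summing the two Rayleigh quotients yields
$$
\sigma_2(\Sigma)\,|\partial\Sigma| \;\le\; 2\pi,
$$
provided $T$ has been chosen so that $\int_{\partial\Sigma} u_i\,ds=0$ for $i=1,2$.

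Securing this centering is the step I expect to be the main technical point, and it is handled by Hersch's lemma. Parametrize the Möbius automorphisms (modulo harmless rotations) by the image $a\in\mathbb{D}$ of the origin, writing $T_a$ for the corresponding automorphism. The boundary-centroid map
$$
\Phi(a) \;:=\; \frac{1}{|\partial\Sigma|}\left(\int_{\partial\Sigma} T_a^1\circ\phi^{-1}\,ds,\ \int_{\partial\Sigma} T_a^2\circ\phi^{-1}\,ds\right) \;\in\;\mathbb{R}^2
$$
extends continuously to $\overline{\mathbb{D}}$, with boundary behaviour $\Phi(a)\to a$ as $|a|\to 1$ (the relevant probability measure concentrates at a point as $a$ approaches $\partial\mathbb{D}$). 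A Brouwer degree argument then produces some $a^\ast\in\mathbb{D}$ with $\Phi(a^\ast)=0$, giving the required admissible trial functions.

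Finally, for the equality case, if $\bar\sigma_2(\Sigma)=2\pi$ both $u_i$ must be genuine eigenfunctions for $\sigma_2$. Replacing $\phi$ by $\phi\circ T_{a^\ast}^{-1}$ if necessary, we may assume $u_i=X_i\circ\phi^{-1}$. The Steklov boundary condition $\partial_N u_i = \sigma_2 u_i$ transferred to $\partial\mathbb{D}$ via the conformal factor $|\phi'|$ becomes $|\phi'|^{-1}\partial_{N_0}X_i = \sigma_2 X_i$ on $\partial\mathbb{D}$; using $\partial_{N_0}X_i = X_i$ on the unit circle, this forces $|\phi'|$ to be constant on $\partial\mathbb{D}$. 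Standard rigidity for bounded holomorphic functions then identifies $\phi$ with a Möbius transformation, so $\Sigma$ must be a round disk.
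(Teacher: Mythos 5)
The paper does not prove this statement: it is quoted as classical background (Weinstock, 1954) with a citation to \cite{weinstock}, so there is no internal proof to compare against. Your argument is the standard Hersch-normalization proof of Weinstock's inequality and is correct in its essentials: the conformal map to the disk, the M\"obius recentering via a degree argument, conformal invariance of the Dirichlet energy giving $\int_\Sigma|\nabla u_i|^2=\pi$ for each component, and $u_1^2+u_2^2=1$ on $\partial\Sigma$ together yield $\sigma_2|\partial\Sigma|\le 2\pi$. Two small points deserve care. First, in the centering step the boundary limit $\Phi(a)\to a$ requires the pushforward of arc length under $\phi^{-1}$ to be non-atomic on $\partial\mathbb D$; this holds here (for smooth $\partial\Sigma$ it is absolutely continuous), but it is the hypothesis that makes Hersch's lemma work and should be stated. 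Second, in the equality case the correct conclusion from $|\phi'|\equiv\const$ on $\partial\mathbb D$ is not that $\phi$ is a M\"obius transformation but that $\phi'$ is constant: $\log|\phi'|$ is harmonic on $\mathbb D$ (as $\phi'$ is zero-free holomorphic) with constant boundary values, hence constant, so $\phi'$ has constant modulus and, being holomorphic, is itself constant; thus $\phi$ is a similarity and $\Sigma$ is a round disk. This step also tacitly uses that $\phi'$ extends continuously and without zeros to $\overline{\mathbb D}$ (Kellogg's theorem for smooth boundary), which is worth flagging. The trivial converse, that the disk attains $2\pi$, should be recorded to complete the ``if and only if''.
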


The upper bound is valid, more generally, for any simply connected Riemannian surface. For an arbitrary topology the following upper bound was proved in \cite{gipo}: 

\begin{theorem}[Girouard-Polterovich, 2012] If $\Sigma$ is a Riemannian surface with genus $\gamma$ and $k$ boundary components we have:
\begin{equation}\label{gp}
\bar\sigma_2(\Sigma)\leq 2\pi(\gamma+k).
\end{equation}
\end{theorem}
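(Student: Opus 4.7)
The plan is to adapt the Hersch folding method by using a branched conformal cover of the disk whose degree is controlled by the topology of $\Sigma$. View $\Sigma$ as a Riemann surface via its conformal class. By a theorem of Gabard, there exists a proper holomorphic map $\phi\colon\Sigma\to\overline{\mathbb{D}}$ of degree $d\leq \gamma+k$, where $\mathbb{D}\subset\mathbb{R}^2$ is the open unit disk. The Dirichlet energy on a surface is conformally invariant, so I can freely work with the conformal structure induced by the given Riemannian metric.

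First I would recall the variational characterization
$$\sigma_2(\Sigma)=\inf\Bigl\{\frac{\int_\Sigma|\nabla u|^2\,dA}{\int_{\partial\Sigma} u^2\,ds}\;:\; u\in H^1(\Sigma)\setminus\{0\},\ \int_{\partial\Sigma} u\,ds=0\Bigr\},$$
and apply Hersch's renormalization lemma. The three-parameter group $\mathrm{Aut}(\mathbb{D})$ acts on $\overline{\mathbb{D}}$ by M\"obius transformations, and a standard Brouwer degree argument yields $\Psi\in\mathrm{Aut}(\mathbb{D})$ such that $\psi:=\Psi\circ\phi=(\psi_1,\psi_2)$ has boundary means zero, i.e., $\int_{\partial\Sigma}\psi_i\,ds=0$ for $i=1,2$.

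Then $\psi_1,\psi_2$ are admissible test functions. Summing the two resulting Rayleigh inequalities yields
$$\sigma_2(\Sigma)\int_{\partial\Sigma}(\psi_1^2+\psi_2^2)\,ds\;\leq\;\int_\Sigma(|\nabla\psi_1|^2+|\nabla\psi_2|^2)\,dA.$$
Since $\phi$ is proper, $|\psi|^2\equiv 1$ on $\partial\Sigma$, so the left-hand side equals $\sigma_2(\Sigma)|\partial\Sigma|$. Since $\psi$ is holomorphic, the Cauchy--Riemann equations give $|\nabla\psi_1|^2+|\nabla\psi_2|^2=2J_\psi$ pointwise, where $J_\psi$ is the Jacobian; hence the right-hand side equals $2\int_\Sigma J_\psi\,dA=2d\,|\mathbb{D}|=2\pi d\leq 2\pi(\gamma+k)$, yielding the claimed inequality.

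The main obstacle I expect is the Hersch renormalization. One parametrizes a two-dimensional slice of $\mathrm{Aut}(\mathbb{D})$ by $z\in\mathbb{D}$ (quotienting out the rotation freedom), checks that the associated center-of-mass map $z\mapsto\int_{\partial\Sigma}\Psi_z\circ\phi\,ds\in\overline{\mathbb{D}}$ extends continuously to the identity on $\partial\mathbb{D}$---which requires that $\phi_*(ds|_{\partial\Sigma})$ is not concentrated at a single point of $\partial\mathbb{D}$, a fact ensured by properness and positive degree of $\phi$---and then applies Brouwer degree to produce a zero. Gabard's theorem itself is the other essential input; I would invoke it as a black box.
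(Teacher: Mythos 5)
This theorem is quoted in the paper as background from \cite{gipo} and is not proved there, so there is no internal proof to compare against. Your argument correctly reconstructs the original Girouard--Polterovich proof: Gabard's bound $d\leq\gamma+k$ on the degree of a proper conformal branched cover $\phi:\Sigma\to\overline{\mathbb{D}}$, Hersch renormalization to kill the boundary means of the two coordinate functions, conformal invariance of the Dirichlet energy, and the identity $\int_\Sigma\bigl(|\nabla\psi_1|^2+|\nabla\psi_2|^2\bigr)\,dA=2\pi d$ for holomorphic $\psi$; you also correctly flag the only delicate point, namely that the pushed-forward boundary measure is not a point mass (guaranteed here since $\phi$ restricted to $\partial\Sigma$ covers all of $\partial\mathbb{D}$), so the proposal is complete.
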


The previous bound can be refined in the case of annuli: by definition, a {\it Riemannian annulus} (or simply, an {\it annulus})  is a Riemannian surface having genus zero and two boundary components. 
It is a Riemannian surface diffeomorphic to the {\it flat annulus} $[0,1]\times\mathbb S^1$. In this case, \eqref{gp} reads $\bar\sigma_2\leq 4\pi$, which is not sharp because one has the following result proved in \cite{fs1}:

\begin{theorem}[Fraser-Schoen, 2011] Let $M_0$ be the positive root of the equation $x\tanh x=1$.  Then, for any rotationally invariant annulus one has:
$$
\bar\sigma_2(\Sigma)\leq \dfrac{4\pi}{M_0}=\bar\sigma_2(\Sigma_c)
$$
Equality holds for a particular rotation surface $\Sigma_c$, called {\rm critical catenoid}.
\end{theorem}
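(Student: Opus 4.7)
The plan is to reduce the problem, via two-dimensional conformal invariance of the Dirichlet energy, to an explicit eigenvalue computation on a flat cylinder, followed by a finite-dimensional maximisation. Any rotationally invariant annulus $(\Sigma,g)$ is isometric to $([-T,T]\times\mathbb{S}^1,\, e^{2\omega(t)}(dt^2+d\theta^2))$ for a uniquely determined modulus $T>0$ and a profile $\omega$. Since in dimension two $\int|\nabla u|^2\,dA$ is conformally invariant while $ds=e^{\omega}\,d\theta$, the Rayleigh characterization of $\sigma_2$ on the flat cylinder $C_T$ reads
\begin{equation*}
\sigma_2(\Sigma)=\inf\Bigl\{\tfrac{\int_{C_T}|\nabla_0 u|^2\,dA_0}{\int_{\partial C_T} u^2 e^{\omega}\,d\theta}\,:\,\int_{\partial C_T} u\,e^{\omega}\,d\theta=0\Bigr\},
\end{equation*}
and consequently depends on $\omega$ only through the two boundary values $a:=e^{\omega(T)}$ and $b:=e^{\omega(-T)}$. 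Since $|\partial\Sigma|=2\pi(a+b)$, the normalized eigenvalue $\bar\sigma_2(\Sigma)$ is a function of the three parameters $(T,a,b)$ alone.

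Next, separating variables by Fourier modes in $\theta$, the harmonic functions on $C_T$ split into the mode $k=0$ (spanned by $\{1,t\}$) and the modes $\{\cosh(kt),\sinh(kt)\}\cdot e^{\pm i k\theta}$ for $k\ge 1$. Imposing the Steklov boundary conditions in each mode reduces to a $2\times 2$ linear system whose vanishing determinant yields
\begin{equation*}
\sigma^{(0)}_2=\frac{a+b}{2abT},\qquad \sigma^{(k)}_1=\frac{2k\sinh(2kT)}{(a+b)\cosh(2kT)+\sqrt{(a-b)^2\cosh^2(2kT)+4ab}},\ k\ge1.
\end{equation*}
A routine monotonicity check shows $\sigma^{(k)}_1>\sigma^{(1)}_1$ for $k\ge 2$, so $\sigma_2(\Sigma)=\min\bigl(\sigma^{(0)}_2,\,\sigma^{(1)}_1\bigr)$.

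The crux is then the three-variable maximisation of $F(T,a,b):=\min(\bar\sigma^{(0)}_2,\bar\sigma^{(1)}_1)$, with $\bar\sigma=2\pi(a+b)\sigma$. Using the identity $(a-b)^2\cosh^2(2T)+4ab=(a+b)^2\cosh^2(2T)-4ab\sinh^2(2T)$, at fixed $a+b$ and $T$ the quantity $\bar\sigma^{(0)}_2=\pi(a+b)^2/(Tab)$ is strictly decreasing in $ab$ whereas $\bar\sigma^{(1)}_1$ is strictly increasing in $ab$. Hence the inner maximum in $(a,b)$ is attained either on the symmetric slice $a=b$ or on the balance curve $\bar\sigma^{(0)}_2=\bar\sigma^{(1)}_1$. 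On the symmetric slice the formulas collapse to $\bar\sigma^{(0)}_2=4\pi/T$ and $\bar\sigma^{(1)}_1=4\pi\tanh T$, whose minimum is maximised at $T\tanh T=1$, i.e.\ $T=M_0$, with common value $4\pi/M_0$. Along the balance curve a short calculation gives $\bar\sigma=4\pi/h(T)$, where $h(T):=2\coth(2T)-T^{-1}$; this function is strictly increasing (which reduces to the elementary $\sinh(2T)>2T$) and satisfies $h(M_0)=M_0$, so the balance value is strictly less than $4\pi/M_0$ whenever it is realised with $T>M_0$ (and the balance lies outside the physical region $ab\le(a+b)^2/4$ for $T<M_0$).

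The main obstacle will be this last monotonicity step, together with confirming that the higher Fourier modes $k\ge 2$ never realise $\sigma_2$; both amount to careful one-variable calculus estimates. Once these are established, the bound $\bar\sigma_2\le 4\pi/M_0$ follows, with equality forcing $T=M_0$ and $a=b$. This symmetric conformally trivial cylinder of modulus $M_0$ is, up to homothety, the intrinsic geometry of the catenoid that is free-boundary in the unit ball of $\mathbb{R}^3$, that is the critical catenoid $\Sigma_c$; its Euclidean coordinate functions $x_1,x_2$ serve as the extremal mode-one Steklov eigenfunctions, completing the identification of the maximiser.
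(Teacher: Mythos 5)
Your proposal is correct and, for its first half, coincides with what the paper actually does: the paper does not reprove this cited theorem of Fraser--Schoen directly, but its proof of the magnetic generalization (Theorem \ref{max_RI}, Appendix \ref{app:5}, Part 1) performs exactly your reduction --- conformal flattening to the cylinder with a boundary weight, separation of variables, and the resulting two branches; indeed your formula $\sigma^{(k)}_1=\frac{2k\sinh(2kT)}{(a+b)\cosh(2kT)+\sqrt{(a-b)^2\cosh^2(2kT)+4ab}}$ is the rationalized form of the paper's \eqref{L10}--\eqref{L20} at $\nu=0$, $Z=2T$, $f(0)=b$, $f(Z)=a$. Where you genuinely diverge is the final two-parameter maximisation. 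The paper (Parts 2--3) fixes the boundary-length ratio $A$, shows the two branches balance at a unique modulus $Z(A)$, and then proves the balanced value is maximal at $A=1$ by realizing each balanced configuration geometrically as a slab of a translated catenoid $\Sigma_a$ and showing $g'(a)<0$ --- a fairly heavy computation. You instead slice by $(T,a+b)$, observe the opposite strict monotonicities of $\bar\sigma^{(0)}_2$ and $\bar\sigma^{(1)}_1$ in $ab$, and obtain the clean closed form $4\pi/h(T)$ with $h(T)=2\coth(2T)-T^{-1}$ on the balance curve; I checked that this identity is correct, that $h'>0$ reduces to $\sinh(2T)>2T$ as you say, and that $h(T)=T$ exactly when $T=M_0$ (writing $2\coth(2T)=\tanh T+\coth T$ and using that $u\mapsto u+u^{-1}$ identifies $\tanh T$ with $1/T$), which justifies both $h(M_0)=M_0$ and your claim that the balance point leaves the physical region $ab\le (a+b)^2/4$ precisely for $T<M_0$. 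Your route buys an elementary, fully explicit endgame at the cost of losing the geometric interpretation of the intermediate maximisers; the paper's route is what generalizes to $\nu\neq 0$, where no such closed form for the balanced value seems available. The two items you defer --- monotonicity of $\sigma^{(k)}_1$ in $k$ (which the paper also only asserts as ``standard'') and the sign analysis of $h(T)-T$ --- are genuine but routine, so I see no gap.
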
 
Note that $M_0\approx 1.2$. After some deep additional work, the authors were able to extend the result to {\it all} annuli in \cite{fs2}:

\begin{theorem}[Fraser-Schoen, 2016] For any annulus $\Sigma$ one has
$\bar\sigma_2(\Sigma)\leq \bar\sigma_2(\Sigma_c)$. Hence the critical catenoid is an absolute maximum for the lowest (positive) normalized Steklov eigenvalue on annuli.
\end{theorem}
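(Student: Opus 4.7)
The plan is to reduce the general annulus case to the rotationally invariant case already handled by Theorem 4, exploiting the conformal structure of annuli. By uniformization, every Riemannian annulus is conformally equivalent to a unique flat cylinder $C_T = [-T, T] \times \mathbb{S}^1$, where $T > 0$ is the conformal modulus. Since the Dirichlet energy is conformally invariant in dimension two, the Steklov eigenvalues of $(\Sigma, g)$ coincide with those of a weighted Steklov problem on $(C_T, g_0)$, the weight being the conformal factor restricted to $\partial C_T$. One is thus led to a two-level optimization: for each fixed modulus $T$, maximize $\bar\sigma_2$ over the conformal class; then optimize over $T$.

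For the inner problem I would first establish a Hersch-type balancing inequality. Using a conformal diffeomorphism of $C_T$ to balance the center of mass of a conformal immersion of the boundary into a ball, one can arrange that the Euclidean coordinate functions become admissible test functions in the variational characterization of $\sigma_2$. This yields an a priori upper bound on $\bar\sigma_2$ in each conformal class and, combined with compactness of the relevant sub-level sets, produces a maximizing metric (outside of a degenerate regime in $T$ that one excludes by an explicit estimate against $\bar\sigma_2(\Sigma_c)$). The key structural property of any maximizer $(\Sigma, g^\ast)$ is that the first nonconstant Steklov eigenfunctions furnish an isometric free-boundary conformal minimal immersion of $(\Sigma, g^\ast)$ into a Euclidean ball $B^n$; for the annulus topology, one then invokes a classification result for free-boundary minimal annuli to conclude that the extremal immersion is rotationally symmetric, i.e.\ a piece of the catenoid.

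Once rotational symmetry of the maximizer is secured in each conformal class, Theorem 4 immediately gives $\bar\sigma_2(\Sigma) \leq 4\pi/M_0 = \bar\sigma_2(\Sigma_c)$, with equality at the specific value $T_c$ of the modulus corresponding to the critical catenoid; the optimization over $T$ is then trivial since the right-hand side is independent of $T$. The main obstacle, and the reason the 2016 paper required significant new ideas beyond the rotationally invariant case of 2011, is the symmetry step: ruling out non-symmetric critical metrics for $\bar\sigma_2$ on a fixed conformal class is delicate and ultimately relies on uniqueness theorems for free-boundary minimal annuli in $B^3$. A secondary technical difficulty is the existence and regularity theory for extremal metrics, where boundary concentration of the conformal factor must be controlled in order for the compactness argument to close.
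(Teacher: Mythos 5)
First, a point of comparison that matters: the paper does \emph{not} prove this statement. It is imported verbatim from \cite{fs2} as background; the paper's own optimization results (Theorem \ref{max_RI}, proved in Appendix \ref{app:5}) are confined to rotationally invariant annuli, even at $\nu=0$. So your proposal can only be judged against the original Fraser--Schoen argument, and measured against that it has the right overall architecture (existence of an extremal metric, extremal eigenfunctions producing a free boundary minimal immersion, then a uniqueness step) but several load-bearing steps are wrong or missing.

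Concretely: (i) The Hersch-type balancing you invoke on $C_T$ is not available, because the conformal automorphism group of an annulus is compact ($\mathbb S^1$ plus a flip), so there is no noncompact family of conformal maps with which to center the boundary measure; the coarse upper bounds needed for the compactness argument come from elsewhere. (ii) A maximizer of $\bar\sigma_2$ \emph{within a fixed conformal class} does not yield a conformal minimal immersion: the extremal eigenfunctions give a free boundary harmonic map whose Hopf differential is holomorphic but need not vanish, and conformality (hence minimality) is obtained only at the maximum over the one-parameter moduli space. Consequently you cannot "secure rotational symmetry of the maximizer in each conformal class" and then quote the rotationally invariant bound (which is Theorem 3 of this paper, not Theorem 4 --- Theorem 4 is the statement you are proving) classwise; for $T\neq M_0$ the per-class maximizer is not known to be rotationally symmetric, and the two-level optimization does not decouple as you describe. (iii) There is no general "classification result for free-boundary minimal annuli" in $B^3$ to invoke: uniqueness of the critical catenoid among all such annuli is the Fraser--Li conjecture, open at the time. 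What Fraser and Schoen prove and use is the much more special statement that a free boundary minimal annulus in $B^n$ \emph{immersed by first Steklov eigenfunctions} is congruent to the critical catenoid, and the proof leans essentially on that eigenfunction hypothesis (via a multiplicity bound for the first nonzero eigenvalue combined with the eigenfunctions generated by ambient rotations). (iv) Finally, ruling out degeneration of the conformal modulus along a maximizing sequence rests on the strict inequality $\sigma^*(0,2)>\sigma^*(0,1)=2\pi$, supplied by the critical catenoid itself ($4\pi/M_0>2\pi$); this is exactly where the quantitative content enters and cannot be relegated to a parenthetical.
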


The critical catenoid has remarkable geometric properties: it is the unique rotational annulus which is minimal and {\it free  boundary} in the unit ball $B$ of $\mathbb R^3$, where free boundary means that $\partial\Sigma\subseteq\partial B$ and $\Sigma$ meets the boundary of $B$ orthogonally.
To construct $\Sigma_c$, consider the rotational surface in $\mathbb R^3$ obtained by rotating the curve (profile) $x=\cosh z$ around the $z$-axis. 
Draw the line from the origin which is tangent to the profile: the slice that we obtain is free boundary in a certain ball $B_R$ of radius $R$.  Scaling back $B_R$ to $B$ we obtain the critical catenoid. 
 
 \medskip
 
 Minimal free boundary immersions are stationary points of the area functional for variations of $\Sigma$ keeping its boundary on $\partial B$. 
The theory of minimal free boundary immersions is in some ways reminiscent of the theory of minimal spherical immersions, although it is very different in many respects. There are many well established results relating the maximisers for the $k$-th normalized eigenvalue of closed Riemannian surfaces to minimal immersions in $\mathbb S^p$ for some $p\in\mathbb N$  (see e.g., \cite{ei1,ei2,ly0,nad}). Fraser and Schoen \cite{fs2} showed that, when dealing with Riemannian surfaces with boundary, maximisers  for the $k$-th normalized eigenvalue of the Steklov problem give rise to  minimal free boundary immersions in some Euclidean unit ball.
\subsection{Perturbation of the Laplacian by a magnetic flux} 

In this paper we consider a Steklov spectrum by introducing the magnetic Laplacian associated to a closed potential one-form $A$ of flux $\nu\in\mathbb R$ on any annulus $\Sigma$, the flux being the circulation of $A$ around any of the two boundary curves of $\Sigma$ (the two circulations are the same because $A$ is closed). This leads to the perturbed spectrum 
$$
0\leq\sigma_1(\Sigma,\nu)\leq\sigma_2(\Sigma,\nu)\leq\cdots\leq\sigma_k(\Sigma,\nu)\leq\cdots
$$ 
which in fact depends only on the flux $\nu$ (by the gauge invariance property of the magnetic Laplacian) and which we will call {\it magnetic Steklov spectrum of flux $\nu$}. We denote it by ${\rm Spec}(\Sigma,\nu):=\{\sigma_k(\Sigma,\nu), k\in\mathbb N\}$. It has the property of being {\it periodic}: ${\rm Spec}(\Sigma,\nu)={\rm Spec}(\Sigma,\nu + n)$ for all integers $n$, and remains unchanged when $\nu$ is replaced by $-\nu$.
Hence, it is enough to study the spectrum  when the flux takes value in the interval 
$[0,\frac 12]$; in particular, if the flux is an integer, the spectrum reduces to the classical, non-magnetic Steklov spectrum $\{\sigma_k(\Sigma)\}_{k=1}^{\infty}$.

We will postpone the precise definition and the properties of the magnetic Laplacian and its Steklov spectrum to Section \ref{sec:mag:lap}.

\nero From now on, we assume $\nu\in [0,\frac 12]$. 

\medskip
The magnetic Steklov spectrum is a perturbation of the usual Steklov spectrum $\sigma_k(\Sigma):= \sigma_k(\Sigma,0)$ in the sense that $\sigma_k(\Sigma,\nu)\to \sigma_k(\Sigma)$ as $\nu\to 0$ (more generally, as $\nu$ tends to an integer). 

\medskip 

We now briefly describe the results that we obtain for $\sigma_1(\Sigma,\nu)$ and $\sigma_2(\Sigma,\nu)$. As in  the classical case, we are interested in the {\it normalized} eigenvalues $\bar\sigma_k(\Sigma,\nu):=|\partial\Sigma|\sigma_k(\Sigma,\nu)$, which turn out to be invariant by homotheties for all fixed $\nu$. 

\medskip

{\bf Upper bound of the first eigenvalue.} We first recall that in the non-magnetic case $\nu=0$ the first eigenvalue is always zero, with associated eigenspace being given by the constant functions; actually, we have that $\sigma_1(\Sigma,\nu)=0$ if and only if $\nu$ is an integer. Therefore, the geometric invariants arising in the study of $\sigma_1(\Sigma,\nu)$, which is positive when $\nu$ is not an integer, are peculiar to the magnetic situation, and do not have, at least a priori, a non-magnetic counterpart. Analogies with the classical case can be looked for when one examines the second eigenvalue $\sigma_2(\Sigma,\nu)$ which, when $\nu\in\mathbb Z$, reduces to the first positive non-magnetic Steklov eigenvalue.

Having said that we first give a sharp, conformal upper bound of $\bar\sigma_1(\Sigma,\nu)$ for all annuli $\Sigma$. The result reads:
$$
\bar\sigma_1(\Sigma,\nu)\leq 4\pi\nu \tanh(\nu M),
$$
where $M$ is the so-called {\it conformal modulus} of $\Sigma$ (see Subsection \ref{sub:annulus} for the definition). When $\nu$ is an arbitrary real number, the upper bound holds with $\min_{n\in\mathbb Z}\abs{n-\nu}$ replacing $\nu$ on the right hand side. This is presented in Theorem \ref{max_s1}.

\smallskip

Again, in the non-magnetic case $\nu=0$ both sides are zero, and there is nothing to say. 

\smallskip
 
The upper bound turns out to be exact asymptotically as $\nu\to 0$, in the sense that, for any annulus $\Sigma$, one has that $\bar\sigma_1(\Sigma,\nu)\sim 4\pi\nu \tanh(\nu M)$ as $\nu\to 0$: loosely speaking, this means that  one can ``hear''   the conformal class of $\Sigma$ knowing the lowest normalized eigenvalue for all $\nu$ sufficiently small. This result is presented in Theorem \ref{hear_conf}.

\smallskip

{\bf Upper bound of the second eigenvalue.} Then, in Theorem \ref{max_RI} we give a sharp upper bound of the second normalized eigenvalue for any value of the flux and for all {\it rotationally invariant annuli}. It is enough to state it when $\nu\in[0,\frac{1}{2}]$:
\begin{equation}\label{ubsigmatwo}
\bar\sigma_2(\Sigma,\nu)\leq\begin{cases}
 \dfrac{4\pi}{M_0}& {\rm when\ } \nu=0\\
 4\pi\nu \tanh(\nu M^*) & {\rm when \ }\nu\in (0,\frac 12)\\
2\pi & {\rm when\ } \nu=\frac 12
\end{cases}
\end{equation}
When $0<\nu<\frac 12$, the constant $M^*=M^*(\nu)>0$ denotes the unique positive root  of  the equation $(1-\nu)\tanh((1-\nu)x)=\nu\coth(\nu x)$  (it tends to $+\infty$ when $\nu\to\frac 12$).   In particular we show that, if the flux is not a half-integer, a maximiser for $\bar\sigma_2(\Sigma,\nu)$ exists in the class of all rotationally invariant annuli. Of course, the result when $\nu=0$ is due to Fraser and Schoen \cite{fs1}.
The upper bound in  \eqref{ubsigmatwo} is continuous and decreasing in $\nu$ for $\nu\in [0,\frac 12]$ (see Corollary \ref{cor_max_RI}), hence we get:
$$
\bar\sigma_2(\Sigma,\nu)\leq \bar\sigma_2(\Sigma_c,0)=\frac{4\pi}{M_0}\approx\dfrac{4\pi}{1.2},
$$
which implies the following fact.

\begin{theorem}
 The critical catenoid maximizes $\bar\sigma_2(\Sigma,\nu)$ among all rotationally invariant annuli, for all values of the flux $\nu\in\mathbb R$.
\end{theorem}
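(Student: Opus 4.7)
The plan is to realise this final statement as a direct consequence of the sharp upper bound \eqref{ubsigmatwo} from Theorem \ref{max_RI}, combined with its monotonicity in $\nu$ recorded in Corollary \ref{cor_max_RI}. My first step is to reduce to $\nu\in[0,\tfrac12]$: I exploit the two structural symmetries of the magnetic Steklov spectrum already noted in the excerpt, namely the periodicity $\mathrm{Spec}(\Sigma,\nu)=\mathrm{Spec}(\Sigma,\nu+n)$ for $n\in\mathbb Z$ (a consequence of gauge invariance) and the invariance under $\nu\mapsto-\nu$. Since both symmetries preserve every eigenvalue, the supremum of $\bar\sigma_2(\Sigma,\nu)$ over $\nu\in\mathbb R$ equals its supremum over the fundamental interval $[0,\tfrac12]$.

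Next, I apply \eqref{ubsigmatwo} to conclude that, for every rotationally invariant annulus $\Sigma$ and every $\nu\in[0,\tfrac12]$, one has $\bar\sigma_2(\Sigma,\nu)\le F(\nu)$, where
\[
F(\nu)=\begin{cases} 4\pi/M_0,& \nu=0,\\ 4\pi\nu\tanh(\nu M^*(\nu)),& \nu\in(0,\tfrac12),\\ 2\pi,& \nu=\tfrac12. \end{cases}
\]
The crux is then to show that $F$ attains its global maximum on $[0,\tfrac12]$ at $\nu=0$: but this is exactly the content of Corollary \ref{cor_max_RI}, which asserts that $F$ is continuous and decreasing on $[0,\tfrac12]$. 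Hence $\max_{[0,1/2]}F=F(0)=4\pi/M_0$.

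To close the argument, I invoke the Fraser--Schoen identification $4\pi/M_0=\bar\sigma_2(\Sigma_c,0)$ for the non-magnetic problem. Chaining the two bounds gives
\[
\bar\sigma_2(\Sigma,\nu)\le F(\nu)\le F(0)=\bar\sigma_2(\Sigma_c,0)\qquad\text{for every }\nu\in\mathbb R,
\]
and the critical catenoid is therefore the global maximiser. The genuine difficulty has been absorbed upstream: the sharp bound \eqref{ubsigmatwo} carries the analytic content (a test-function argument in the magnetic setting built on the conformal modulus), and the monotonicity in Corollary \ref{cor_max_RI} requires tracking how the implicit parameter $M^*(\nu)$---defined by $(1-\nu)\tanh((1-\nu)M^*)=\nu\coth(\nu M^*)$ and diverging as $\nu\to\tfrac12$---balances against the explicit prefactor $4\pi\nu$. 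Once those two ingredients are in place, the theorem itself reduces to the one-variable observation above.
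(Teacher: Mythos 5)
Your proof is correct and follows essentially the same route as the paper: reduce to $\nu\in[0,\tfrac12]$ by periodicity and $\nu\mapsto-\nu$ invariance, apply the sharp bound of Theorem \ref{max_RI}, and use the monotonicity and limit $\sigma_2^*(\nu)\to 4\pi/M_0$ from Lemma \ref{s*incr} (recorded in Corollary \ref{cor_max_RI}) to conclude. One small caveat: the expression $4\pi\nu\tanh(\nu M^*(\nu))$ you copy from \eqref{ubsigmatwo} is a typo in the paper (it should be $4\pi\nu\coth(\nu M^*(\nu))=4\pi(1-\nu)\tanh((1-\nu)M^*(\nu))$, as in Theorem \ref{max_RI}; the $\tanh$ version would tend to $0$, not $4\pi/M_0$, as $\nu\to 0$), but since your monotonicity and endpoint values are taken from the corollary rather than from that formula, the argument is unaffected.
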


This generalises \cite{fs1} (i.e., $\nu=0$) to the magnetic case. We conjecture that the theorem holds for all annuli, and not just in the rotationally invariant case. 
 
\medskip

{\bf Geometry of maximisers.} We then study the geometry of maximisers and ask the following question:

\nero are the maximisers (conformally) minimal in some sense and free boundary in the unit ball of $\mathbb R^3$? 

\medskip

For the first eigenvalue $\bar\sigma_1(\Sigma,\nu)$, we show that the maximisers are conformally equivalent to free boundary rotation surfaces which are minimal for the weighted area
$$
{\rm Area}_{\alpha}(\Sigma):=\int_{\Sigma}\rho^{\alpha-1}\,dv
$$
for some value $\alpha>0$ depending on the conformal modulus of $\Sigma$, where $\rho$ is the distance to the axis of rotation (see Theorem \ref{asurf-s1}). 
We then have a family of distinguished free boundary surfaces, which we call critical $\alpha$-surfaces: they foliate the unit ball in $\mathbb R^3$ and have interesting geometric properties, for example their principal curvatures $\kappa_1,\kappa_2$ are proportional at every point ($\kappa_1+\alpha \kappa_2=0$ where $\kappa_1$ is the meridian curvature). In particular, these maximisers are a particular case of surfaces already known to differential geometers, called Weingarten surfaces. The critical catenoid is the member of the family corresponding to $\alpha=1$. We give a list of equivalent characterizations of $\alpha$-surfaces (one of them involving the magnetic Laplacian) in Theorem \ref{geom-asurf}.

\smallskip 

Concerning the maximisers of $\bar\sigma_2(\Sigma,\nu)$, we observe that the multiplicity of the corresponding eigenvalue is $2$, and the (suitably normalized) eigenfunctions define a free boundary immersion in the unit ball which is minimal in an appropriate sense, see Theorems \ref{FSs2} and \ref{FSmagnetic}. We plan to investigate the variational meaning of this notion of minimality in a forthcoming paper.

\medskip In this paper we have considered the case of Riemannian annuli, that is, Riemannian surfaces of genus $0$ and $2$ boundary components. The easiest setting for studying maximisers for magnetic Steklov eigenvalues and their geometry is the simply connected case. In this case, the analogous inequality of Weinstock for $\bar\sigma_1(\Sigma,\nu)$  has been proved in \cite{cps}: namely, among all simply connected Riemannian surfaces, the first normalized eigenvalue is maximised (up to $\sigma$-homotheties) by a planar disk centered at the pole of the magnetic potential $A=\frac{\nu}{x^2+y^2}(-ydx+xdy)$, and the maximum value is $2\pi\nu$.

\medskip The present paper is organized as follows. In Section \ref{sec:mag:lap} we introduce the magnetic Laplacian on Riemannian surfaces, in particular on Riemannian annuli, and we list some relevant properties and definitions. In Section \ref{spec:rot} we compute explicitly the spectrum of rotationally invariant, symmetric annuli and identify the first and second eigenvalue. In Section \ref{sec:bound:first} we state the results on the maximisation of the first normalized eigenvalue, while in Section \ref{sec:bound:second} we state the results on the maximisation of the second normalized eigenvalue. In Section \ref{geom1} we discuss the geometry of maximisers of $\bar\sigma_1$ while in Section \ref{geom2} we discuss the geometry of maximisers of $\bar\sigma_2$. All the proofs are contained in a series of appendices. Namely, the proofs of the results of Sections \ref{sec:mag:lap}, \ref{sec:bound:first}, \ref{sec:bound:second}, \ref{geom1}, \ref{geom2} are contained, respectively, in Appendices \ref{app:2}, \ref{app:4}, \ref{app:5}, \ref{app:6}, \ref{app:7}.

\section{The magnetic Laplacian on Riemannian annuli}\label{sec:mag:lap}

\subsection{The magnetic Laplacian}
Let $\Sigma$ be a Riemannian surface, with or without boundary, and let $A$ be a smooth {\it real} one-form on $\Sigma$. The two-form $\beta:= dA$ is called the {\it magnetic field} and $A$ is the {\it potential one-form}. 
The magnetic Laplacian associated with the pair $(\Sigma,A)$ is denoted by $\Delta_A$ and acts on smooth complex valued functions $u$ as follows:
$$
\Delta_Au=\delta^Ad^Au
$$
where $d^Au=du-iuA$ is the {\it magnetic differential} and $\delta^A\omega=\delta\omega+i\omega(A^{\sharp})$ for all one-forms $\omega$,  where $A^{\sharp}$ is the dual vector field of $A$. $\delta^A\omega$ is also called the {\it magnetic co-differential}. More explicitly:
$$
\Delta_Au=\Delta u+\abs{A}^2u+2i\scal{A}{du}-i u\delta A.
$$

\medskip

The magnetic Steklov problem amounts to finding all $\Delta_A$-harmonic functions $u$ and all eigenvalues $\sigma$ satisfying the problem:
\begin{equation}\label{steklov_mag}
\begin{cases}
\Delta_Au=0 & {\rm on\ }\Sigma\\
d^Au(N)=\sigma u & {\rm on\ }\partial\Sigma,
\end{cases}
\end{equation}
where $N$ is the outer unit normal to $\Sigma$. It is well-known that the spectrum is non-negative, discrete, and increasing to $+\infty$; we stress that it depends on the pair $(\Sigma,A)$ and, of course, not only on $\Sigma$:
$$
0\leq\sigma_1(\Sigma, A)\leq\sigma_2(\Sigma,A)\leq\dots\leq\sigma_k(\Sigma,A)\leq\cdots
$$
From a variational point of view, the magnetic Laplacian is naturally associated to the energy quadratic form
$$
\mathcal Q(u)=\int_{\Sigma}\abs{d^Au}^2\,dv,
$$
and hence we have the usual min-max characterization of the eigenvalues:
\begin{equation}\label{minmax}
\sigma_k(\Sigma,A)=\min_{\substack{U\subset H^1(\Sigma)\\{\rm dim}U=k}}\max_{0\ne u\in U}\frac{\mathcal Q(u)}{\int_{\partial\Sigma}|u|^2ds},
\end{equation}
where $H^1(\Sigma)$ is the usual Sobolev space of functions in $L^2(\Sigma)$ with first order derivatives in $L^2(\Sigma)$. Of course, the Steklov problem when $A=0$ reduces to the classical (non-magnetic) Steklov problem. Actually, this happens in greater generality. 
In fact, {\it gauge invariance} implies that the spectrum of $(\Sigma, A)$ is the same as the spectrum of $(\Sigma, A+df)$ for any  real function $f$ on $\Sigma$, which follows from the fact that $\Delta_A$ and $\Delta_{A+df}$ are unitarily equivalent.  Thus, in treating the eigenvalues, we have the freedom to alter the potential one-form by any exact one-form. 

A bit more generally, we have gauge invariance also when $A$ is closed and the flux of $A$ around any closed curve in $\Sigma$ is an integer: by definition, if $c:[0,1]\to\Sigma$ is a loop (a continuous curve such that  $c(0)=c(1)$) the number
$$
\Phi (A,c)=\dfrac{1}{2\pi}\int_0^1A(c'(t))dt
$$
is called the {\it flux of $A$ along $c$}. We remark that the orientation of the loop does not affect the spectrum, hence we do not need to specify it. Summarizing, we have the following fact: 

\begin{prop} In the above notation, we have {\it ${\rm spec}(\Sigma,A)={\rm spec} (\Sigma,0)$ if and only if $A$ is closed and the flux of $A$ around any closed curve is an integer.} 
\end{prop}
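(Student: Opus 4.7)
The plan is to prove the two implications separately: gauge invariance takes care of the ``if'' direction, while the ``only if'' direction is forced by the presence of a zero eigenvalue and a simple polar decomposition of the corresponding eigenfunction.

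For the ``if'' direction, assume $A$ is closed with integer flux around every loop. Fixing a basepoint and defining $f$ as the primitive of $A$ along paths, the hypothesis that every flux lies in $2\pi\mathbb Z$ means the multi-valued ambiguity of $f$ consists only of shifts by $2\pi n$ with $n\in\mathbb Z$, so $e^{if}$ is a globally well-defined smooth function on $\Sigma$ of modulus $1$. The direct computation
$$
d\bigl(e^{-if}u\bigr)=e^{-if}(du-iu\,df)=e^{-if}(du-iuA)=e^{-if}\,d^Au
$$
shows that multiplication by $e^{-if}$ conjugates $d^A$ into $d$, hence $\Delta_A$ into $\Delta_0$, and at the same time converts the magnetic Steklov boundary condition $d^Au(N)=\sigma u$ into the classical one $d(e^{-if}u)(N)=\sigma(e^{-if}u)$. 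Since $|e^{-if}|=1$, the map $u\mapsto e^{-if}u$ is a unitary equivalence on both $L^2(\Sigma)$ and $L^2(\partial\Sigma)$, so by \eqref{minmax} the full Steklov spectra coincide.

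For the ``only if'' direction, suppose ${\rm spec}(\Sigma,A)={\rm spec}(\Sigma,0)$. Since $\sigma_1(\Sigma,0)=0$ with the constants as eigenspace, the hypothesis forces $\sigma_1(\Sigma,A)=0$, and \eqref{minmax} produces a nonzero $u\in H^1(\Sigma)$ with $\int_\Sigma |d^Au|^2\,dv=0$, i.e.\ $du=iuA$ (elliptic regularity makes $u$ smooth). Using that $A$ is real, so that $d\bar u=-i\bar uA$, one computes
$$
d(|u|^2)=\bar u\,du+u\,d\bar u=i|u|^2A-i|u|^2A=0,
$$
so $|u|$ is a positive constant and $u$ never vanishes. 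On any simply connected open subset we may then write $u=|u|e^{i\theta}$ smoothly, and $du=iuA$ reads $A=d\theta$, proving $A$ is closed. Finally, the single-valuedness of the globally defined function $u/|u|=e^{i\theta}$ along any loop $c$ forces the total change of $\theta$ to lie in $2\pi\mathbb Z$, which is precisely the integrality of the flux $\Phi(A,c)$.

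The only genuinely delicate point is the topological bookkeeping that translates integer fluxes into the global single-valuedness of $e^{if}$ (and conversely recovers integer fluxes from the single-valuedness of $e^{i\theta}$). This is a routine consequence of the fact that the flux defines a homomorphism $\pi_1(\Sigma)\to\mathbb R$, which on an annulus is determined by its value on a single generator, so no essential difficulty is expected beyond this standard covering-space argument.
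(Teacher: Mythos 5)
Your argument is correct, but note that the paper itself does not prove this proposition at all: it is attributed to an observation of Shigekawa and a proof in the cited reference [hho2], so there is no internal proof to compare against. What you give is essentially the standard Shigekawa-type argument, and it is sound: the ``if'' direction is the usual gauge transformation by the globally defined unimodular function $e^{if}$ (well defined precisely because the circulations $\oint_c A=2\pi\Phi(A,c)$ lie in $2\pi\mathbb Z$), and the ``only if'' direction correctly extracts from $\sigma_1(\Sigma,A)=0$ a nowhere-vanishing section $u$ with $d^Au=0$, whence $A=d\theta$ locally and the integrality of the fluxes from single-valuedness of $u/|u|$. Two small points deserve tightening. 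First, in the ``only if'' direction the cleanest way to get $\int_\Sigma|d^Au|^2\,dv=0$ is not directly from the min-max infimum being zero (an infimum need not be attained a priori) but from the fact that $0$ is an actual eigenvalue of a discrete spectrum: take the eigenfunction $u$ with $\Delta_Au=0$ and $d^Au(N)=0$ and apply Green's formula, $0=\int_\Sigma\bar u\,\Delta_Au\,dv=\int_\Sigma|d^Au|^2\,dv-\int_{\partial\Sigma}\bar u\,d^Au(N)\,ds$. Second, with the paper's normalization $\Phi(A,c)=\frac{1}{2\pi}\int_0^1A(c'(t))\,dt$, the integrality hypothesis is that the \emph{flux} is an integer, equivalently the circulation $\oint_cA$ lies in $2\pi\mathbb Z$; your phrase ``every flux lies in $2\pi\mathbb Z$'' conflates the two, though the quantity you actually use (the ambiguity of $f$) is the circulation, so the argument is unaffected. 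Your proof also yields slightly more than stated, namely that the single condition $0\in{\rm spec}(\Sigma,A)$ already forces $A$ to be closed with integer fluxes, which is consistent with the paper's later remark that $\sigma_1(\Sigma,\nu)=0$ if and only if $\nu\in\mathbb Z$.
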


This fact has been observed in \cite{shig} and later proved in \cite{hho2}.

\nero In this paper we shall investigate the case when the magnetic field is {\it zero}, i.e., when the potential one-form $A$ is {\it closed}.

\subsection{The magnetic Laplacian with zero magnetic field on an annulus}\label{sub:annulus}

The purpose of this paper is to investigate the magnetic Steklov problem when the one-form $A$ is {\it closed} and $\Sigma$ is an annulus, that is, a Riemannian surface diffeomorphic to
$$
T=[0,1]\times\mathbb S^1.
$$
Then, $\Sigma$ has genus zero and two boundary components. 
The absolute de Rham real cohomology in degree one of an annulus is one-dimensional. Cohomology classes are indexed by the flux of closed one-forms around any of the two boundary components
$\bd_1T=\{0\}\times\mathbb S^1, \bd_2T=\{1\}\times\mathbb S^1$: in fact, these two fluxes are the same because $A$ is closed, by the Stokes theorem.
Let $A, A'$ be  any two closed one-forms with fluxes $\nu,\nu'$ which differ by an integer; if one considers the magnetic Laplacians associated to the pairs $(\Sigma,A)$ and $(\Sigma,A')$,  we see they are unitarily equivalent by the Shigekawa argument discussed in the previous subsection (see \cite{shig}, see also \cite{cps}), and therefore have the same Steklov spectrum.
Then, writing 
$$
\sigma_k(\Sigma,\nu)
$$
we will denote the $k$-th eigenvalue of the Steklov problem associated with {\it any} closed potential $A$ of flux $\nu$, without ambiguity. In particular, note that, if $\nu$ is not an integer, then
$$
\sigma_1(\Sigma,\nu)>0,
$$
while in the classical Steklov case ($\nu=0$) we have of course $\sigma_1(\Sigma,0)=0$ with associated eigenspace  given by the constant functions. We will keep this notation, so that, according to this numbering, the writing $\sigma_2(\Sigma,0)$ will denote the {\it first positive} eigenvalue of the usual non-magnetic Steklov problem.  

\medskip

Note that since $A$ is closed the magnetic field is zero; however, as we have said, if $\nu\notin\mathbb Z$, the ground state energy $\sigma_1(\Sigma,\nu)$ is positive: this phenomenon is vaguely referred, in the physics literature, as the {\it Aharonov-Bohm effect}  (for a discussion of this physical phenomenon, we refer to \cite{AhBo}). We can always assume, by gauge invariance, that $\nu\in [-\frac 12,\frac 12]$; but in fact, since replacing $\nu$ by $-\nu$ the spectrum remains invariant (and the eigenfunctions change to their conjugates), we can assume
$$
\nu\in \left[0,\frac 12	\right].
$$

\begin{defi}[Conformal modulus] Any annulus $\Sigma$  is conformally equivalent to the cylinder 
$$
[-M,M]\times \mathbb S^1,
$$
for a unique $M=M(\Sigma)$,  which is called the {\rm conformal modulus} of $\Sigma$. In other words, any annulus is isometric to $[-M,M]\times \mathbb S^1$ endowed with the metric 
$
g=\psi(t,\theta)(dt^2+d\theta^2),
$
for a smooth $\psi:[-M,M]\times\mathbb S^1$.
\end{defi} 
%{\color{blue}Without loss of generality, we can always assume that the conformal diffeomorphism between an annulus $\Sigma$ of conformal modulus $M$ and $\Gamma_M$ is {\it orientation preserving}.}

\medskip

If the metric of $\Sigma$ is rotationally invariant (i.e., it does not depend on $\theta$), then so does the conformal factor $\psi$. In particular, to any surface of revolution in $\mathbb R^3$ can be associated a metric of type: $\psi(t)(dt^2+d\theta^2)$
on some $[-M,M]\times\mathbb S^1$.

\subsection{Conformal invariance of magnetic Dirichlet energy} \label{conf:inv}

We recall here a fundamental property of the magnetic Dirichlet energy, analogous to the conformal invariance of the usual Dirichlet energy in two dimensions. Let $(\Gamma, A)$ be a magnetic pair (surface and real one-form) and let
$
\phi:\Sigma\to\Gamma
$
be a conformal map of Riemannian surfaces. Then for all $u\in C^{\infty}(\Gamma,\mathbb C)$ one has:
$$
\int_{\Sigma}|d^{\phi^*A}(\phi^*u)|^2\,dv=\int_{\Gamma}|d^{A}u|^2dv
$$
where $\phi^*A$ is as usual the potential one-form on $\Sigma$ given by pull-back of $A$ and $\phi^*u:=u\circ\phi$. For a proof of this result we refer to \cite{cps}.

\subsection{Normalized eigenvalues and $\sigma$-homotheties} Let $\{\sigma_k(\Sigma,\nu)\}_{k=1}^{\infty}$ be the magnetic Steklov spectrum of a Riemannian annulus $\Sigma$ with flux $\nu$. The $k$-th normalized eigenvalue is by definition $\bar\sigma_k(\Sigma,\nu):=|\partial\Sigma|\sigma_k(\Sigma,\nu)$.
It is easy to see that it is invariant by homotheties, exactly as in the non-magnetic case, but even more is true.
\begin{defi}[$\sigma$-homothety]
Given two surfaces with boundary $\Sigma, \Gamma$,
 a {\it $\sigma$-homothety} between them is a conformal diffeomorphism $\phi:\Sigma\to\Gamma$ which restricts to a homothety on $\bd\Sigma$. Equivalently, the map
$$
\phi:(\Sigma, g)\to (\Gamma,h)
$$
($g$ and $h$ being the respective metrics) is a $\sigma$-homothety if and only if
$
 \phi^*h=\psi g,
$
for a smooth function $\psi$ on $\Sigma$, and the conformal factor $\psi$ is constant on the boundary of $\Sigma$. 
\end{defi}
The following result is well-known in the non-magnetic case, but extends also to any flux $\nu$.
 
 \begin{prop}\label{sigmah} If two annuli $\Sigma,\Gamma$ are $\sigma$-homothetic, then they have the same normalized spectrum for all fluxes $\nu$:
$$
\bar\sigma_k(\Sigma,\nu)=\bar\sigma_k(\Gamma,\nu)
$$
for all $k\in\mathbb N$.
 \end{prop}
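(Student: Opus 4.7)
The plan is to reduce the statement to a clean scaling computation using two ingredients already at our disposal: the conformal invariance of the magnetic Dirichlet energy recalled in Subsection \ref{conf:inv}, and the fact that along a $\sigma$-homothety the conformal factor is \emph{constant} on the boundary. Concretely, I would let $\phi:(\Sigma,g)\to(\Gamma,h)$ be a $\sigma$-homothety with $\phi^*h=\psi g$ and $\psi\equiv c_0$ on $\partial\Sigma$, pick any smooth closed one-form $A$ on $\Gamma$ of flux $\nu$, and work with its pullback $B:=\phi^*A$ on $\Sigma$. Since $dB=\phi^*dA=0$, $B$ is closed; and because $\phi$ is a diffeomorphism of annuli, it sends a generator of $H_1(\Sigma)$ to (plus or minus) a generator of $H_1(\Gamma)$, so $B$ has flux $\pm\nu$. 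The magnetic spectrum is invariant under $\nu\mapsto-\nu$, so $B$ represents the flux $\nu$ on $\Sigma$ in the sense of Subsection \ref{sub:annulus}, and we may use it to compute $\sigma_k(\Sigma,\nu)$.

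Next I would transform both pieces of the Rayleigh quotient under the bijection $u\mapsto\phi^*u=u\circ\phi$ of $H^1(\Gamma)$ onto $H^1(\Sigma)$. For the energy, the conformal invariance of the magnetic Dirichlet integral gives
$$
\int_{\Gamma}|d^Au|^2\,dv_h=\int_{\Sigma}|d^B(u\circ\phi)|^2\,dv_g.
$$
For the boundary integrals, note that $(\phi|_{\partial\Sigma})^*h|_{\partial\Gamma}=(\phi^*h)|_{\partial\Sigma}=c_0\,g|_{\partial\Sigma}$, so the induced arc length pulls back as $\phi^*ds_h=\sqrt{c_0}\,ds_g$, whence
$$
\int_{\partial\Gamma}|u|^2\,ds_h=\sqrt{c_0}\int_{\partial\Sigma}|u\circ\phi|^2\,ds_g.
$$

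Combining the two displays, the Rayleigh quotient scales by the simple factor $c_0^{-1/2}$: for every $0\neq u\in H^1(\Gamma)$,
$$
\frac{\int_{\Gamma}|d^Au|^2\,dv_h}{\int_{\partial\Gamma}|u|^2\,ds_h}=\frac{1}{\sqrt{c_0}}\cdot\frac{\int_{\Sigma}|d^B(u\circ\phi)|^2\,dv_g}{\int_{\partial\Sigma}|u\circ\phi|^2\,ds_g}.
$$
Plugging this into the min--max characterization \eqref{minmax} yields $\sigma_k(\Gamma,\nu)=c_0^{-1/2}\sigma_k(\Sigma,\nu)$ for every $k$. Since $|\partial\Gamma|_h=\sqrt{c_0}\,|\partial\Sigma|_g$ by the same boundary rescaling, the two factors cancel and
$$
\bar\sigma_k(\Gamma,\nu)=|\partial\Gamma|_h\,\sigma_k(\Gamma,\nu)=|\partial\Sigma|_g\,\sigma_k(\Sigma,\nu)=\bar\sigma_k(\Sigma,\nu).
$$

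The only nontrivial point is the preservation of the flux under pullback: this is where one uses that $\phi$ is a \emph{diffeomorphism} between annuli (so it maps a generating loop of $H_1$ to a generating loop) together with the $\nu\mapsto-\nu$ symmetry of the spectrum to absorb a possible orientation reversal. Apart from that, the proof is essentially a bookkeeping exercise once conformal invariance of the magnetic energy and the constancy of $\psi$ on the boundary are in hand.
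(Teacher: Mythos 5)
Your argument is correct, and it takes a genuinely different route from the paper's. The paper proves the proposition in "strong form": it pulls back an eigenfunction $u$ of $(\Gamma,A)$ and uses the conformal transformation law of the magnetic Laplacian ($\Delta_{\phi^*A,g}(\phi^*u)=\psi\,\phi^*(\Delta_{A,h}u)$) together with the scaling of the unit normal ($d\phi(N_\Sigma)=\sqrt{\psi}\,N_\Gamma$) to check directly that $\phi^*u$ satisfies the Steklov boundary condition with eigenvalue $\sqrt{c_0}\,\sigma_k$, the constancy of $\psi$ on $\partial\Sigma$ being what keeps the boundary condition of Steklov type. You instead argue in "weak form" through the min--max characterization \eqref{minmax}: the conformal invariance of the magnetic energy recalled in Subsection \ref{conf:inv} handles the numerator, the boundary rescaling $\phi^*ds_h=\sqrt{c_0}\,ds_g$ handles the denominator, and the bijection $u\mapsto u\circ\phi$ of $H^1$ spaces transports $k$-dimensional test subspaces, giving $\sigma_k(\Gamma,\nu)=c_0^{-1/2}\sigma_k(\Sigma,\nu)$ for all $k$ at once. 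Your treatment of the flux (closedness of $\phi^*A$, possible sign flip absorbed by the $\nu\mapsto-\nu$ symmetry) matches the closing step of the paper's proof. What each approach buys: yours is more economical, needing neither the transformation law of $\Delta_A$ nor the normal-derivative computation, only the already-stated invariance of the quadratic form; the paper's version yields as a by-product an explicit isomorphism of eigenspaces under $\phi^*$, which is convenient later when the authors identify eigenfunctions of maximisers, though this correspondence can also be recovered a posteriori from your variational identity.
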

The proof is presented in Appendix \ref{app:2}.

 \subsection{Rotationally invariant annuli} Let $\Sigma$ be a rotationally invariant annulus with conformal modulus $M$: then, it is isometric with $[-M,M]\times\mathbb S^1$ endowed with the metric 
$
\psi(t)(dt^2+d\theta^2),
$
where now $\psi:[-M,M]\to (0,\infty)$. 
In addition, we say that $\Sigma$ is {\it symmetric} if $\psi(-M)=\psi(M)$ (this is true, in particular, if $\psi$ is an even function: $\psi(t)=\psi(-t)$).  This implies that the two boundary components have the same length, 
which guarantees that $\Sigma$ is $\sigma$-homothetic with the cylinder $[-M,M]\times\mathbb S^1$. We remark that any surface in $\mathbb R^3$ obtained by rotating around the $z$-axis  the graph of $x=\rho(z)$ on  $[-M,M]$, with $\rho$ even, is a symmetric annulus. 

\medskip

From Proposition \ref{sigmah} we deduce the following:

\begin{prop}\label{RIS-sigmah} Any rotationally invariant, symmetric annulus of conformal modulus $M$ has the same normalized spectrum of the cylinder $[-M,M]\times \mathbb S^1$.
\end{prop}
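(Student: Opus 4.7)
The plan is to reduce the statement to Proposition \ref{sigmah}: I will show that any rotationally invariant, symmetric annulus $\Sigma$ of conformal modulus $M$ is $\sigma$-homothetic to the flat cylinder $C_M:=([-M,M]\times\mathbb{S}^1,\,h)$, where $h=dt^2+d\theta^2$ is the Euclidean product metric.

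By the definition of conformal modulus together with rotational invariance, $\Sigma$ is isometric to $([-M,M]\times\mathbb{S}^1,\,g)$ with $g=\psi(t)(dt^2+d\theta^2)=\psi(t)\,h$ for some smooth positive $\psi$, and the symmetry hypothesis reads $\psi(-M)=\psi(M)$. I will take for the required $\sigma$-homothety the identity map $\phi:(\Sigma,g)\to(C_M,h)$, $\phi(t,\theta)=(t,\theta)$. Clearly $\phi$ is a diffeomorphism and $\phi^{\ast}h=h=\psi(t)^{-1}g$, so $\phi$ is conformal with conformal factor $\psi(t)^{-1}$. Restricted to the boundary circles $\{\pm M\}\times\mathbb{S}^1$ this factor equals $\psi(\pm M)^{-1}$, and the two values coincide by symmetry; hence the conformal factor is a single constant on all of $\partial\Sigma$, which is exactly the requirement in the definition of $\sigma$-homothety.

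Applying Proposition \ref{sigmah} to $\phi$ then yields $\bar\sigma_k(\Sigma,\nu)=\bar\sigma_k(C_M,\nu)$ for every $k\in\mathbb{N}$ and every flux $\nu\in\mathbb{R}$, which is the claim. There is essentially no technical obstacle here: the notion of $\sigma$-homothety was introduced precisely to accommodate a reduction of this type, and the sole role of the symmetry assumption $\psi(-M)=\psi(M)$ is to turn the a priori piecewise-constant conformal factor on $\partial\Sigma$ into a genuine constant.
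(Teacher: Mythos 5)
Your proposal is correct and is essentially the argument the paper itself uses (the paper deduces the proposition from Proposition \ref{sigmah} after noting that the symmetry condition $\psi(-M)=\psi(M)$ makes the identity map a $\sigma$-homothety onto the flat cylinder). Your explicit observation that symmetry is needed precisely to merge the two a priori different constant values of the conformal factor on the two boundary circles is exactly the point.
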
 

%%%
\section{Normalized spectrum of rotationally invariant, symmetric annuli}\label{spec:rot}

In this section we compute explicitly the magnetic Steklov spectrum with zero magnetic field of rotationally invariant, symmetric annuli for closed potential one-forms. 

\subsection{Flat annuli}

A {\it  flat annulus} is the product $\Gamma_M=[-M,M]\times\mathbb S^1$ with coordinates $(t,\theta)\in[-M,M]\times\mathbb S^1$, endowed with the flat metric $dt^2+d\theta^2$. We take as magnetic potential the harmonic one-form $A=\nu d\theta$ which has flux $\nu$ around $\mathbb S^1$. We can think of $\Gamma_M$ as a cylinder in $\mathbb R^3$, and in this case the one-form $A$ can be expressed in Cartesian coordinates as $A=\frac{\nu}{x^2+y^2}(-ydx+xdy)$ (and is defined on the whole $\mathbb R^3$). Since $A$ is harmonic (hence co-closed) the magnetic Laplacian is given by:
$$
\Delta_Au=\Delta u+\abs{A}^2 u+2i\langle du, A\rangle.
$$
We now look for solutions of $\Delta_Au=0$; as customary we separate variables, and consider functions of the form $u(t,\theta)=\phi(t)e^{ik\theta}$, $k\in\mathbb Z$. Since the metric is flat we have
$$
\Delta u=\left(-\phi''(t)+k^2\phi(t)\right)e^{ik\theta}.
$$
Moreover, $|A|^2=\nu^2$ and $du=e^{ik\theta}(\phi'(t)dt+ik\phi(t)d\theta)$, so that
$$
\langle du, A\rangle=i\nu k \phi(t) e^{ik\theta}.
$$
It follows that
$$
\Delta_Au=\left(-\phi''+(k-\nu)^2\phi(t)\right)e^{ik\theta}.
$$
So, if we want $\Delta_Au=0$, we must solve the ODE  $\phi''=(k-\nu)^2\phi$ on the interval $[-M,M]$. The general solution is
$$
\phi(t)=a\cosh(\abs{k-\nu}t)+b\sinh(\abs{k-\nu}t)\,,\ \ \ a,b\in\mathbb R,
$$
when $k\ne\nu$. If $\nu$ is an integer and $k=\nu$ we have $\phi(t)=a+bt$, $a,b\in\mathbb R$. Since the case $\nu\in\mathbb Z$ is exactly the classical Steklov problem, for which the spectrum on cylinders has been extensively described (see e.g., \cite{gipo_sur}), we shall assume from now on (unless explicitly stated) that $\nu\notin\mathbb Z$. In the following subsection we consider separately the linearly independent solutions $\cosh(|k-\nu|t)$ (solutions of first type) and $\sinh(|k-\nu|t)$ (solutions of second type).

\medskip

{\bf Solutions of the first type.}
The first type of solution is $u(t,\theta)=\cosh(\abs{k-\nu}t)e^{ik\theta}$. Note that $N=\partial_t$ on $t=M$ and $N=-\partial_t$ on $t=-M$, so that we easily compute
$$
\frac{\partial u}{\partial N}=|k-\nu|\tanh(|k-\nu|M)u.
$$
Hence $u$ is a Steklov eigenfunction associated to
$$
\sigma=\abs{k-\nu}\tanh(\abs{k-\nu}M).
$$

\medskip

{\bf Solutions of the second type.} We now take $u(t,\theta)=\sinh(\abs{k-\nu}t)e^{ik\theta}$.
Then:
$$
\frac{\partial u}{\partial N}=|k-\nu|\coth(|k-\nu|M)u
$$
and $u$ is a Steklov eigenfunction associated to 
$$
\sigma=|k-\nu|\coth(|k-\nu|M).
$$
A standard argument allows to prove that these solutions are all the magnetic Steklov eigenfunctions on $\Gamma_M$ (see \cite{cps} for a proof of this fact in the case of a disk, see also \cite{gipo_sur}). We have the following:

\begin{thm} The Steklov spectrum of the magnetic Laplacian on the flat cylinder $\Gamma_M=[-M,M]\times\mathbb S^1$ when  $\nu\notin \mathbb Z$ is given by the collection of all $\sigma_{1k}, \sigma_{2k}$ where
$$
\sigma_{1k}=\abs{k-\nu}\tanh(\abs{k-\nu}M), \quad \sigma_{2k}=\abs{k-\nu}\coth(\abs{k-\nu}M), \quad k\in\mathbb Z.
$$
\end{thm}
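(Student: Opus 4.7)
The plan is to prove completeness via Fourier decomposition in the angular variable $\theta$. The preceding computation already exhibits the functions $\cosh(|k-\nu|t)e^{ik\theta}$ and $\sinh(|k-\nu|t)e^{ik\theta}$ as magnetic Steklov eigenfunctions with eigenvalues $\sigma_{1k}$ and $\sigma_{2k}$ respectively, so those numbers certainly lie in the spectrum; the content of the theorem is that nothing else does.

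First, I would exploit the rotational symmetry: the metric $dt^2 + d\theta^2$, the potential $A = \nu d\theta$, and the boundary are all invariant under the circle action $\theta \mapsto \theta + \theta_0$, so the rotation group commutes with $\Delta_A$, with the magnetic Neumann operator $d^A u(N)$, and with the trace to $\partial\Gamma_M$. Hence each eigenspace of the Steklov problem splits as an orthogonal sum of isotypic components $\{\phi(t)e^{ik\theta} : \phi \in H^1([-M,M])\}$ for $k \in \mathbb{Z}$. Equivalently, writing an arbitrary $u \in H^1(\Gamma_M,\mathbb{C})$ as $u(t,\theta) = \sum_{k \in \mathbb{Z}} \phi_k(t) e^{ik\theta}$, the magnetic energy $\mathcal{Q}$ and the boundary $L^2$-form diagonalise:
\[
\mathcal{Q}(u) = \sum_{k \in \mathbb{Z}} 2\pi \int_{-M}^{M} \bigl(|\phi_k'|^2 + (k-\nu)^2 |\phi_k|^2\bigr)\,dt, \qquad \int_{\partial\Gamma_M} |u|^2 = \sum_{k \in \mathbb{Z}} 2\pi (|\phi_k(M)|^2 + |\phi_k(-M)|^2),
\]
so the global min-max \eqref{minmax} is the superposition of the one-dimensional Steklov problems on each mode.

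Next, on the $k$-th Fourier mode $\Delta_A u = 0$ reduces to the ODE $\phi_k'' = (k-\nu)^2 \phi_k$ on $[-M,M]$. Because $\nu \notin \mathbb{Z}$ we have $k - \nu \neq 0$ for every $k$, so the solution space is two-dimensional and spanned by the even function $c(t) = \cosh(|k-\nu|t)$ and the odd function $s(t) = \sinh(|k-\nu|t)$. Writing $\phi_k = a\,c + b\,s$ and imposing the Steklov boundary conditions at $t = \pm M$, namely $\phi_k'(M) = \sigma \phi_k(M)$ and $-\phi_k'(-M) = \sigma \phi_k(-M)$, the two equations decouple into the even and odd parts by parity: the coefficient $a$ contributes $|k-\nu|\tanh(|k-\nu|M) - \sigma = 0$, and the coefficient $b$ contributes $|k-\nu|\coth(|k-\nu|M) - \sigma = 0$. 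A nontrivial solution therefore forces $\sigma = \sigma_{1k}$ or $\sigma = \sigma_{2k}$.

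Finally, combining these two steps: any Steklov eigenfunction projects into Fourier modes, each nonzero projection must be one of the two families identified above, and conversely the collection $\{\sigma_{1k},\sigma_{2k}\}_{k \in \mathbb{Z}}$ exhausts the spectrum. The main technical point to verify is the diagonalisation claim — that the magnetic energy does not mix distinct Fourier modes — which follows immediately from $A = \nu d\theta$ having neither a $dt$-component nor any $\theta$-dependence, so that cross terms in $\langle d^A u_j, d^A u_k\rangle$ and in the boundary integral vanish after integration in $\theta$. Everything else is parity analysis of the explicit ODE solutions.
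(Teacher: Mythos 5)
Your proof is correct and follows essentially the same route as the paper: separation of variables in $\theta$, reduction to the ODE $\phi''=(k-\nu)^2\phi$, and identification of the two branches via the boundary conditions at $t=\pm M$. The only difference is that you spell out the completeness step (diagonalisation of the quadratic forms over Fourier modes and the parity decoupling of the boundary conditions), which the paper dispatches by citing a ``standard argument'' from the references; your version of that step is sound.
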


When $\nu=0$ (or, more in general, when $\nu\in\mathbb Z$) we know that the spectrum is given by
$$
0, \frac 1M, k\tanh(kM), k\coth(kM), \quad k=0,1,2,\dots
$$

\subsection{Normalized spectrum of rotationally invariant, symmetric annuli}

Throughout this section, we denote by $\Sigma(M)$ any rotationally invariant symmetric annulus having conformal modulus $M$ (an example is the flat annulus $\Gamma_M$ discussed in the previous subsection). 

\medskip

Since the boundary of $\Gamma_M$ consists of two circles of radius one, and since any rotationally invariant symmetric annulus of conformal modulus $M$ has the normalized spectrum of $\Gamma_M$ (see Proposition \ref{RIS-sigmah}), we have the following description of the normalized spectrum: note that it depends only on $M$ and $\nu$. 
\begin{thm} Let $\Sigma(M)$ be a rotationally invariant, symmetric annulus of conformal modulus $M$. Then its normalized spectrum $\{\bar\sigma_k(\Sigma(M),\nu)\}_{k\in\mathbb N}$ is given by the collection of all $\sigma_{1k}, \sigma_{2k}$ where:
$$
\bar\sigma_{1k}=4\pi\abs{k-\nu}\tanh(\abs{k-\nu}M), \quad \bar\sigma_{2k}=4\pi\abs{k-\nu}\coth(\abs{k-\nu}M), \quad k\in\mathbb Z.
$$
If $\nu=0$, we have $\bar\sigma_{10}=0$, $\bar\sigma_{20}=\frac{4\pi}{M}$.
\end{thm}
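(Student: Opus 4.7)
The plan is to deduce this theorem immediately from the preceding computation of the spectrum of $\Gamma_M$ combined with Proposition \ref{RIS-sigmah}. First, I would apply Proposition \ref{RIS-sigmah}: since $\Sigma(M)$ is a rotationally invariant, symmetric annulus of conformal modulus $M$, it is $\sigma$-homothetic to the flat cylinder $\Gamma_M=[-M,M]\times \mathbb{S}^1$, so the normalized spectra agree identically:
$$
\bar\sigma_k(\Sigma(M),\nu)=\bar\sigma_k(\Gamma_M,\nu)\qquad\text{for every }k\in\mathbb N.
$$

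Next I would compute $|\partial \Gamma_M|$ in the flat metric $dt^2+d\theta^2$: the boundary consists of the two circles $\{-M\}\times\mathbb S^1$ and $\{M\}\times\mathbb S^1$, each of length $2\pi$, so $|\partial \Gamma_M|=4\pi$. Multiplying each eigenvalue from the previous theorem by $4\pi$ produces the announced formulas
$$
\bar\sigma_{1k}=4\pi\abs{k-\nu}\tanh(\abs{k-\nu}M),\qquad \bar\sigma_{2k}=4\pi\abs{k-\nu}\coth(\abs{k-\nu}M),\qquad k\in\mathbb Z,
$$
valid whenever $\nu\notin\mathbb Z$ (and by periodicity in $\nu$, for the relevant range).

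For the special case $\nu=0$ (equivalently $\nu\in\mathbb Z$) the index $k=0$ must be treated separately, because in the previous section the ODE $\phi''=(k-\nu)^2\phi$ degenerates to $\phi''=0$, with general solution $\phi(t)=a+bt$. The constant branch $\phi\equiv 1$ yields the zero eigenvalue, giving $\bar\sigma_{10}=0$, while the branch $\phi(t)=t$ satisfies $\partial_N u=\tfrac{1}{M}u$ on both boundary components $\{\pm M\}\times\mathbb S^1$, giving the eigenvalue $\sigma=1/M$ and hence $\bar\sigma_{20}=4\pi/M$ after normalization. For all other values of $k$ the general formulas apply unchanged.

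Since every ingredient is already in place, there is really no significant obstacle: the only subtlety is keeping track of the degeneracy at $k=\nu\in\mathbb Z$, and verifying that the boundary length used for normalization is indeed $4\pi$ on $\Gamma_M$, so that the conformal-invariance argument underlying Proposition \ref{RIS-sigmah} delivers the stated formulas irrespective of the particular conformal factor $\psi(t)$ defining $\Sigma(M)$.
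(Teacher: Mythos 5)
Your proposal is correct and follows exactly the paper's own (very brief) argument: invoke Proposition \ref{RIS-sigmah} to reduce to the flat cylinder $\Gamma_M$, note that $|\partial\Gamma_M|=4\pi$, and multiply the eigenvalues from the preceding theorem by $4\pi$, handling the degenerate branch $k=\nu\in\mathbb Z$ via the linear solutions $a+bt$. Nothing is missing.
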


 We now discuss the magnetic case: hence we can restrict $\nu$ to the interval $(0,\frac 12]$.  We identify the first and the second eigenvalue as follows. 

\begin{prop}[First eigenvalue] Assume $\nu\in (0,\frac 12]$. The first normalized eigenvalue of a rotationally invariant, symmetric annulus $\Sigma(M)$ of conformal modulus $M$ is
$$
\bar\sigma_1(\Sigma(M),\nu)=\bar\sigma_{10}=4\pi\nu\tanh(\nu M).
$$
\end{prop}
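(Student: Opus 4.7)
The plan is to reduce the claim to two elementary monotonicity observations applied to the explicit spectrum formula just obtained. Since the normalized spectrum of $\Sigma(M)$ consists of the families $\bar\sigma_{1k}=4\pi|k-\nu|\tanh(|k-\nu|M)$ and $\bar\sigma_{2k}=4\pi|k-\nu|\coth(|k-\nu|M)$ for $k\in\mathbb Z$, I only need to identify the smallest of these numbers when $\nu\in(0,\tfrac12]$.

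First I would eliminate the second-type eigenvalues from the competition. For any $x>0$ one has $\tanh(x)<\coth(x)$, hence $\bar\sigma_{1k}<\bar\sigma_{2k}$ for every $k\in\mathbb Z$ (note that since $\nu\notin\mathbb Z$, $|k-\nu|>0$ for all $k$). Therefore
\[
\bar\sigma_1(\Sigma(M),\nu)=\min_{k\in\mathbb Z}\bar\sigma_{1k}=4\pi\min_{k\in\mathbb Z}|k-\nu|\tanh(|k-\nu|M).
\]

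Next I would study the function $\varphi(x):=x\tanh(xM)$ on $[0,\infty)$. Its derivative is $\varphi'(x)=\tanh(xM)+xM\,\mathrm{sech}^2(xM)$, which is strictly positive for $x>0$; hence $\varphi$ is strictly increasing on $[0,\infty)$. Consequently the minimum of $\varphi(|k-\nu|)$ over $k\in\mathbb Z$ is attained at the integer $k$ minimizing $|k-\nu|$. Because $\nu\in(0,\tfrac12]$, for $k=0$ we have $|k-\nu|=\nu\leq\tfrac12$, whereas for $k=1$ we have $|k-\nu|=1-\nu\geq\tfrac12$, and for every $k\in\mathbb Z\setminus\{0,1\}$ one has $|k-\nu|\geq 1+\nu$ if $k\leq-1$ and $|k-\nu|\geq 2-\nu$ if $k\geq 2$, both strictly larger than $\nu$. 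Thus $k=0$ is the (unique, when $\nu<\tfrac12$) minimizer.

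Combining these two observations yields
\[
\bar\sigma_1(\Sigma(M),\nu)=4\pi\varphi(\nu)=4\pi\nu\tanh(\nu M)=\bar\sigma_{10},
\]
as claimed. There is no real obstacle here: the statement is essentially a direct consequence of the explicit diagonalization of the magnetic Laplacian on $\Gamma_M$ proved in the preceding theorem, together with the strict monotonicity of $x\mapsto x\tanh(xM)$ and the elementary inequality $\tanh<\coth$. The only point worth noting for clarity is that the restriction $\nu\in(0,\tfrac12]$ is precisely what makes $k=0$ rather than $k=1$ the minimizing integer; at $\nu=\tfrac12$ both choices coincide and give $\bar\sigma_{10}=\bar\sigma_{11}=2\pi\tanh(M/2)$, consistent with a double ground state at the boundary of the admissible flux interval.
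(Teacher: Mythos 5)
Your proof is correct and follows exactly the route the paper intends: the proposition is stated as an immediate consequence of the explicit list $\{\bar\sigma_{1k},\bar\sigma_{2k}\}_{k\in\mathbb Z}$, and your two observations (the strict monotonicity of $x\mapsto x\tanh(xM)$ together with $\tanh<\coth$, and the fact that $k=0$ minimizes $|k-\nu|$ for $\nu\in(0,\tfrac12]$) are precisely the elementary facts needed to extract the minimum. Your closing remark about the double ground state at $\nu=\tfrac12$ also matches the paper's subsequent discussion.
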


Again note that when $\nu=0$, $\bar\sigma_1(\Sigma(M),0)=0$, with corresponding constant eigenfunctions.
When $\Sigma=[-M,M]\times \mathbb S^1$ the associated eigenfunction is $u(t,\theta)=\cosh(\nu t)$, which is real and radial, in the sense that it does not depend on the angular variable $\theta$; then (see the proof of Proposition 10) the same happens for any rotationally invariant, symmetric annulus $\Sigma(M)$.

\medskip

Now we turn to the second eigenvalue.  There are two competing branches:
\begin{equation}\label{branches}
\bar\sigma_{11}(\Sigma(M),\nu)=4\pi(1-\nu)\tanh((1-\nu)M), \quad \bar\sigma_{20}(\Sigma(M),\nu)=4\pi\nu\coth(\nu  M)
\end{equation}
 so that we have:
 
\begin{prop}[Second eigenvalue] \item a) Fix $\nu\in (0,\frac 12)$. The second normalized eigenvalue of a rotationally invariant, symmetric annulus $\Sigma(M)$ of conformal modulus $M$ is
$$
\bar\sigma_2(\Sigma(M),\nu)=\min\{\bar\sigma_{11},\bar\sigma_{20}\}=\min\{4\pi(1-\nu)\tanh((1-\nu) M),4\pi\nu\coth(\nu M)\}.
$$
\item b) If $\nu=\frac 12$ then:
$$
\bar\sigma_2(\Sigma(M),\frac 12)=2\pi\tanh(\frac M2).
$$
In that case $\bar\sigma_1=\bar\sigma_2$ has multiplicity $2$ for all values of the conformal modulus $M$. 
\end{prop}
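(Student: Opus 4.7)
The plan is to use the explicit enumeration of the normalized spectrum given in the preceding theorem, together with Proposition~\ref{RIS-sigmah}, to reduce the question to an elementary ordering of the real numbers $\bar\sigma_{1k}=4\pi|k-\nu|\tanh(|k-\nu|M)$ and $\bar\sigma_{2k}=4\pi|k-\nu|\coth(|k-\nu|M)$, $k\in\mathbb{Z}$. I would set up two monotonicity observations that control all comparisons: (i) $\tanh(y)<\coth(y)$ for every $y>0$, whence $\bar\sigma_{1k}<\bar\sigma_{2k}$ at each level $k$; (ii) the maps $x\mapsto x\tanh(xM)$ and $x\mapsto x\coth(xM)$ are strictly increasing on $(0,+\infty)$. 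The first is immediate, and the second reduces to a short calculus check: for the $\coth$-map the derivative has numerator proportional to $\sinh(xM)\cosh(xM)-xM$, which is positive since $\sinh(2y)>2y$ for $y>0$. With these in hand, both sequences $\bar\sigma_{1k}$ and $\bar\sigma_{2k}$ follow the same ordering as $|k-\nu|$.

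For part (a) with $\nu\in(0,\tfrac{1}{2})$, the values of $|k-\nu|$ for $k\in\mathbb{Z}$, listed in strictly increasing order, are $\nu<1-\nu<1+\nu<2-\nu<\cdots$. So the smallest eigenvalue is $\bar\sigma_{10}=4\pi\nu\tanh(\nu M)$, consistently with the first eigenvalue proposition. Any candidate for $\bar\sigma_2$ other than $\bar\sigma_{10}$ comes either from level $k=0$ (giving $\bar\sigma_{20}$) or from a level with $|k-\nu|\geq 1-\nu$. By (i), $\bar\sigma_{21}>\bar\sigma_{11}$; and for levels with $|k-\nu|\geq 1+\nu>1-\nu$, both $\bar\sigma_{1k}$ and $\bar\sigma_{2k}$ strictly exceed $\bar\sigma_{11}$ by (ii). Hence the only surviving candidates are $\bar\sigma_{20}$ and $\bar\sigma_{11}$, giving $\bar\sigma_2=\min\{\bar\sigma_{20},\bar\sigma_{11}\}$ as claimed.

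For part (b) at $\nu=\tfrac{1}{2}$, the indices $k=0$ and $k=1$ both satisfy $|k-\nu|=\tfrac{1}{2}$, so $\bar\sigma_{10}=\bar\sigma_{11}=4\pi\cdot\tfrac{1}{2}\tanh(M/2)=2\pi\tanh(M/2)$. Every remaining eigenvalue is either of the form $\bar\sigma_{2k}$ at the same level (strictly larger by (i)), or comes from $|k-\nu|\geq\tfrac{3}{2}$ (strictly larger by (ii)). Thus $\bar\sigma_1=\bar\sigma_2=2\pi\tanh(M/2)$ with multiplicity two, for every value of $M$.

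The main (mild) obstacle is only the verification of strict monotonicity of $x\mapsto x\coth(xM)$ in step (ii); everything else amounts to direct comparisons of positive monotone functions indexed by $|k-\nu|$, together with the ordering of $\{|k-\nu|:k\in\mathbb{Z}\}$ which is where the hypothesis $\nu\leq\tfrac12$ (and its boundary case $\nu=\tfrac12$) enters.
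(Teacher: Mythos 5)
Your proof is correct and follows essentially the same route the paper intends: the proposition is presented there as an immediate consequence of the explicit enumeration of the normalized spectrum, with the branch comparison left implicit, and your monotonicity observations (i)–(ii) together with the ordering $\nu<1-\nu<1+\nu<2-\nu<\cdots$ supply exactly the missing elementary details. Nothing further is needed.
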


We now fix $\nu$ and discuss the supremum of $\bar\sigma_2$ as a function of the conformal modulus $M$, that is, we set:
$$
\sigma_2^*(\nu):=\sup_{M\in (0,+\infty)}\bar\sigma_2(\Sigma(M),\nu)
$$
and, for $\nu\in (0,\frac 12)$, we let $M^*(\nu)$ be the unique positive root of the equation $(1-\nu)\tanh((1-\nu)x)=\nu\coth(\nu x)$. 

\begin{prop}\label{max_RIS} Let $\Sigma(M)$ be a rotationally invariant, symmetric annulus of conformal modulus $M$. 

\item (a) Fix $\nu\in (0,\frac 12)$. Then 
$
\sigma_2^*(\nu)
$
is attained for the unique value $M=M^*(\nu)$; that is 
$$
\sigma_2^*(\nu)=4\pi(1-\nu)\tanh((1-\nu)M^*(\nu))=4\pi\nu\coth( \nu M^*(\nu)).
$$
For this value $M=M^*(\nu)$ the second eigenvalue has multiplicity $2$, and we have $\bar\sigma_1<\bar\sigma_2=\bar\sigma_3$. 

\item (b) Fix $\nu=\frac 12$. Then $\sigma^*_2(\frac 12)$ is not attained for any finite value of $M$. In fact $\sigma^*_2(\frac 12)=2\pi$, and is reached asymptotically as $M\to+\infty$. Moreover, for any finite value of $M$ one has $\bar\sigma_1=\bar\sigma_2<\bar\sigma_3$.
\end{prop}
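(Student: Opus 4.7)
The plan is to exploit the explicit list of eigenvalues obtained in the preceding proposition and perform a monotonicity analysis in $M$. Set
\[
f(M):=4\pi(1-\nu)\tanh((1-\nu)M),\qquad g(M):=4\pi\nu\coth(\nu M),
\]
so that, by the previous proposition, $\bar\sigma_2(\Sigma(M),\nu)=\min\{f(M),g(M)\}$ for $\nu\in(0,\frac12)$. The function $f$ is strictly increasing from $f(0^+)=0$ to $f(+\infty)=4\pi(1-\nu)$, while $g$ is strictly decreasing from $g(0^+)=+\infty$ to $g(+\infty)=4\pi\nu$. Since $\nu\in(0,\frac12)$ gives $4\pi\nu<4\pi(1-\nu)$, by continuity the graphs cross at exactly one point $M^*(\nu)\in(0,+\infty)$, which is the unique positive root of $(1-\nu)\tanh((1-\nu)x)=\nu\coth(\nu x)$.

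For part (a), I would observe the elementary fact that the supremum of the pointwise minimum of a strictly increasing and a strictly decreasing continuous function is attained exactly at their intersection: for $M<M^*(\nu)$ one has $\min\{f,g\}=f<f(M^*)$, for $M>M^*(\nu)$ one has $\min\{f,g\}=g<g(M^*)$, and at $M^*(\nu)$ both are equal. Hence $\sigma_2^*(\nu)=f(M^*(\nu))=g(M^*(\nu))$. For the multiplicity claim at $M=M^*(\nu)$, I would note that the two branches realising $\bar\sigma_2$ are $\bar\sigma_{11}=f$ and $\bar\sigma_{20}=g$, and at $M^*(\nu)$ both contribute. To ensure the multiplicity is exactly $2$ (and not more), I would check by a short case analysis that, for $\nu\in(0,\frac12)$, the minimum of $\{|k-\nu|:k\in\mathbb Z\}$ is $\nu$ attained only at $k=0$, and the second smallest is $1-\nu$ attained only at $k=1$; all other $|k-\nu|$ are at least $1+\nu$, so the corresponding $\bar\sigma_{1k},\bar\sigma_{2k}$ are strictly larger than $\min\{f(M^*),g(M^*)\}$. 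Finally $\bar\sigma_1=\bar\sigma_{10}=4\pi\nu\tanh(\nu M^*)<4\pi\nu\coth(\nu M^*)=\sigma_2^*(\nu)$ since $\tanh<\coth$ on $(0,\infty)$, so indeed $\bar\sigma_1<\bar\sigma_2=\bar\sigma_3$.

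For part (b), with $\nu=\frac12$, the values $|k-\nu|$ for $k=0$ and $k=1$ both equal $\frac12$, so the two first-type branches $\bar\sigma_{10}$ and $\bar\sigma_{11}$ collapse into the single value $2\pi\tanh(M/2)$. This immediately gives $\bar\sigma_1=\bar\sigma_2=2\pi\tanh(M/2)$ with multiplicity $2$ for every $M>0$. Since $M\mapsto 2\pi\tanh(M/2)$ is strictly increasing with horizontal asymptote $2\pi$, the supremum $\sigma_2^*(\tfrac12)=2\pi$ is not attained at any finite $M$. To check $\bar\sigma_2<\bar\sigma_3$, I would identify the next candidates: the second-type branches $\bar\sigma_{20}=\bar\sigma_{21}=2\pi\coth(M/2)$ and the first-type branch with $k=-1$, namely $6\pi\tanh(3M/2)$; both are strictly greater than $2\pi\tanh(M/2)$ for every $M>0$, hence so is $\bar\sigma_3$.

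There is no real obstacle in this proposition beyond careful bookkeeping: the monotonicity analysis is elementary, and the only delicate point is the multiplicity count, which requires ruling out accidental coincidences between branches indexed by different integers $k$. That amounts to observing that, for $\nu\in(0,\frac12)$, the function $k\mapsto|k-\nu|$ attains its first two minima only at $k=0$ and $k=1$ with a clear gap to the remaining values, which keeps the argument short.
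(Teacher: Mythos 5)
Your proof is correct and follows essentially the same route as the paper: the paper's own argument is exactly the observation that the branch $4\pi(1-\nu)\tanh((1-\nu)M)$ increases from $0$ to $4\pi(1-\nu)$ while $4\pi\nu\coth(\nu M)$ decreases from $+\infty$ to $4\pi\nu$, so the two graphs cross at the unique $M^*(\nu)$, which maximises their pointwise minimum. Your additional bookkeeping on the multiplicity (ruling out the branches with $\lvert k-\nu\rvert\geq 1+\nu$ via monotonicity in $\lvert k-\nu\rvert$) is a correct and slightly more explicit version of what the paper leaves implicit in its identification of the second eigenvalue.
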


For the proof of (a), just observe that, as $M$ increases, the first branch in \eqref{branches} increases from $0$ to $4\pi(1-\nu)$, while the second decreases from $+\infty$ to $4\pi\nu$: since $4\pi\nu<4\pi(1-\nu)$, the two branches meet precisely when $M=M^*(\nu)$, which realizes the maximum.

 When $\nu=\frac 12$ we see immediately that $\bar\sigma_{11}<\bar\sigma_{02}$ for all $M$; the sup is attained only when $M\to\infty$ and takes the value $2\pi$.

\smallskip

Finally, one can easily check that when $\nu=0$ the two competing branches are $\bar\sigma_{11}(\Sigma(M),0)=4\pi\tanh(M)$ and $\bar\sigma_{20}(\Sigma(M),0)=\frac{4\pi}{M}$, and in this case $M^*(0)=M_0$ is the unique positive root of $x\tanh(x)=1$.

\begin{figure}[htp]

\centering
\includegraphics[width=.33\textwidth]{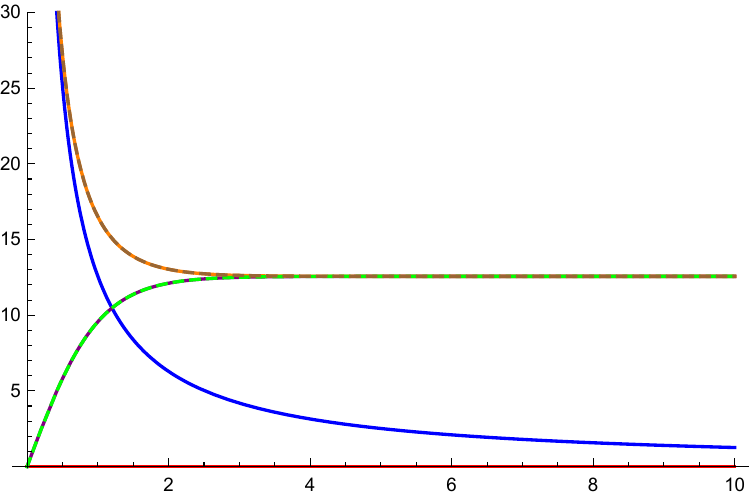}\hfill
\includegraphics[width=.33\textwidth]{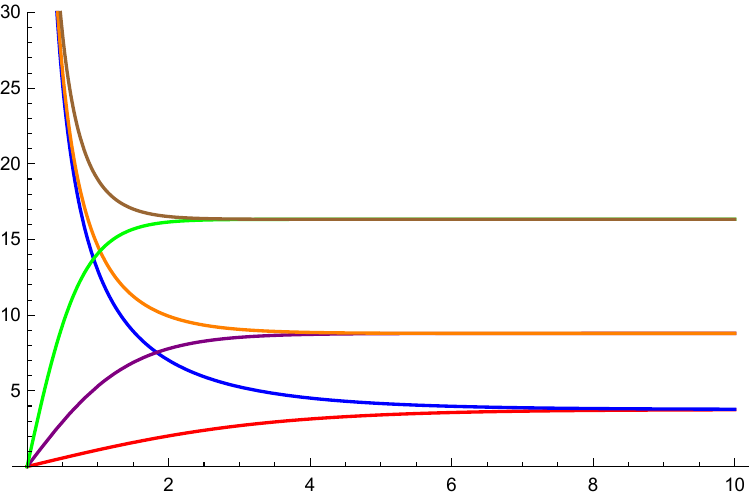}\hfill
\includegraphics[width=.33\textwidth]{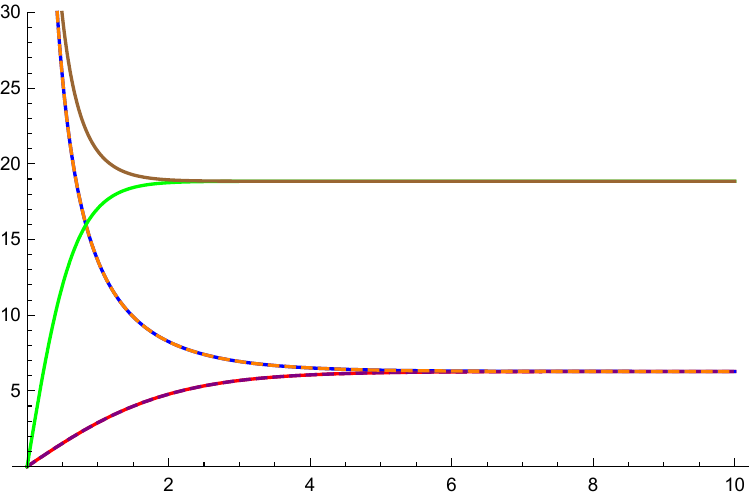}

\caption{From the left: $\nu=0,\nu=\frac{1}{3}$ and $\nu=\frac{1}{2}$. Colors: $\bar\sigma_{10}$ red; $\bar\sigma_{20}$ blue; $\bar\sigma_{11}$ purple; $\bar\sigma_{21}$ orange; $\bar\sigma_{1,{-1}}$ green;  $\bar\sigma_{2,{-1}}$ brown.}
\label{fig:figure3}

\end{figure}

\section{Sharp upper bound for the first normalized eigenvalue}\label{sec:bound:first}

In this section we consider inequalities for the first normalized eigenvalue. The first result is a sharp upper bound valid for {\it any} annulus (not necessarily rotationally invariant).

\begin{theorem}\label{max_s1} Let $\Sigma$ be a Riemannian annulus. Then, for all $\nu\in [0,\frac 12]$:
\begin{equation}\label{fne}
\bar\sigma_1(\Sigma,\nu)\leq 4\pi\nu\tanh( \nu M(\Sigma))
\end{equation}
where $M(\Sigma)$ is the conformal modulus of $\Sigma$.  If $\nu\ne 0$, equality holds if and only if $\Sigma$ is $\sigma$-homothetic to the cylinder $\Gamma_M=[-M(\Sigma),M(\Sigma)]\times \mathbb S^1$. In particular, the  following uniform upper bound holds:
$$
\bar\sigma_1(\Sigma,\nu)<4\pi\nu
$$
independently of the conformal modulus. 
\end{theorem}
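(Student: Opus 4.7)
The approach is to use the conformal equivalence of $\Sigma$ with the flat cylinder $\Gamma_M$ of modulus $M=M(\Sigma)$, combined with the two-dimensional conformal invariance of the magnetic Dirichlet energy (Subsection \ref{conf:inv}), to produce an explicit test function in the min-max characterisation \eqref{minmax}. The case $\nu=0$ is trivial, so assume $\nu\in(0,\tfrac12]$. Let $\phi:\Sigma\to\Gamma_M$ be the conformal diffeomorphism, and equip $\Gamma_M$ with the harmonic potential $A=\nu\,d\theta$ of flux $\nu$. The candidate test function is the first eigenfunction $u(t,\theta)=\cosh(\nu t)$ found in Section \ref{spec:rot}; a direct computation gives $|d^{A}u|^2=\nu^2\cosh(2\nu t)$, so that
\[
\int_{\Gamma_M}|d^{A}u|^2\,dv = 4\pi\nu\sinh(\nu M)\cosh(\nu M).
\]

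By gauge invariance, $\sigma_1(\Sigma,\nu)=\sigma_1(\Sigma,\phi^{*}A)$. Taking the pullback $v:=u\circ\phi$ as a test function on $\Sigma$, conformal invariance identifies the numerator $\int_\Sigma|d^{\phi^{*}A}v|^2\,dv$ with the cylinder integral above. Since $u\equiv\cosh(\nu M)$ on $\partial\Gamma_M$ is constant, so is $v$ on $\partial\Sigma$, hence $\int_{\partial\Sigma}|v|^2\,ds=\cosh^2(\nu M)\,|\partial\Sigma|$. Plugging into \eqref{minmax} yields
\[
\sigma_1(\Sigma,\nu)\;\leq\;\frac{4\pi\nu\sinh(\nu M)\cosh(\nu M)}{\cosh^2(\nu M)\,|\partial\Sigma|}\;=\;\frac{4\pi\nu\tanh(\nu M)}{|\partial\Sigma|},
\]
which is \eqref{fne} after multiplying by $|\partial\Sigma|$. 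The uniform bound $\bar\sigma_1(\Sigma,\nu)<4\pi\nu$ is then immediate from $\tanh<1$.

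For the equality case, if $\bar\sigma_1(\Sigma,\nu)=4\pi\nu\tanh(\nu M)$ then $v$ realises the infimum in the Rayleigh quotient and is therefore a first eigenfunction of $(\Sigma,\phi^{*}A)$. Writing the metric on $\Sigma$ in the cylindrical coordinates induced by $\phi$ as $g=\psi(t,\theta)(dt^2+d\theta^2)$, the interior equation $\Delta_{\phi^{*}A}v=0$ is automatic by conformal invariance. The exterior unit normal on $\partial\Sigma$ is $N_\Sigma=\pm\psi^{-1/2}\partial_t$ at $t=\pm M$, so the Steklov boundary condition $d^{\phi^{*}A}v(N_\Sigma)=\sigma_1 v$ becomes
\[
\psi^{-1/2}(\pm M,\theta)\,\nu\sinh(\nu M)\;=\;\sigma_1\,\cosh(\nu M),
\]
which forces $\psi$ to take the same constant value on both boundary components of $\Sigma$. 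This is exactly the condition that $\phi$ restricts to a $\sigma$-homothety; the converse follows from Proposition \ref{sigmah} combined with the explicit first eigenvalue on $\Gamma_M$ from Section \ref{spec:rot}. The main technical point is the equality analysis: one must verify that any minimiser of the Rayleigh quotient lies in the first eigenspace (standard from the variational principle) and that the scalar $\sigma_1$ in the single boundary equation forces the conformal factor to agree on both components, not merely to be constant on each; the rest is bookkeeping with the two-dimensional conformal invariance and the explicit cylinder computation.
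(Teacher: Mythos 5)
Your proposal is correct and follows essentially the same route as the paper: pull back the cylinder eigenfunction $\cosh(\nu t)$ via the conformal diffeomorphism, use gauge invariance plus conformal invariance of the magnetic Dirichlet energy in the min--max quotient, and exploit the constancy of the test function on $\partial\Sigma$; the equality analysis via the boundary condition forcing the conformal factor to be constant on $\partial\Sigma$ is also the paper's argument, merely phrased in the conformal coordinates rather than through $d\phi_x(N)$. No gaps.
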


We have seen that the estimate $\bar\sigma_1(\Sigma,\nu)\leq 4\pi\nu\tanh( \nu M(\Sigma))$ becomes an equality for all rotationally invariant, symmetric annuli.
If we don't fix the conformal modulus, then the upper bound $4\pi\nu$ is never attained, and it is reached asymptotically by families of rotationally invariant, symmetric annuli of conformal modulus $M\to+\infty$.

We remark once again that if  $\nu$ is arbitrary the upper bound holds with $\min_{n\in\mathbb Z}\abs{n-\nu}$ replacing $\nu$ in the right hand side. 

The proof is presented in Appendix \ref{app:4}. 

In the rest of this subsection we discuss some remarks on the sharpness of the estimate of Theorem \ref{max_s1}.

\subsection{Asymptotics of the first normalized eigenvalue as $\nu\to 0$}

A first remark is that in fact every Riemannian annulus realizes equality in \eqref{fne} {\it asymptotically }as $\nu\to 0$, in the following sense:

\begin{theorem}\label{hear_conf} For any Riemannian annulus $\Sigma$ (not necessarily rotationally invariant) one has:
$$
\lim_{\nu\to 0}\dfrac{1}{\nu^2}\bar\sigma_1(\Sigma,\nu)=4\pi M(\Sigma).
$$
That is, asymptotically as $\nu\to 0$:
$$
\begin{aligned}
\bar\sigma_1(\Sigma,\nu)&= 4\pi M(\Sigma)\nu^2+o(\nu^2)\\
&=4\pi\nu\tanh(\nu M(\Sigma))+o(\nu^2)
\end{aligned}
$$
\end{theorem}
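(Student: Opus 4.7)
\emph{Upper half.} The upper bound is already contained in Theorem~\ref{max_s1}: $\bar\sigma_1(\Sigma,\nu)\le 4\pi\nu\tanh(\nu M(\Sigma))$. Using $\tanh(x)=x-x^3/3+O(x^5)$ as $x\to 0$, this expands as $4\pi M(\Sigma)\nu^2+O(\nu^4)$, so $\limsup_{\nu\to 0}\bar\sigma_1(\Sigma,\nu)/\nu^2\le 4\pi M(\Sigma)$, and the full statement with $o(\nu^2)$ errors will follow once the matching $\liminf$ is in place.

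\emph{Hodge input for the lower bound.} By gauge invariance, I fix the potential as $A=\nu A_0$, with $A_0$ the harmonic representative of the flux-$1$ cohomology class satisfying absolute boundary conditions ($\delta A_0=0$ and $A_0(N)=0$ on $\partial\Sigma$). Since the $L^2$ norm of $1$-forms on surfaces is a conformal invariant (see Section~\ref{conf:inv}), I compute $\int_\Sigma|A_0|^2\,dv$ on the flat model $\Gamma_M=[-M,M]\times\mathbb S^1$, where the corresponding harmonic form is $d\theta$, giving $\int_\Sigma|A_0|^2\,dv=4\pi M$. The key consequence is a Hodge-type lower bound: any real closed $1$-form $\omega$ of flux $\lambda$ admits the $L^2$-orthogonal decomposition $\omega=\lambda A_0+df$ (the cross term $\int\langle A_0, df\rangle\,dv$ vanishes upon integration by parts using $\delta A_0=0$ and $A_0(N)=0$), whence
$$\int_\Sigma|\omega|^2\,dv \;\ge\; 4\pi M\,\lambda^2.$$

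\emph{Expansion of the minimizer and conclusion.} Let $u_\nu$ be a first eigenfunction normalized by $B(u_\nu):=\int_{\partial\Sigma}|u_\nu|^2\,ds=1$, so that $Q_\nu(u_\nu)=\sigma_1(\Sigma,\nu)=O(\nu^2)$. From the elementary inequality $|X-Y|^2\ge(1-\epsilon)|X|^2-(\epsilon^{-1}-1)|Y|^2$ applied to $|du_\nu-i\nu u_\nu A_0|^2$, combined with a trace-plus-Poincar\'e bound for $\|u_\nu\|_{L^2(\Sigma)}$, I extract $\int|du_\nu|^2\,dv=O(\nu^2)$, and then $\|w_\nu\|_{H^1}=O(\nu)$ for $w_\nu:=u_\nu-c_\nu$ with $c_\nu:=|\Sigma|^{-1}\int_\Sigma u_\nu\,dv$; the boundary normalization then forces $|c_\nu|^2=|\partial\Sigma|^{-1}+O(\nu)$. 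Writing $c_\nu=c_R+ic_I$, $w_\nu=w_R+iw_I$, one computes
$$|d^Au_\nu|^2 \;=\; |dw_R+\nu(c_I+w_I)A_0|^2 + |dw_I-\nu(c_R+w_R)A_0|^2 .$$
Applying the same elementary inequality once more, this time with the closed ``principal'' pieces $X_R:=dw_R+\nu c_I A_0$, $X_I:=dw_I-\nu c_R A_0$ (of respective fluxes $\nu c_I$, $-\nu c_R$) and the $w$-dependent remainders $\nu w_\bullet A_0$ (whose $L^2$ norms are $O(\nu^2)$ by the estimate on $w_\nu$), the Hodge bound of the previous paragraph applied to $X_R$ and $X_I$ yields
$$Q_\nu(u_\nu) \;\ge\; (1-\epsilon)\cdot 4\pi M\nu^2(c_R^2+c_I^2) + o(\nu^2) \;=\; (1-\epsilon)\,\frac{4\pi M\nu^2}{|\partial\Sigma|} + o(\nu^2).$$
Sending $\epsilon\to 0$ and multiplying by $|\partial\Sigma|$ gives $\bar\sigma_1(\Sigma,\nu)\ge 4\pi M\nu^2+o(\nu^2)$, which matches the upper bound of the first step. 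The main technical obstacle is precisely the non-closedness of the remainder $\nu w_\nu A_0$: the Hodge bound controls only closed forms, so one must first establish the a~priori estimate $\|w_\nu\|_{H^1}=O(\nu)$ -- extracted bootstrap-style from the very same energy bound one ultimately wishes to sharpen -- to absorb the $w$-dependent contributions into an $o(\nu^2)$ error.
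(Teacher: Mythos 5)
Your proposal is correct, but it proves the lower bound by a genuinely different mechanism than the paper. The paper's proof runs the comparison in the opposite direction: it takes the first eigenfunction $u_\nu$ of $\Sigma$, pulls it back through a conformal diffeomorphism $\phi:\Gamma_M\to\Sigma$, and uses it as a \emph{test function} for the flat cylinder $\Gamma_M$, whose first eigenvalue $\nu\tanh(\nu M)$ is known explicitly. Conformal invariance of the magnetic energy turns the min--max inequality on $\Gamma_M$ into $\sigma_1(\Sigma,\nu)\geq \nu\tanh(\nu M)\int_{\partial\Gamma_M}|\phi^*u_\nu|^2\,ds$, and the diamagnetic inequality shows $|u_\nu|\to|\partial\Sigma|^{-1/2}$ in $H^1(\Sigma)$, forcing the boundary integral to converge to $4\pi/|\partial\Sigma|$. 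Your argument instead stays on $\Sigma$: you identify the constant $4\pi M$ intrinsically as $\int_\Sigma|A_0|^2\,dv$, the (conformally invariant) $L^2$-norm of the harmonic representative with absolute boundary conditions, and you obtain the lower bound from the Hodge orthogonality $\int_\Sigma\langle A_0,df\rangle\,dv=0$ applied to the closed principal parts of $d^Au_\nu$, after a bootstrap gives $\|u_\nu-c_\nu\|_{H^1}=O(\nu)$ to absorb the non-closed remainder $\nu w_\nu A_0$. All the individual steps check out: the decomposition of $|d^Au_\nu|^2$ into real and imaginary channels is exact, the forms $X_R,X_I$ are explicitly of the form $\lambda A_0+df$ with $f\in H^1$ so the integration by parts is licit, and the $\epsilon$-dependence of the error is harmless provided you take $\nu\to0$ before $\epsilon\to0$ (or choose $\epsilon=\epsilon(\nu)\to0$ slowly). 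What the paper's route buys is brevity, at the price of needing the explicit flat-cylinder spectrum and the two-sided conformal transfer; what your route buys is that it never uses the model computation beyond the upper bound, and it isolates the correct conformal invariant ($\|A_0\|_{L^2}^2$) in a form that would survive on surfaces of higher genus or with more boundary components, where no single conformal modulus is available.
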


For the proof we refer to  Appendix \ref{app:4}. This inequality says that the magnetic Steklov spectrum detects the conformal modulus of an annulus, in the sense that if we know $\bar\sigma_1(\Sigma,\nu_n)$ for a sequence of fluxes $\nu_n\to 0$, then we also know  the conformal modulus of $\Sigma$.

\subsection{The case of planar annuli}

Another remark concerns the inequality $\bar\sigma_1(\Sigma,\nu)<4\pi\nu$ of Theorem  \ref {max_s1} for all $\nu\in [0,\frac 12]$: in this subsection, we examine the possibility of improving it when $\Sigma$ is a planar annulus. 

 Let $A(r,R)$ be the concentric annulus in the plane bounded by the concentric circles of radii $r$ and $R$: $A(r,R)=\{x\in\mathbb R^2:r<|x|<R\}$ centered at the origin. In this case we can take, as magnetic potential, the one-form $A=\nu d\theta$, where $\theta$ is the angular variable in polar coordinates in $\mathbb R^2$: in fact, $A$ is closed and has flux $\nu$ around both circles bounding $A(r,R)$.  Simple explicit calculations (which we omit for brevity) show that
$$
\bar\sigma_1(A(r,R),\nu)<2\pi\nu=\bar\sigma_1(D,\nu)
$$
where $D$ is any disk centered at $0$. Moreover $\bar\sigma_1(A(r,R),\nu)\to 2\pi\nu$ as $r\to 0^+$. This simple observation might suggest the following conjecture: 

\nero {\it is it true that every doubly connected planar domain $\Omega$ has first normalized Steklov eigenvalue smaller than $2\pi\nu$?} 

\smallskip

A bit surprisingly, one has the following result:

\begin{theorem}\label{planar} There exists a sequence of doubly connected planar domains $\{\Omega_{\epsilon}\}_{\epsilon\in(0,1)}$ such that, for any fixed $\nu\in [0,\frac 12]$:
$$
\lim_{\epsilon\to 0}\bar\sigma_1(\Omega_{\epsilon},\nu)=4\pi\nu.
$$
\end{theorem}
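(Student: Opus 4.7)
The plan is to exhibit planar doubly connected domains $\{\Omega_\epsilon\}$ whose conformal modulus blows up and whose two boundary components become asymptotically \emph{balanced}, so that the upper bound of Theorem~\ref{max_s1} becomes asymptotically sharp.

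\textbf{Construction.} Let $D\subset\mathbb{R}^2$ denote the open unit disk and set $\Omega_\epsilon:=D\setminus\bar K_\epsilon$, where $K_\epsilon\subset D$ is a small ``rosette'': a compact connected region with $\mathrm{diam}(K_\epsilon)=O(\epsilon)$ and Euclidean boundary length $|\partial K_\epsilon|=2\pi(1+o(1))$, obtained by decorating a disk of radius $r_\epsilon\to 0$ (comparable to $\epsilon$) with high-frequency, low-amplitude boundary oscillations. The construction arranges two features. First, because $K_\epsilon$ collapses to a point, the total Euclidean boundary satisfies $|\partial\Omega_\epsilon|=4\pi(1+o(1))$. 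Second, the uniformizing conformal map $\phi_\epsilon\colon A(r_\epsilon,1)\to\Omega_\epsilon$ restricts to a rotation on the outer unit circle, so $|\phi_\epsilon'|\equiv 1$ on $|z|=1$, while on the inner circle one has the uniform distortion estimate $|\phi_\epsilon'(r_\epsilon e^{i\theta})|\cdot r_\epsilon = 1+\eta(\theta)$ with $\|\eta\|_{L^\infty(\mathbb S^1)}=o(1)$.

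\textbf{Step 1 (upper bound).} The conformal modulus $M_\epsilon:=M(\Omega_\epsilon)$ tends to $+\infty$ as $\epsilon\to 0$ (standard estimate, since $K_\epsilon$ shrinks to a point). Theorem~\ref{max_s1} yields
\[
\bar\sigma_1(\Omega_\epsilon,\nu)\le 4\pi\nu\tanh(\nu M_\epsilon)\longrightarrow 4\pi\nu.
\]

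\textbf{Step 2 (lower bound).} By the conformal invariance of the magnetic Dirichlet energy (Subsection~\ref{conf:inv}), the problem on $\Omega_\epsilon$ is isospectral with a weighted magnetic Steklov problem on $A(r_\epsilon,1)$ with potential $\nu\,d\theta$ and boundary weights $\rho_+\equiv 1$ on $|z|=1$ and $\rho_-(\theta)=(1+\eta(\theta))/r_\epsilon$ on $|z|=r_\epsilon$. In the idealized case $\eta\equiv 0$, separation of variables (as in Section~\ref{spec:rot}) gives in the $k=0$ channel the dispersion relation $r_\epsilon^{\nu}(\nu+\sigma)=\nu-\sigma$, whose root $\sigma^{\mathrm{ideal}}=\nu(1-r_\epsilon^\nu)/(1+r_\epsilon^\nu)$ is the lowest eigenvalue---the modes $k\ne 0$ give strictly larger eigenvalues since $|k-\nu|>\nu$ for $\nu\in[0,\tfrac12]$ and $k\in\mathbb Z\setminus\{0\}$. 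A direct Rayleigh-quotient comparison, using that on the inner circle $\int\rho^{\mathrm{pert}}|u|^2\,ds_A=(1+O(\|\eta\|_\infty))\int\rho^{\mathrm{ideal}}|u|^2\,ds_A$, yields
\[
\sigma_1(\Omega_\epsilon,\nu)\ge \frac{\sigma^{\mathrm{ideal}}}{1+\|\eta\|_\infty}=\nu\bigl(1-o(1)\bigr).
\]
Multiplying by $|\partial\Omega_\epsilon|=4\pi(1+o(1))$ gives $\liminf_{\epsilon\to 0}\bar\sigma_1(\Omega_\epsilon,\nu)\ge 4\pi\nu$, and combined with Step~1 the theorem follows.

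\textbf{Main obstacle.} The delicate step is verifying the uniform distortion estimate on $\phi_\epsilon'$. Exact uniformity $|\phi_\epsilon'(z)|\cdot|z|\equiv 1$ on both boundary circles is impossible on a planar annulus: it would force $\phi_\epsilon'(z)=c/z$, which is multi-valued on $A(r_\epsilon,1)$. This is precisely the obstruction that keeps $\bar\sigma_1(\Omega,\nu)$ strictly below $4\pi\nu$ for every finite $\epsilon$, and explains why the limit is achievable only asymptotically. Quantifying the $L^\infty$ defect for the rosette $K_\epsilon$ is therefore the technical heart of the argument and requires careful conformal-mapping estimates on the small, oscillatory boundary $\partial K_\epsilon$, most naturally via harmonic measure or quasi-conformal techniques.
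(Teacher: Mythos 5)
Your overall strategy is the same as the paper's: reduce to a weighted magnetic Steklov problem on the round annulus $A(r_0,1)$ with weight $1$ on the outer circle and $1/r_0$ on the inner one, compute its first eigenvalue explicitly (your dispersion relation and the value $\nu(1-r_0^{\nu})/(1+r_0^{\nu})$ agree with the paper's computation for problem \eqref{weigh_AB}), and then realize that weight as the limit of genuine unweighted problems on domains with an oscillating inner boundary, letting $r_0\to 0$ at the end. The difference is in how the weight is realized, and that is where your argument has a genuine gap.

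The uniform distortion estimate $|\phi_\epsilon'(r_\epsilon e^{i\theta})|\,r_\epsilon=1+\eta(\theta)$ with $\|\eta\|_{L^\infty}=o(1)$, on which your Step 2 rests, cannot hold for the rosette you describe. Your $K_\epsilon$ has diameter $O(\epsilon)$ but boundary length $2\pi(1+o(1))$, so $\partial K_\epsilon$ is forced to oscillate with amplitude comparable to wavelength; for such a boundary the density $|\phi_\epsilon'|$ on the preimage circle (which is, up to normalization, the density of arclength of $\partial K_\epsilon$ with respect to harmonic measure) necessarily oscillates by a bounded-below amount: the deep parts of the crenellations carry much less harmonic measure than arclength, so $|\phi_\epsilon'|$ is large on their (tiny) preimages and correspondingly smaller elsewhere. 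Only the \emph{weak-*} convergence $|\phi_\epsilon'|\,ds\rightharpoonup r_\epsilon^{-1}ds$ is available, not convergence in $L^\infty$, and with merely weak-* convergence your ``direct Rayleigh-quotient comparison'' $\int\rho^{\mathrm{pert}}|u|^2=(1+O(\|\eta\|_\infty))\int\rho^{\mathrm{ideal}}|u|^2$ is unjustified. Passing from weak-* convergence of the boundary measures to convergence of the Steklov eigenvalues is precisely the nontrivial content of the homogenization result the paper invokes (Lemma \ref{lemma_bucur}, after Bucur et al.): it requires a compactness/trace argument, not a pointwise bound on the weight ratio. You flag this as the ``main obstacle'' but do not close it, and as stated (in $L^\infty$) the estimate is false. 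A secondary, more minor inaccuracy: the uniformizing map $\phi_\epsilon$ need not restrict to a rotation on the outer unit circle (that would force $\Omega_\epsilon$ to be a round annulus centered at the origin); one only gets $|\phi_\epsilon'|\to 1$ there by a normal-families argument as the inner obstacle shrinks. To repair the proof, replace your Step 2 by an appeal to the boundary-homogenization lemma applied to $A(r_0,1)$ with weight $w\equiv 1/r_0$ on the inner circle, followed by a diagonal argument in $r_0\to 0$, which is exactly the paper's route.
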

For the proof we refer to Appendix \ref{app:4}.  In other words, there exist sequences of planar annuli which saturate the inequality 
$\bar\sigma_1(\Sigma,\nu)<4\pi\nu$. As a consequence of Theorem \ref{max_s1}, it follows that the conformal modulus $M(\Omega_{\epsilon})$ of the annuli in these families tends to $+\infty$ as $\epsilon\to 0$. From the proof it can be easily deduced that the ratio of the two boundary lengths of $\Omega_{\epsilon}$ tends to $1$ as $\epsilon\to 0$. Again, this is not a surprise in view of Theorem \ref{max_s1}. A prototypical example of a domain $\Omega_{\epsilon}$ is illustrated in Figure \ref{fig_planar}.
%%%

\begin{figure}[htp]
\centering
\includegraphics[width=0.3\textwidth]{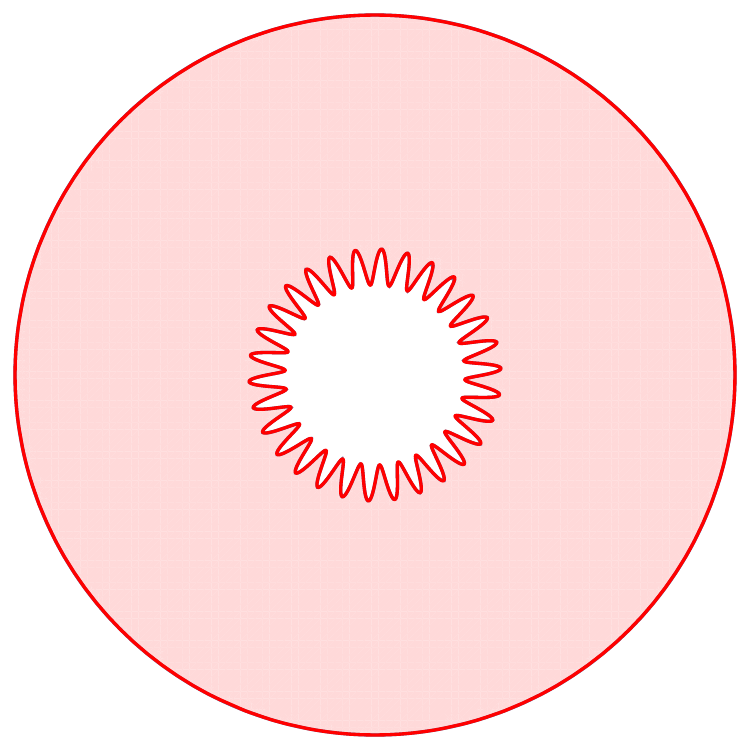}
\caption{A planar annulus $\Omega_{\epsilon}$ with first normalized eigenvalue close to $4\pi\nu$. The outer boundary is a unit circle, while the inner boundary is described by an oscillating function over a circle of radius $r_0(\epsilon)<1$. As $\epsilon\to 0$ we have that simultaneously $r_0(\epsilon)\to 0$, the width of the oscillations tends to zero, while their frequency tends to $+\infty$, in a suitable way (see the proof of Theorem \ref{planar} in Appendix \ref{app:4}).}
\label{fig_planar}

\end{figure}

\section{Upper bound for the second normalized eigenvalue for rotationally invariant annuli
}\label{sec:bound:second}

Through all this section we assume $\nu\in (0,\frac 12)$. As we have already mentioned, the case $\nu=0$ is the classical case of the Laplacian and has been treated in \cite{fs1}. For $\nu=\frac{1}{2}$ we have seen in Section \ref{spec:rot} that the first eigenvalue of a rotationally invariant, symmetric annulus is always double, and it has no maximum if we do not restrict to a fixed conformal class of annuli. 

\medskip

Recall that we have defined
$$
\sigma_2^*(\nu)=\max_M\bar\sigma_2(\Sigma(M),\nu).
$$  
where $\Sigma(M)$ denotes a rotationally invariant symmetric annulus of conformal modulus $M$: every such domain is in fact isospectral (for the normalized spectrum) to the flat annulus $\Gamma_M=[-M,M]\times \mathbb S^1$. Hence we can write
$$
\sigma_2^*(\nu)=\max_M \bar\sigma_2(\Gamma_M,\nu).
$$
We have seen in Proposition \ref{max_RIS} that
$$
\sigma^*_2(\nu)=\bar\sigma_2(\Sigma(M^*(\nu)),\nu),
$$
where $M^*=M^*(\nu)$ is the unique root of the equation $
(1-\nu)\tanh((1-\nu)x)=\nu\coth( \nu x)$. We summarize these facts in the following theorem:

\begin{theorem}\label{thm_RIS} Let $\Sigma$ be a rotationally symmetric, invariant annulus. Then, for all $\in (0,\frac 12)$, one has:
$$
\bar\sigma_2(\Sigma,\nu)\leq 4\pi\nu\coth( M^*(\nu)),
$$
Equality holds if and only if $\Sigma$ has conformal modulus $M^*(\nu)$.
\end{theorem}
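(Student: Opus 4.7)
The proof is essentially an assembly of results already established in the paper, so my plan is to make this assembly explicit and identify the precise logical chain.

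First I would reduce the problem from an arbitrary rotationally invariant, symmetric annulus $\Sigma$ to a flat cylinder. By Proposition \ref{RIS-sigmah}, any such $\Sigma$ with conformal modulus $M$ has the same normalized spectrum as the flat annulus $\Gamma_M=[-M,M]\times\mathbb{S}^1$. (This in turn rests on Proposition \ref{sigmah}: symmetry of $\psi$ at $t=\pm M$ gives equal boundary lengths, which makes the canonical conformal isometry $\Sigma\to\Gamma_M$ a $\sigma$-homothety.) So it suffices to prove the inequality on $\Gamma_M$, where we have the explicit spectrum from Section \ref{spec:rot}.

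Next I would use the separation-of-variables computation on $\Gamma_M$ to identify $\bar\sigma_2(\Gamma_M,\nu)$ as the minimum of the two competing branches
\[
\bar\sigma_{11}(\Gamma_M,\nu)=4\pi(1-\nu)\tanh((1-\nu)M),\qquad \bar\sigma_{20}(\Gamma_M,\nu)=4\pi\nu\coth(\nu M),
\]
as recorded in \eqref{branches}. To maximise this minimum over $M\in(0,+\infty)$, I would study the monotonicity of each branch: as $M$ increases from $0$ to $+\infty$, the first branch is strictly increasing from $0$ to the limit $4\pi(1-\nu)$, while the second is strictly decreasing from $+\infty$ to the limit $4\pi\nu$. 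Since $\nu\in(0,\tfrac12)$ we have $4\pi\nu<4\pi(1-\nu)$, so the two continuous monotone branches cross at a unique point $M=M^*(\nu)$, characterised by $(1-\nu)\tanh((1-\nu)M^*)=\nu\coth(\nu M^*)$. At this crossing point the minimum of the two branches is maximised, and for any other $M$ one of the two branches is strictly smaller than its value at $M^*(\nu)$.

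Putting the two steps together: for any rotationally invariant symmetric annulus $\Sigma$ of conformal modulus $M$,
\[
\bar\sigma_2(\Sigma,\nu)=\bar\sigma_2(\Gamma_M,\nu)\leq \bar\sigma_2(\Gamma_{M^*(\nu)},\nu)=4\pi\nu\coth(\nu M^*(\nu)),
\]
with equality precisely when $M=M^*(\nu)$, which is the claim of the theorem. The equality case follows from the strict monotonicity of each branch away from the crossing.

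I do not anticipate any real obstacle here: the conceptual work has already been done in Proposition \ref{RIS-sigmah} (reduction to the flat model) and in the explicit spectrum computation. The only mildly delicate point is the strict monotonicity of the two branches and the uniqueness of the crossing $M^*(\nu)$, but this is an elementary one-variable calculus exercise since $x\mapsto x\tanh(ax)$ is strictly increasing and $x\mapsto x\coth(ax)$ is strictly decreasing for $a>0$, $x>0$.
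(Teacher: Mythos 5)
Your proof is correct and follows exactly the paper's own route: reduction to the flat cylinder via Proposition \ref{RIS-sigmah}, identification of $\bar\sigma_2$ as the minimum of the two branches in \eqref{branches}, and the crossing argument at the unique $M^*(\nu)$ from Proposition \ref{max_RIS} (the theorem is explicitly presented in the paper as a summary of these facts). One minor slip in your closing aside: the monotonicity actually needed is that of $M\mapsto\tanh((1-\nu)M)$ (increasing) and $M\mapsto\coth(\nu M)$ (decreasing) with $\nu$ fixed --- the map $x\mapsto x\coth(ax)$ you cite is in fact increasing --- but this does not affect the argument, since the body of your proof states the correct monotonicity of the two branches.
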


 We prove in Appendix \ref{app:5} the two following fundamental lemmas which describe the behavior of $M^*(\nu)$ and $\sigma_2^*(\nu)$ as functions of $\nu$.

\begin{lemma}\label{M*incr} The function $M^*:(0,\frac 12)\to \mathbb R$ is increasing, and moreover:
$$
\lim_{\nu\to 0^+}M^\star (\nu)=M_0, \quad \lim_{\nu\to\frac 12^-}M^\star (\nu)=+\infty,
$$
where $M_0$ is the unique positive root of the equation $x\tanh(x)=1$.  That is, $M^*$ maps $(0,\frac 12)$ one-to-one to $(M_0,\infty)$.
\end{lemma}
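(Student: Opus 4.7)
The plan is to rewrite the defining relation in a form that exhibits two competing monotone families, prove strict monotonicity of $M^*$ by a comparison argument on the crossing point, and then extract the two limiting values directly from the defining relation.

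\smallskip

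First, fix the notation $f_\nu(x):=(1-\nu)\tanh((1-\nu)x)$ and $g_\nu(x):=\nu\coth(\nu x)$, so that $M^*(\nu)$ is the unique positive solution of $f_\nu(x)=g_\nu(x)$. For fixed $\nu\in(0,\tfrac12)$, $f_\nu$ strictly increases from $0$ to $1-\nu$ while $g_\nu$ strictly decreases from $+\infty$ to $\nu<1-\nu$, confirming existence and uniqueness of the crossing. The dependence on $\nu$ at fixed $x$ is the key input: $f_\nu(x)$ is strictly \emph{decreasing} in $\nu$ (write $u=1-\nu$, so that $f_\nu(x)=u\tanh(ux)$ increases in $u$), while $g_\nu(x)$ is strictly \emph{increasing} in $\nu$. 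The latter reduces, via $y=\nu x$, to the monotonicity of $y\mapsto y\coth(y)$ on $(0,\infty)$, which follows from
\[
\frac{d}{dy}\bigl(y\coth y\bigr)=\frac{\tfrac12\sinh(2y)-y}{\sinh^2 y}>0,
\]
using $\sinh(2y)>2y$ for $y>0$. Given $\nu_0<\nu_1$ in $(0,\tfrac12)$, evaluating at $x=M^*(\nu_0)$ gives
\[
f_{\nu_1}(x)<f_{\nu_0}(x)=g_{\nu_0}(x)<g_{\nu_1}(x),
\]
and since $f_{\nu_1}$ is increasing and $g_{\nu_1}$ is decreasing, their unique crossing $M^*(\nu_1)$ must lie strictly to the right of $M^*(\nu_0)$. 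Hence $M^*$ is strictly increasing.

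\smallskip

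For the limits, monotonicity automatically produces two limits in $[0,+\infty]$. To compute $L_0:=\lim_{\nu\to 0^+}M^*(\nu)$, note that $L_0\le M^*(\nu_0)<\infty$ for any $\nu_0\in(0,\tfrac12)$, so $L_0$ is finite. Rewrite the defining relation, after multiplying by $\tanh(\nu x)$ and then by $M^*/(\nu M^*)$, as
\[
(1-\nu)\tanh\!\bigl((1-\nu)M^*(\nu)\bigr)\;\frac{\tanh(\nu M^*(\nu))}{\nu M^*(\nu)}\;M^*(\nu)=1.
\]
Since $\nu M^*(\nu)\to 0$, the middle factor tends to $1$, and passing to the limit yields $L_0\tanh(L_0)=1$, so $L_0=M_0$ (this also rules out $L_0=0$). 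For the other limit, the equivalent form $\tanh((1-\nu)M^*(\nu))\tanh(\nu M^*(\nu))=\nu/(1-\nu)$ has right-hand side tending to $1$ as $\nu\to(\tfrac12)^-$, while each $\tanh$ factor is less than $1$; hence both must tend to $1$, forcing $\nu M^*(\nu)\to+\infty$ and therefore $M^*(\nu)\to+\infty$. Combined with strict monotonicity and continuity (from the implicit function theorem applied to the real-analytic defining relation), $M^*$ is a bijection $(0,\tfrac12)\to(M_0,+\infty)$.

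\smallskip

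The only point that requires a small trick is the sign of $\partial_\nu g_\nu(x)$, which is settled by the elementary inequality $\sinh(2y)>2y$; once this is in hand, the rest of the argument is the transparent picture of two curves sliding past each other in opposite directions.
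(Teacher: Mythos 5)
Your proof is correct, and it takes a noticeably different route from the paper's. For monotonicity, the paper applies the implicit function theorem to $F(\nu,M)=(1-\nu)\tanh((1-\nu)M)-\nu\coth(\nu M)$, writes out $\frac{d}{d\nu}M^*(\nu)$ as a quotient, and checks that both terms in the numerator are positive; you instead observe that $f_\nu$ decreases and $g_\nu$ increases in $\nu$ at fixed $x$ and read off strict monotonicity of the crossing point by a two-line comparison. The substance is the same --- the two positive quantities in the paper's numerator, $\tanh((1-\nu)M)+\frac{(1-\nu)M}{\cosh^2((1-\nu)M)}$ and $\coth(\nu M)-\frac{\nu M}{\sinh^2(\nu M)}$, are exactly $-\partial_\nu f_\nu(M)$ and $\partial_\nu g_\nu(M)$, and your inequality $\sinh(2y)>2y$ is the paper's observation that $\cosh(x)\sinh(x)-x$ vanishes at $0$ and has nonnegative derivative --- but your packaging avoids the quotient formula entirely and makes the strictness of the monotonicity transparent. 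For the limits, the paper argues by contradiction at $\nu\to\frac12^-$ using the identity $\cosh((1-2\nu)M)=(1-2\nu)\cosh(M)$, and at $\nu\to 0^+$ by extracting a convergent sequence and passing to the limit in the defining equation (where it must separately rule out the limit being $0$); your single normalized form $(1-\nu)\tanh((1-\nu)M^*)\tanh(\nu M^*)=\nu$ handles both endpoints cleanly and in particular dispatches $L_0=0$ for free. Both proofs are complete; yours is the more elementary and arguably more readable of the two, while the paper's implicit-differentiation setup has the side benefit that the formula for $\frac{d}{d\nu}M^*(\nu)$ is reused verbatim in the proof of Lemma \ref{s*incr}, so the authors' choice is not wasted effort in context.
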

Note that $M_0\approx 1.2$ is the conformal modulus of the critical catenoid.

\begin{lemma}\label{s*incr} The function
$
\sigma^*_2:(0,\frac 12)\to\mathbb R
$
is decreasing, and we have:
$$
\lim_{\nu\to 0^+}\sigma_2^*(\nu)=\frac{4\pi}{M_0}, \quad \lim_{\nu\to\frac 12^-}\sigma_2^*(\nu)=2\pi.
$$
where $M_0$ is the conformal modulus of the critical catenoid. Therefore for all $\nu\in [0,\frac 12]$ (hence, also for all $\nu\in\mathbb R$)  one has
$$
\sigma_2^*(\nu)\leq \frac{4\pi}{M_0},
$$
where $\frac{4\pi}{M_0}$ is the second normalized non-magnetic Steklov eigenvalue (i.e., for $\nu=0$) of the critical catenoid.
\end{lemma}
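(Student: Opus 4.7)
My approach is threefold: (i) compute the two limits using Lemma \ref{M*incr}, (ii) prove strict monotonicity by implicit differentiation, reducing the sign of $(\sigma_2^*)'(\nu)$ to a clean hyperbolic inequality, and (iii) prove that inequality via a Taylor series argument. The uniform bound then follows immediately.

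Set $L=M^*(\nu)$, $a:=(1-\nu)L$, $b:=\nu L$, so that $a>b>0$ for $\nu\in(0,\tfrac12)$ and, by the defining equation of $M^*$,
\[
\frac{\sigma_2^*(\nu)}{4\pi}=(1-\nu)\tanh a=\nu\coth b.
\]
For the limits: by Lemma \ref{M*incr}, $L\to M_0$ as $\nu\to 0^+$, whence $a\to M_0$ and $\sigma_2^*(\nu)\to 4\pi\tanh(M_0)=4\pi/M_0$, using the defining relation $M_0\tanh M_0=1$; as $\nu\to\tfrac12^-$, Lemma \ref{M*incr} gives $L\to+\infty$, so $a\to+\infty$, $\tanh a\to 1$, and $\sigma_2^*(\nu)\to 4\pi\cdot\tfrac12=2\pi$.

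For monotonicity, let $\phi(\nu,M):=(1-\nu)\tanh((1-\nu)M)-\nu\coth(\nu M)$, so that $\phi(\nu,M^*(\nu))=0$. Short computations give
\[
\partial_M\phi=(1-\nu)^2\sech^2 a+\nu^2\csch^2 b>0,\qquad \partial_\nu\phi=-(\tanh a+a\sech^2 a)-(\coth b-b\csch^2 b)<0,
\]
so $(M^*)'(\nu)=-\partial_\nu\phi/\partial_M\phi>0$ (this reproves Lemma \ref{M*incr}). Differentiating $\sigma_2^*/(4\pi)=\nu\coth(\nu L)$ and substituting $(M^*)'(\nu)$, the sign of $(\sigma_2^*)'(\nu)$ equals that of $QR-SP$, where $P=\tanh a+a\sech^2 a$, $Q=\coth b-b\csch^2 b$, $R=(1-\nu)^2\sech^2 a$, $S=\nu^2\csch^2 b$. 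Using the identities
\[
\sinh^2 b\,(\coth b-b\csch^2 b)=\tfrac12(\sinh(2b)-2b),\qquad \cosh^2 a\,(\tanh a+a\sech^2 a)=\tfrac12(\sinh(2a)+2a),
\]
together with the substitutions $1-\nu=a/(a+b)$ and $\nu=b/(a+b)$, the condition $(\sigma_2^*)'(\nu)<0$ collapses to
\[
a^2\bigl(\sinh(2b)-2b\bigr)<b^2\bigl(\sinh(2a)+2a\bigr). \qquad (\star)
\]
I expect this reduction to $(\star)$ to be the most delicate step: the algebra has several cancellations that are easy to mishandle, but once $(\star)$ is isolated, the conclusion becomes essentially automatic.

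To prove $(\star)$, expand in Taylor series: $\sinh(2x)-2x=\sum_{k\geq 1}(2x)^{2k+1}/(2k+1)!$ and $\sinh(2x)+2x=4x+\sum_{k\geq 1}(2x)^{2k+1}/(2k+1)!$, whence
\[
b^2\bigl(\sinh(2a)+2a\bigr)-a^2\bigl(\sinh(2b)-2b\bigr)=4ab^2+\sum_{k\geq 1}\frac{2^{2k+1}\,a^2 b^2\,(a^{2k-1}-b^{2k-1})}{(2k+1)!},
\]
which is strictly positive since $a>b>0$. Hence $(\sigma_2^*)'(\nu)<0$ on $(0,\tfrac12)$, so $\sigma_2^*$ is (strictly) decreasing. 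Combined with $\lim_{\nu\to 0^+}\sigma_2^*(\nu)=4\pi/M_0$ this yields $\sigma_2^*(\nu)\leq 4\pi/M_0$ for every $\nu\in[0,\tfrac12]$, and the bound extends to all $\nu\in\mathbb R$ by the periodicity $\nu\mapsto\nu+1$ and sign-invariance $\nu\mapsto-\nu$ of the magnetic Steklov spectrum recalled in Subsection \ref{sub:annulus}.
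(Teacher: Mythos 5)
Your proof is correct, and the computations check out: the implicit differentiation giving $(M^*)'=(P+Q)/(R+S)>0$, the reduction of the sign of $(\sigma_2^*)'(\nu)$ to that of $QR-SP$, the identities $\sinh^2 b\,(\coth b-b\csch^2 b)=\tfrac12(\sinh(2b)-2b)$ and $\cosh^2 a\,(\tanh a+a\sech^2 a)=\tfrac12(\sinh(2a)+2a)$, the passage from $(1-\nu)^2(\sinh(2b)-2b)<\nu^2(\sinh(2a)+2a)$ to $(\star)$ via $1-\nu=a/(a+b)$, $\nu=b/(a+b)$, and the term-by-term Taylor comparison using $a>b>0$. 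The setup coincides with the paper's (same implicit equation, same derivative formula for $\sigma_2^*(\nu)=4\pi\nu\coth(\nu M^*(\nu))$), but the heart of the monotonicity argument is genuinely different. The paper substitutes $M^*(\nu)=\log(y)/\nu$ and then runs a cascade of auxiliary functions $f_\nu,g_\nu,h_\nu,k_\nu$, establishing the sign through repeated differentiation, completing squares, factoring, and a final convexity argument --- roughly a page of delicate calculus with many opportunities for sign errors. You instead pass to the symmetric variables $a=(1-\nu)M^*$, $b=\nu M^*$, which collapses the whole condition to the single clean inequality $a^2(\sinh(2b)-2b)<b^2(\sinh(2a)+2a)$, dispatched in three lines by comparing Taylor coefficients. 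Your route is shorter and more transparent, and it isolates the structural reason the monotonicity holds; the paper's route, while heavier, stays entirely within elementary closed-form manipulations and never invokes an infinite series. Your direct computation of the two limits from Lemma \ref{M*incr} and $M_0\tanh M_0=1$ is also slightly more explicit than the paper's appeal to monotonicity, but both are routine.
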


We are able to prove that Theorem \ref{thm_RIS} holds without the symmetry assumption, namely, it holds for all rotationally invariant annuli. This is the main result of this section.

\begin{thm}\label{max_RI}
Let $\Sigma$ be a rotationally invariant annulus. Then
$$
\sigma_2(\Sigma,\nu)\leq\sigma_2^*(\nu),
$$
where $\sigma_2^*(\nu)=4\pi(1-\nu)\tanh((1-\nu)M^*(\nu))=4\pi\nu\coth(\nu M^*(\nu))$ and $M^*(\nu)$ is the unique positive root of $(1-\nu)\tanh((1-\nu)x)=\nu\coth(\nu x)$. Equality holds if and only if $\Sigma$ is $\sigma$-homothetic to $[-M^*(\nu),M^*(\nu)]\times\mathbb S^1$.
\end{thm}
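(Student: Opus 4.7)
The plan is to reduce the problem to the explicit spectrum of the flat cylinder with prescribed boundary weights, and then to carry out a case analysis. By Proposition~\ref{sigmah}, two rotationally invariant annuli with the same conformal modulus $M$ and the same ratio of boundary lengths are $\sigma$-homothetic (the identity map, suitably rescaled), so their normalized spectra coincide. Normalizing the two boundary lengths to $2\pi a,\,2\pi b$ with $ab=1$ (possible by an isotropic homothety, under which the normalized spectrum is invariant), the normalized spectrum of any rotationally invariant annulus becomes a function of $M$ and the asymmetry parameter $\tau:=a+b\geq 2$, with $\tau=2$ precisely in the symmetric case.

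Using the conformal invariance of the magnetic Dirichlet energy and separation of variables $u(t,\theta)=\phi(t)e^{ik\theta}$, the equation $\Delta_A u=0$ reduces to $\phi''=\mu_k^2\phi$ on $[-M,M]$ with $\mu_k:=|k-\nu|$, and the Steklov conditions at $t=\pm M$ become Robin-type conditions weighted by $b$ and $a$. A $2\times 2$ computation then yields, for each $k\in\mathbb Z$, two normalized eigenvalues
\begin{equation*}
\bar\sigma_{j,k}(M,\tau)=\pi\mu_k\tau\Bigl(\tau\coth(2\mu_k M)\mp\sqrt{\tau^2\coth^2(2\mu_k M)-4}\Bigr),\qquad j=1,2,
\end{equation*}
specializing to $4\pi\mu_k\tanh(\mu_k M)$ and $4\pi\mu_k\coth(\mu_k M)$ when $\tau=2$. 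Since $\bar\sigma_{1,k}$ is strictly increasing in $\mu_k$, and $\mu_0=\nu<\mu_1=1-\nu\leq\mu_k$ for every $k\neq 0$, one has $\bar\sigma_1(\Sigma,\nu)=\bar\sigma_{1,0}$, and in particular $\bar\sigma_2(\Sigma,\nu)\leq\min\{\bar\sigma_{1,1}(M,\tau),\bar\sigma_{2,0}(M,\tau)\}$. For fixed $M$, a direct check shows $\bar\sigma_{1,1}$ is decreasing in $\tau$ and $\bar\sigma_{2,0}$ is increasing in $\tau$.

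The argument splits on $M$. If $M\leq M^*(\nu)$, monotonicity in $\tau$ followed by monotonicity in $M$ gives $\bar\sigma_2\leq\bar\sigma_{1,1}(M,\tau)\leq\bar\sigma_{1,1}(M,2)=4\pi(1-\nu)\tanh((1-\nu)M)\leq\sigma_2^*(\nu)$, with equality forcing $\tau=2$ and $M=M^*(\nu)$. If $M>M^*(\nu)$, at $\tau=2$ we have $\bar\sigma_{2,0}(M,2)<\sigma_2^*(\nu)<\bar\sigma_{1,1}(M,2)$, so by the opposite $\tau$-monotonicities and the intermediate value theorem the curves $\tau\mapsto\bar\sigma_{2,0}(M,\tau)$ and $\tau\mapsto\bar\sigma_{1,1}(M,\tau)$ meet at a unique $\tau^*(M)>2$ with common value $V(M)$. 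Being the minimum of a monotone-decreasing and a monotone-increasing function of $\tau$, $\min\{\bar\sigma_{1,1},\bar\sigma_{2,0}\}$ is bounded above by $V(M)$, whence $\bar\sigma_2(M,\tau)\leq V(M)$ for every $\tau\geq 2$.

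The main obstacle is to prove $V(M)<V(M^*(\nu))=\sigma_2^*(\nu)$ for $M>M^*(\nu)$, which I address by showing $V'(M)<0$ on this interval. The crossing condition rewrites as $(T_0+W_0)(T_1+W_1)=(1-\nu)/\nu$, where $T_j=\tau\coth(2\mu_j M)/2$ and $W_j=\sqrt{T_j^2-1}$ for $j=0,1$. Implicit differentiation of both expressions $V=2\pi\nu\tau^*(T_0+W_0)=2\pi(1-\nu)\tau^*/(T_1+W_1)$, eliminating $(\tau^*)'(M)$ from the resulting linear relations, reduces the sign of $V'(M)$ to the sign of $(1-\nu)^2\csch^2(2(1-\nu)M)-\nu^2\csch^2(2\nu M)$. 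The auxiliary function $f(x):=x^2\csch^2(2xM)$ is strictly decreasing on $(0,\infty)$: its logarithmic derivative is $2/x-4M\coth(2xM)$, negative because $y\coth y>1$ for all $y>0$. Since $\nu<1-\nu$, one gets $f(1-\nu)<f(\nu)$, hence $V'(M)<0$ as required. The strict monotonicity in each case then yields the equality statement: the bound is saturated precisely when $\tau=2$ and $M=M^*(\nu)$, i.e., iff $\Sigma$ is $\sigma$-homothetic to $[-M^*(\nu),M^*(\nu)]\times\mathbb S^1$.
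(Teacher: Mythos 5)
Your proposal is correct, and while it begins exactly as the paper does, it diverges in the decisive step. Both arguments reduce a rotationally invariant annulus to a flat cylinder with weighted Steklov conditions encoding the two boundary lengths and read off the two explicit branches per Fourier mode: your formula $\bar\sigma_{j,k}=\pi\mu_k\tau\bigl(\tau\coth(2\mu_kM)\mp\sqrt{\tau^2\coth^2(2\mu_kM)-4}\bigr)$ is precisely \eqref{L11}--\eqref{L22} after the normalization $ab=1$, $Z=2M$ (I verified the $2\times2$ determinant computation and the specialization at $\tau=2$). The difference is in how the two-parameter family $(M,\tau)$ is swept. The paper fixes the boundary ratio, maximizes over the modulus, and then realizes each constrained maximizer as a slab of a translated catenoid $\Sigma_a$ between the roots of $(1-\nu)\tanh((1-\nu)(z-a))=\nu\coth(\nu z)$, proving that the resulting value $g(a)$ decreases for $a>0$ via a fairly involved computation requiring control of $z_i'(a)$. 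You instead fix $M$, maximize over the asymmetry $\tau$ (trivially for $M\leq M^*$, and via the crossing value $V(M)$ for $M>M^*$), and show $V'(M)<0$ by implicit differentiation of the crossing condition $(T_0+W_0)(T_1+W_1)=(1-\nu)/\nu$. I checked the elimination: with $x=T_0/W_0$, $y=T_1/W_1$ one gets
$$
\frac{V'(M)}{V(M)}=\frac{\tau}{(x+y)\,W_0W_1\,\mu_0(T_1+W_1)}\Bigl((1-\nu)^2\csch^2(2(1-\nu)M)-\nu^2\csch^2(2\nu M)\Bigr),
$$
with positive prefactor, so your monotonicity of $x\mapsto x^2\csch^2(2xM)$ does close the argument, and the equality analysis goes through. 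Your endgame is appreciably cleaner than the paper's estimate of $g'(a)$; what the paper's catenoid parametrization buys in exchange is the explicit free-boundary geometric picture of the maximiser that is reused in Section \ref{geom2}. The only places where you assert rather than compute (the $2\times2$ eigenvalue formula, and the statement that the separated solutions exhaust the spectrum) are gaps the paper shares, so I do not count them against you.
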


This result is the analogue of \cite{fs1} for the non-magnetic Steklov case. We prove this theorem in Section \ref{app:5}. As a corollary we have

\begin{cor}\label{cor_max_RI} For any rotationally invariant annulus $\Sigma$ and any flux $\nu\in\mathbb R$ one has the upper bound:
$$
\bar\sigma_2(\Sigma,\nu)\leq \frac{4\pi}{M_0}
$$
with equality if and only if $\nu$ is an integer and $\Sigma$ is $\sigma$-homothetic to the critical catenoid. 
\end{cor}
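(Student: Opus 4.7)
The plan is to reduce the general statement to the range $\nu\in[0,\tfrac12]$ via the two elementary symmetries of the magnetic Steklov spectrum, and then combine Theorem \ref{max_RI} with Lemma \ref{s*incr} (for strictly positive $\nu$) and with the classical Fraser-Schoen theorem (for $\nu=0$).

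First, using periodicity ${\rm Spec}(\Sigma,\nu)={\rm Spec}(\Sigma,\nu+n)$ for $n\in\mathbb Z$ and the invariance under $\nu\mapsto-\nu$ noted in the Introduction, it suffices to establish the corollary for $\nu\in[0,\tfrac12]$; moreover the equality case for a general $\nu\in\mathbb R$ will correspond to $\nu\in\mathbb Z$ if and only if the reduced value lies at the endpoint $\nu=0$.

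Next, I split into three subcases. \textbf{Case $\nu\in(0,\tfrac12)$:} By Theorem \ref{max_RI} we have $\bar\sigma_2(\Sigma,\nu)\leq\sigma_2^*(\nu)$, and by Lemma \ref{s*incr} the function $\sigma_2^*$ is strictly decreasing on $(0,\tfrac12)$ with left limit $4\pi/M_0$; hence $\sigma_2^*(\nu)<4\pi/M_0$, giving strict inequality. \textbf{Case $\nu=\tfrac12$:} Again Theorem \ref{max_RI} (extended by continuity, or using directly the explicit computation on symmetric annuli together with the $\sigma$-homothety reduction of Proposition \ref{RIS-sigmah}) yields $\bar\sigma_2(\Sigma,\tfrac12)\leq\sigma_2^*(\tfrac12)=2\pi$, and since $M_0\approx1.2<2$ we get $2\pi<4\pi/M_0$, so the inequality is strict. \textbf{Case $\nu=0$ (equivalently $\nu\in\mathbb Z$):} This is precisely the Fraser-Schoen theorem for rotationally invariant annuli recalled in Subsection \ref{sub:intro:nomag}: $\bar\sigma_2(\Sigma,0)\leq 4\pi/M_0$, with equality characterizing the critical catenoid up to $\sigma$-homothety.

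Combining these three cases gives the desired upper bound for all $\nu\in[0,\tfrac12]$, hence for all $\nu\in\mathbb R$. As for the equality case, the strict inequalities in the first two subcases show that equality forces $\nu\in\mathbb Z$, at which point Fraser-Schoen identifies the maximiser as the critical catenoid up to $\sigma$-homothety. I do not foresee any genuine obstacle here: the two ingredients (Theorem \ref{max_RI} and Lemma \ref{s*incr}) are already in place, and the only small matter of bookkeeping is to make sure that the boundary flux value $\nu=\tfrac12$, where Theorem \ref{max_RI} was stated for $\nu\in(0,\tfrac12)$, is handled correctly; this is routine since the explicit spectrum on symmetric rotationally invariant annuli computed in Section \ref{spec:rot} continues to $\nu=\tfrac12$ and yields the trivial bound $2\pi<4\pi/M_0$.
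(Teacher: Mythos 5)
Your proposal is correct and follows essentially the same route the paper intends: the corollary is stated as an immediate consequence of Theorem \ref{max_RI} combined with the monotonicity and limits of $\sigma_2^*$ from Lemma \ref{s*incr}, with the endpoint cases $\nu=0$ (Fraser--Schoen) and $\nu=\tfrac12$ (where $\sup\bar\sigma_2=2\pi<4\pi/M_0$) handled exactly as you describe. Your explicit bookkeeping of the reduction to $[0,\tfrac12]$ and of the strictness of the inequality for $\nu\notin\mathbb Z$ is just a more careful writeup of the same argument.
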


Hence equality holds if and only if $\nu$ ia an integer and $\Sigma$ admits a free boundary conformal minimal immersion in the unit ball which restricts to a homothety on the boundary. 

\medskip We believe that the upper bound of Theorem \ref{max_RI} holds for any annulus.

\medskip Concerning the case of $\nu=\frac{1}{2}$, we recall from Section \ref{spec:rot} that for rotationally symmetric, invariant annuli, the first eigenvalue is double: $\bar\sigma_1(\Sigma,\nu)=\bar\sigma_2(\Sigma,\nu)=2\pi\tanh(M/2)$. Hence there is no maximiser for $\bar\sigma_2(\Sigma,\nu)$ in the class of rotationally invariant, symmetric annuli, $\sup_{\Sigma}\bar\sigma_2(\Sigma,\nu)=2\pi$, and is reached asymptotically by families of rotationally invariant, symmetric annuli with $M\to+\infty$. In view of Theorem \ref{max_RI}, the same considerations hold if we consider  rotationally invariant annuli.

%

%%%

\section{Geometry of maximisers: first eigenvalue}\label{geom1}

Theorem \ref{max_s1} says that among all annuli $\Sigma$ of conformal modulus $M\in (0,\infty)$, $\bar\sigma_1(\Sigma,\nu)$ has a maximum: $\bar\sigma_1(\Sigma,\nu)\leq 4\pi\nu\tanh(\nu M)$. A maximiser is the cylinder $\Gamma_M=[-M,M]\times\mathbb S^1$, and any annulus $\sigma$-homothetic to $\Gamma_M$. In this section we consider some distinguished representatives of the maximiser in the conformal class and study their geometric properties. Inspired by the work of Fraser and Schoen \cite{fs1} we ask the following question:  {\it is there a noteworthy class of free boundary rotation surfaces which realize the equality? Are these surfaces minimal  in some sense?}

\medskip

We will think of $\Sigma$ as being obtained by rotating the graph of the function $x=\rho(z)$ around the $z$-axis over an interval $[-L,L]$  and we assume $\rho(z)> 0$ for all $z\in (-L,L)$.  Hence a parametrization can be:
$$
f(t,\theta)=(\rho(t)\cos\theta,\rho(t)\sin\theta, t), \ \ \  (t,\theta)\in [-L,L]\times\mathbb S^1.
$$
Note that the function $\rho$ measures the distance to the axis of rotation (the $z$-axis). Through all the paper, we use $\rho$ to denote the radial coordinate in $\mathbb R^3$ for the cylindrical coordinates $(\rho,\theta,z)$. In Cartesian coordinates $(x,y,z)$, $\rho=\sqrt{x^2+y^2}$.

\begin{defi}[$\alpha$-surface]
Let $\alpha\in\mathbb R$ be a real parameter. 
We say that the rotation surface $\Sigma$ is an {\it $\alpha$-surface} if the profile $\rho$ satisfies
\begin{equation}\label{asurf-ODE}
\rho\rho''=\alpha(1+\rho'^2).
\end{equation}
\end{defi}
These surfaces are examples of the so-called {\it Weingarten surfaces}, that is, surfaces whose principal curvatures $\kappa_1$ and  $\kappa_2$ satisfy a certain functional relation $\Phi(\kappa_1,\kappa_2)=0$. In fact, it turns out that the principal curvatures of $\alpha$-surfaces are proportional (see Theorem \ref{geom-asurf} here below). These surfaces have been considered in \cite{collharr}.  More recently,  the complete classification of rotation surfaces in $\mathbb R^3$ whose principal curvatures satisfy a linear relation has been obtained in \cite{LoPa}.

\medskip

Up to homotheties in $\mathbb R^3$, there is a unique $\alpha$-surface. As a reference $\alpha$-surface we take the one whose profile solves the problem: 
 $$
\begin{cases}
\rho\rho''=\alpha(1+\rho'^2) & {\rm in\ }(-L,L)\\
\rho(0)=1\,,\rho'(0)=0.
\end{cases}
$$
Here $L$ can be also $+\infty$. This surface is symmetric about the $xy$-plane.

\begin{example} For $\alpha=1$ the solution is $\rho(t)=\cosh t$ hence $1$-surfaces are simply the catenoids. 
\end{example}
\begin{example} For $\alpha=-1$ the solution is $\rho(t)=\sqrt{1-t^2}$, which means that $(-1)$-surfaces are round spheres. 
\end{example}

\begin{figure}[htp]
\centering
\includegraphics[width=0.3\textwidth]{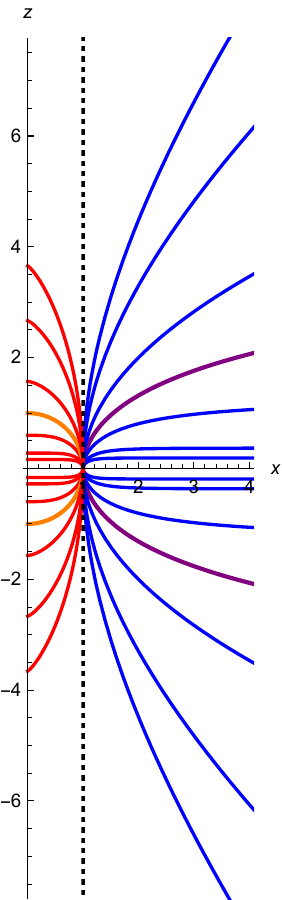}
\caption{Profiles of $\alpha$-surfaces for different values of $\alpha$, in the $xz$ plane, $x>0$. Surfaces with $\alpha>0$ are represented in blue (unbounded), surfaces with $\alpha<0$ are represented in red (bounded); the catenoid ($\alpha=1$) is represented in purple, while the sphere ($\alpha=-1$) is represented in orange; as $\alpha\to 0$ the profile approaches the dotted line $x=1$ (the surface approaches an unbounded flat cylinder); as $\alpha\to\pm\infty$ the profile flattens on the $x$-axis (the surface approaches, respectively, to the $xy$-plane minus the unit disk and to the unit disk).}
\label{fig_wsurf}

\end{figure}

In the next theorem we review the geometric properties of the $\alpha$-surfaces: in particular, note the characterization $c)$ which asserts that the $\alpha$-surface is minimal in a weighted sense.

\begin{theorem}\label{geom-asurf} Let $\Sigma$ be a rotation surface. The following are equivalent:
\begin{enumerate}[a)]
\item $\Sigma$ is an $\alpha$-surface.
\item The principal curvatures $\kappa_1,\kappa_2$ are proportional; precisely, if $\kappa_1$ denotes the meridian curvature (which coincides with the curvature of the profile), then one has at every point
$$
\kappa_1+\alpha \kappa_2=0.
$$
\item $\Sigma$ is critical for the functional ${\rm Area}_{\alpha}(\Sigma):= \int_{\Sigma}\rho^{\alpha-1}\,dv$.
\item The function $u=\rho^{\alpha}$ is a $\Delta_A$-harmonic function on $\Sigma$, where $A=\alpha d\theta$ is the harmonic one-form having flux $\alpha$ around any parallel circle. 
\end{enumerate}
\end{theorem}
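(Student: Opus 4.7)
\textbf{Proof plan for Theorem \ref{geom-asurf}.} The approach is to take (a), the ODE $\rho\rho''=\alpha(1+\rho'^2)$, as the central hub and establish each of the equivalences (a)$\iff$(b), (a)$\iff$(c), (a)$\iff$(d) by a direct computation. First I will record the standing geometric data for the parametrization $f(t,\theta)=(\rho(t)\cos\theta,\rho(t)\sin\theta,t)$: the induced metric is $g=(1+\rho'^2)dt^2+\rho^2 d\theta^2$, the area element is $\rho\sqrt{1+\rho'^2}\,dt\,d\theta$, and, with the outward unit normal $N=(1+\rho'^2)^{-1/2}(\cos\theta,\sin\theta,-\rho')$, the second fundamental form is diagonal in the $(t,\theta)$-frame. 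These formulas will be used throughout.

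\textbf{(a)$\iff$(b).} A direct calculation gives the meridian principal curvature
$\kappa_1=\rho''/(1+\rho'^2)^{3/2}$
and the parallel one $\kappa_2=-1/(\rho\sqrt{1+\rho'^2})$. Substituting into $\kappa_1+\alpha\kappa_2=0$ and clearing denominators yields exactly the ODE $\rho\rho''=\alpha(1+\rho'^2)$, so this equivalence is immediate.

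\textbf{(a)$\iff$(c).} Using $dv=\rho\sqrt{1+\rho'^2}\,dt\,d\theta$ the functional becomes
\[
\mathrm{Area}_\alpha(\Sigma)=2\pi\int_{-L}^{L}\rho^{\alpha}\sqrt{1+\rho'^2}\,dt,
\]
with Lagrangian $F(\rho,\rho')=\rho^{\alpha}\sqrt{1+\rho'^2}$. Writing out the Euler--Lagrange equation $\tfrac{d}{dt}F_{\rho'}-F_{\rho}=0$ and simplifying gives, after multiplication by $\rho^{1-\alpha}(1+\rho'^2)^{3/2}$, precisely $\rho\rho''-\alpha(1+\rho'^2)=0$, which is (a). Admissible variations are smooth compactly supported perturbations of the profile $\rho$, which is enough to identify the critical points with solutions of the ODE.

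\textbf{(a)$\iff$(d).} Here I use the formula
$\Delta_A u=\Delta u+|A|^2u+2i\langle du,A\rangle-iu\,\delta A$.
For $A=\alpha\,d\theta$ one checks $\delta A=0$ using the volume form on $\Sigma$, so $A$ is harmonic; furthermore $|A|^2=\alpha^2/\rho^2$ and, since $u=\rho^\alpha$ depends only on $t$, $\langle du,A\rangle=0$. Thus $\Delta_A u=\Delta u+\alpha^2\rho^{\alpha-2}$. Applying the Laplace--Beltrami operator in the above metric to the radial function $\rho^\alpha$ yields
\[
\Delta(\rho^\alpha)=-\frac{\alpha^2\rho^{\alpha-2}\rho'^2}{1+\rho'^2}-\frac{\alpha\,\rho^{\alpha-1}\rho''}{(1+\rho'^2)^2}.
\]
Adding $\alpha^2\rho^{\alpha-2}$ and combining the first two terms with common factor $\alpha^2\rho^{\alpha-2}/(1+\rho'^2)$ gives
\[
\Delta_A(\rho^{\alpha})=\frac{\alpha\,\rho^{\alpha-2}}{(1+\rho'^2)^2}\bigl[\alpha(1+\rho'^2)-\rho\rho''\bigr],
\]
which vanishes identically if and only if $\rho\rho''=\alpha(1+\rho'^2)$. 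This closes the chain of equivalences.

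\textbf{Main obstacle.} None of the three equivalences is conceptually hard; each reduces to a careful but elementary calculation. The most error-prone step is the derivation of $\Delta_A(\rho^\alpha)$ in (d), because one must keep track of several cancellations involving powers of $(1+\rho'^2)$; I would therefore be careful to verify that the harmonicity of $A=\alpha\,d\theta$ indeed eliminates both the cross term $\langle du,A\rangle$ and the term $u\,\delta A$, and that the $|A|^2u$ contribution is what produces, together with $\Delta u$, the clean factorization above. Once this identity is in hand, all four characterizations collapse to the single ODE defining $\alpha$-surfaces.
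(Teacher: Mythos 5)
Your computations check out and all three equivalences are established correctly, but your route differs from the paper's in a substantive way for parts c) and d). For c) the paper does not write the Euler--Lagrange equation of the profile; it invokes the general criterion that a surface is stationary for $\int_\Sigma e^{-f}\,dv$ iff the weighted mean curvature $H_f=H+\partial f/\partial N$ vanishes, and computes $H_f=\rho^{-1}(1+\rho'^2)^{-3/2}\big(-\rho\rho''+\alpha(1+\rho'^2)\big)$ directly. For d) the paper works extrinsically: it shows $\rho^\alpha$ is $\bar\Delta_A$-harmonic in all of $\mathbb R^3$ for $A=\frac{\alpha}{\rho^2}(-y\,dx+x\,dy)$ and then uses the decomposition $\bar\Delta_Au=\Delta_Au-\bar\nabla^2u(N,N)+H\langle\bar\nabla u,N\rangle$ to obtain $\Delta_A(\rho^\alpha)=\alpha\rho^{\alpha-1}\langle\bar\nabla\rho,N\rangle\,H_f$, so that c) and d) are linked through $H_f$ without ever computing the surface Laplacian in coordinates. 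Your intrinsic computation of $\Delta_A(\rho^\alpha)$ in the metric $(1+\rho'^2)dt^2+\rho^2d\theta^2$ arrives at the same factorization $\frac{\alpha\rho^{\alpha-2}}{(1+\rho'^2)^2}\big[\alpha(1+\rho'^2)-\rho\rho''\big]$ and is entirely correct (I verified the coefficient of each term, including $|A|^2=\alpha^2/\rho^2$, $\delta A=0$ and $\langle du,A\rangle=0$); it is more elementary and self-contained, at the price of losing the conceptual identity $\Delta_A(\rho^\alpha)\propto H_f$ that makes the c)$\iff$d) equivalence transparent and that the paper reuses elsewhere (e.g.\ in Theorem \ref{geom_car_asurf}).

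One small point to patch in the implication a)$\Rightarrow$c): your admissible variations are compactly supported perturbations of the profile, i.e.\ rotationally invariant variations, so the Euler--Lagrange equation a priori only gives criticality within the class of rotation surfaces, whereas c) asserts criticality of $\mathrm{Area}_\alpha$ under arbitrary variations. The gap is closed by one remark: the first variation under a normal speed $\phi$ is $\pm\int_\Sigma H_f\,\phi\,\rho^{\alpha-1}dv$, and $H_f$ is $\theta$-independent, so its vanishing against all rotationally invariant $\phi$ forces $H_f\equiv 0$ and hence criticality under all variations (this is the symmetric-criticality principle). The paper avoids the issue entirely by working with $H_f$ from the start. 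Finally, note that your choice of unit normal is opposite to the paper's, which flips the signs of both $\kappa_1$ and $\kappa_2$; the relation $\kappa_1+\alpha\kappa_2=0$ is unaffected, so this is harmless.
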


The equivalence between a) and b) was proved in \cite{collharr}. The equivalence between a), c) and d) appears to be new.  In particular, the equivalence between a) and c) extends Euler's theorem (stating that the only minimal rotation surface is a catenoidal slab) to rotation surfaces which are stationary for the weighted area ${\rm Area}_{\alpha}(\Sigma)$ (Euler's theorem being the case $\alpha=1$). 

The equivalence between a) and d) has further spectral consequences, see  Section \ref{further} below. 

We prove the theorem in Appendix \ref{app:6}. We note that the function $\rho^{\alpha}$ is $\bar\Delta_A$-harmonic in $\mathbb R^3$, where $\bar\Delta_A$ denotes the magnetic Laplacian in $\mathbb R^3$ and $A$ is the potential one-form in $\mathbb R^3$ given by $A=\frac{\alpha}{\rho^2}(-ydx+xdy)$: in fact, this specific $A$ restricts to $\alpha d\theta$ on $\Sigma$.

\smallskip

Observe also that when $\alpha=1$ the weighted area in c) is the usual area, and in fact $1$-surfaces (i.e. catenoids) are minimal in the classical sense.

\subsection{Critical $\alpha$-surface and the first eigenvalue}

We now restrict to the case $\alpha>0$, so that the profile of the corresponding $\alpha$-surface is convex. We conduct the line from the origin in the $xz$-plane ($x,z>0$) which is tangent to the profile: the contact point $(T,0,L)$ is unique  and finite by the convexity of the profile and by the fact that $(\rho^{-1})'(x):(1,+\infty)\to\mathbb R$ decreases from $+\infty$ to $0$ (see e.g., \cite{collharr}, see also the proof of Lemma \ref{l40}). Therefore the surface obtained by rotation of the graph above the interval $[-L,L]$ is free boundary in the ball of radius $R=\sqrt{T^2+L^2}$.

\begin{defi}[Critical $\alpha$-surface] Let $\alpha,T,L$ be as above, and let $\Sigma$ be the $\alpha$ surface obtained by rotating the graph above the interval $[-L,L]$. Scaling to the unit ball, we get a unique free boundary $\alpha$-surface. We call it the {\it critical $\alpha$-surface}.
\end{defi}

\begin{example} The critical $1$-surface is just the critical catenoid $\Sigma_c$. 
\end{example}

It is not difficult to see that the family of critical $\alpha$-surfaces for $\alpha>0$ foliate the unit $3$-ball minus the segment $S$ joining the two poles, and as $\alpha\to 0$ degenerates to $S$, see Figure \ref{fig_fol0}.

\begin{figure}[htp]
\centering
\includegraphics[width=0.5\textwidth]{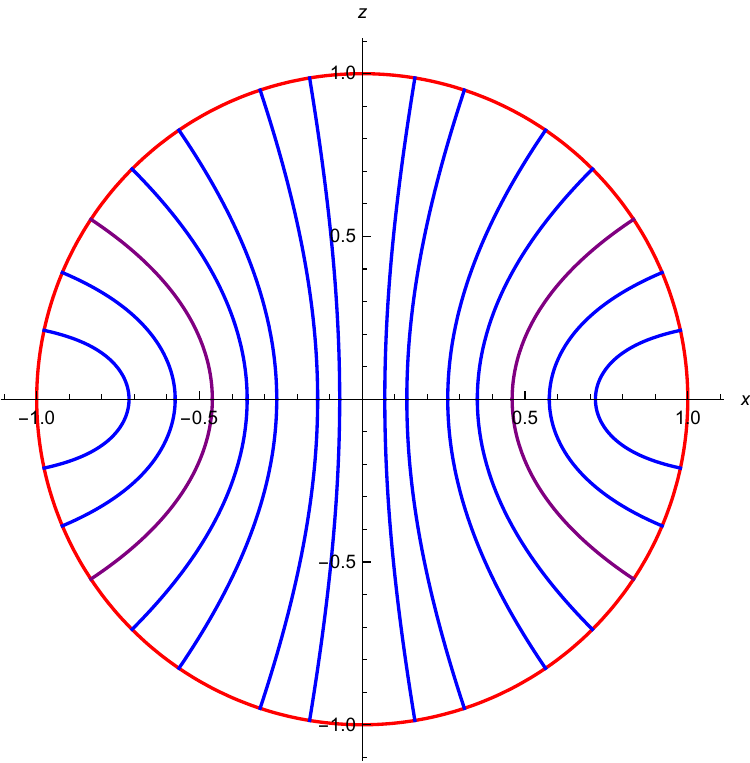}
\caption{Sections on the $xz$-plane of critical $\alpha$-surfaces. In purple the section corresponding the critical catenoid, i.e., $\alpha=1$.}
\label{fig_fol0}
\end{figure}

Let $M(\alpha)$ be the conformal modulus of the critical $\alpha$-surface. We have the following monotonicity property of $M(\alpha)$: 

\begin{prop}\label{Madecr} The function $M:(0,\infty)\to (0,\infty)$ is decreasing from $+\infty$ to $0$, that is:
$$
\lim_{\alpha\to 0^+}M(\alpha)=+\infty, \quad \lim_{\alpha\to+\infty}M(\alpha)=0.
$$
The proof is presented in Appendix \ref{app:6}. Therefore, given $M>0$ there is a unique critical $\alpha$-surface with conformal modulus $M$.
\end{prop}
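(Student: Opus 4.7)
The plan is to reduce the proposition to an implicit equation for a single scalar quantity, prove strict monotonicity by differentiating that equation, and then obtain the two limits by asymptotic analysis.

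First I set up the implicit equation. Integrating $\rho\rho''=\alpha(1+\rho'^2)$ with $\rho(0)=1$, $\rho'(0)=0$ gives the first integral $(\rho')^2=\rho^{2\alpha}-1$. The parametrization $f(t,\theta)=(\rho(t)\cos\theta,\rho(t)\sin\theta,t)$ has induced metric $\rho^{2\alpha}\,dt^2+\rho^2\,d\theta^2$, which after the change of variable $ds=\rho^{\alpha-1}\,dt$ becomes $\rho^2(ds^2+d\theta^2)$. Hence the conformal modulus of the unscaled $\alpha$-surface over $[-L,L]$ is
\[ M(\alpha)=\int_0^L\rho^{\alpha-1}\,dt=\int_1^T\frac{\rho^{\alpha-1}\,d\rho}{\sqrt{\rho^{2\alpha}-1}}=\frac{1}{\alpha}\operatorname{arccosh}(T^\alpha), \]
while the tangent condition from the origin reads $T^\alpha=R/L$ with $R=\sqrt{T^2+L^2}$. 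Setting $y(\alpha):=\alpha M(\alpha)$, so $\cosh y=T^\alpha$ and $\sinh y=T/L$, and substituting $\rho^\alpha=\cosh s$ in the integral $L=\int_1^T d\rho/\sqrt{\rho^{2\alpha}-1}$, one arrives (writing $\beta:=1/\alpha$) at the defining implicit equation
\[ \Phi(\alpha,y):=\sinh y\int_0^y(\cosh s)^{\beta-1}\,ds-\alpha(\cosh y)^\beta=0. \]

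Next I prove strict monotonicity by implicit differentiation. Using $\Phi=0$ to substitute $\int_0^y(\cosh s)^{\beta-1}\,ds=\alpha(\cosh y)^\beta/\sinh y$, two terms in $\Phi_y$ cancel and one finds
\[ \Phi_y=\frac{\alpha(\cosh y)^{\beta+1}}{\sinh y}, \qquad \alpha y'(\alpha)=\frac{\beta^2\sinh^2 y}{(\cosh y)^{\beta+1}}\int_0^y(\cosh s)^{\beta-1}\log\cosh s\,ds+\tanh y-\beta\tanh y\log\cosh y. \]
Since $M(\alpha)=y(\alpha)/\alpha$, monotonicity $M'(\alpha)<0$ is equivalent to $\alpha y'(\alpha)<y$. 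The key step is the trivial bound $\log\cosh s\le\log\cosh y$ on $[0,y]$ combined with a second application of $\Phi=0$, which yields
\[ \int_0^y(\cosh s)^{\beta-1}\log\cosh s\,ds\le\frac{\alpha(\cosh y)^\beta\log\cosh y}{\sinh y}. \]
Consequently the first term in the expression for $\alpha y'(\alpha)$ is at most $\beta\tanh y\log\cosh y$ and exactly cancels the third. One is left with $\alpha y'(\alpha)\le\tanh y<y$ for $y>0$, the required strict inequality.

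Finally I compute the two limits by re-parametrizing with the angle $\phi\in(0,\pi/2)$ defined by $\sec\phi=\cosh y=T^\alpha$ (geometrically the angle at the origin between the $z$-axis and the ray through the tangent point); the substitution $\rho^\alpha=\sec\psi$ rewrites the constraint as
\[ \beta\tan\phi\int_0^\phi\sec^\beta\psi\,d\psi=\sec^\beta\phi. \]
As $\alpha\to\infty$ ($\beta\to 0$) the integrand tends to $1$ and the constraint reduces at leading order to $\phi\tan\phi\sim\alpha$, forcing $\pi/2-\phi\sim\pi/(2\alpha)$; then $y=\log(\sec\phi+\tan\phi)\sim\log\alpha$ and $M=y/\alpha\to 0$. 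As $\alpha\to 0^+$, the small-$\phi$ expansion $\sec^\beta\psi\approx e^{\beta\psi^2/2}$ together with the rescaling $z:=\phi\sqrt\beta$ reduces the constraint to $zF(z)=e^{z^2/2}$ with $F(z):=\int_0^z e^{t^2/2}\,dt$. Since the known asymptotic $zF(z)/e^{z^2/2}\to 1$ as $z\to\infty$ shows that this leading-order equation admits no finite solution, the subleading corrections force $z\to\infty$, and then $M=\phi\beta=z\sqrt\beta\to\infty$.

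The main obstacle is the monotonicity bound, since the integral of $\log\cosh s$ appearing in $y'(\alpha)$ has no a priori size relation with $y$. The decisive trick is to use $\Phi=0$ a second time to express this integral in closed form, which produces the exact cancellation with the term $\beta\tanh y\log\cosh y$ and reduces matters to the elementary inequality $\tanh y<y$. The $\alpha\to 0$ limit is slightly delicate too, since the leading-order Laplace expansion of the constraint is trivially satisfied and one must keep track of the subleading balance to see that the natural scaling variable $z=\phi\sqrt\beta$ is driven to infinity.
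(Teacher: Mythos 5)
Your overall strategy differs from the paper's, and the core of it (monotonicity) is correct. The paper first proves, in a separate lemma, that the boundary radius $R(\alpha)$ of the free-boundary slice is decreasing with $\lim_{\alpha\to\infty}R(\alpha)=1$, derives the identity $\cosh(\alpha M(\alpha))=R(\alpha)^{\alpha}$ by a spectral comparison between the critical $\alpha$-surface and the flat cylinder, and then differentiates $M(\alpha)=\tfrac1\alpha\operatorname{arctanh}\bigl(\sqrt{1-R(\alpha)^{-2\alpha}}\bigr)$, combining $R'\le0$ with the elementary inequality $\log x>\sqrt{1-x^{-2}}\,\operatorname{arctanh}\bigl(\sqrt{1-x^{-2}}\bigr)$. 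You instead compute the conformal modulus directly as $\tfrac1\alpha\operatorname{arccosh}(T^{\alpha})$ and encode the tangency condition in the single equation $\Phi(\alpha,y)=0$ for $y=\alpha M(\alpha)$. I verified the formulas for $\Phi_y$ and $\Phi_\alpha$ and the double use of $\Phi=0$ (once to simplify $\Phi_y$, once to bound the $\log\cosh$ integral): the cancellation is exact and the argument reduces to $\tanh y<y$. This is self-contained and arguably cleaner, since it bypasses the auxiliary monotonicity of $R(\alpha)$ entirely.

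The limits are where the proposal is weak. The $\alpha\to+\infty$ analysis is circular as written: asserting that the constraint ``reduces at leading order to $\phi\tan\phi\sim\alpha$'' presupposes $\sec^{\beta}\phi\to1$, i.e.\ $y=o(\alpha)$, which is essentially the conclusion. It is repairable (from $\sec^{\beta}\phi\ge\beta\phi\tan\phi$ and $\phi\to\pi/2$ one gets $(\beta-1)\log\sec\phi\ge\log(\beta/2)$, hence $\log\sec\phi\le\tfrac{\alpha}{\alpha-1}\log(2\alpha)$ and $M\to0$), but you must close this loop. More seriously, the $\alpha\to0^{+}$ step rests on a false claim: the limiting equation $zF(z)=e^{z^2/2}$ with $F(z)=\int_0^z e^{t^2/2}\,dt$ \emph{does} have a unique finite positive root. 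Indeed $H(z):=e^{z^2/2}-zF(z)$ satisfies $H(0)=1$ and $H'(z)=-F(z)<0$, and since $zF(z)=e^{z^2/2}\bigl(1+z^{-2}+O(z^{-4})\bigr)$ approaches its asymptote from above, $H(z)\to-\infty$; the root sits near $z\approx1.3$. So the subleading corrections do not force $z\to\infty$; rather $z$ tends to this finite root. Your conclusion survives for a different reason: $M\approx z\sqrt{\beta}\to+\infty$ as soon as $z$ stays bounded away from $0$, and $z\to0$ is excluded because the constraint would then degenerate to $0=1$. As written, though, the justification of $\lim_{\alpha\to0^{+}}M(\alpha)=+\infty$ is incorrect and must be replaced by this (easy) boundedness-below argument.
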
 

As a consequence, if $\Sigma$ is the critical $\alpha$-surface with conformal modulus $M$, then the function $\alpha=\alpha(M)$ satisfies:
$$
\lim_{M\to 0^+}\alpha(M)=+\infty, \quad \lim_{M\to +\infty}\alpha(M)=0.
$$
We deduce that the critical $\alpha$-surfaces are noteworthy maximisers for the first normalized magnetic eigenvalue. 

\begin{theorem}\label{asurf-s1} Let $\Sigma$ be an annulus with conformal modulus $M$. Then, for all $\nu$ one has:
$$
\bar\sigma_1(\Sigma,\nu)\leq \bar\sigma_1(\Sigma(\alpha),\nu),
$$
where $\Sigma(\alpha)$ is the unique critical $\alpha$-surface with conformal modulus $M$. 
\end{theorem}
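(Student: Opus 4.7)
\textbf{Proof proposal for Theorem \ref{asurf-s1}.} The plan is to recognise this as a direct corollary of the sharp bound in Theorem \ref{max_s1} combined with the fact that critical $\alpha$-surfaces sit in the equality case. By the periodicity and $\nu\mapsto-\nu$ invariance of the magnetic Steklov spectrum, it suffices to establish the inequality for $\nu\in[0,\tfrac12]$; the general case follows by replacing $\nu$ with $\min_{n\in\mathbb Z}|n-\nu|$ as in the remark after Theorem \ref{max_s1}.

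First, I would invoke Theorem \ref{max_s1}: for any annulus $\Sigma$ of conformal modulus $M$ and any $\nu\in[0,\tfrac12]$,
\[
\bar\sigma_1(\Sigma,\nu)\;\leq\; 4\pi\nu\tanh(\nu M).
\]
Next, I would identify the right-hand side with $\bar\sigma_1(\Sigma(\alpha),\nu)$, where $\Sigma(\alpha)$ is the critical $\alpha$-surface with conformal modulus $M$. This surface is guaranteed to exist and be unique by Proposition \ref{Madecr}, and by its construction as a rotation surface whose profile $\rho$ solves \eqref{asurf-ODE} with initial data $\rho(0)=1$, $\rho'(0)=0$, the profile is an even function on $[-L,L]$. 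Hence $\Sigma(\alpha)$ is a rotationally invariant, \emph{symmetric} annulus of conformal modulus $M$.

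By Proposition \ref{RIS-sigmah}, $\Sigma(\alpha)$ has the same normalized Steklov spectrum as the flat cylinder $\Gamma_M=[-M,M]\times\mathbb S^1$, so in particular
\[
\bar\sigma_1(\Sigma(\alpha),\nu)\;=\;\bar\sigma_1(\Gamma_M,\nu)\;=\;4\pi\nu\tanh(\nu M),
\]
using the explicit computation from Proposition 10 (or directly the formula $\bar\sigma_{10}=4\pi\nu\tanh(\nu M)$). Combining these two identities gives $\bar\sigma_1(\Sigma,\nu)\leq\bar\sigma_1(\Sigma(\alpha),\nu)$, which is the claim.

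The substantive content of the theorem is really contained in the earlier results, so there is no genuine obstacle here; the only points that need to be checked carefully are (i) that the critical $\alpha$-surface chosen by the recipe is indeed \emph{symmetric} — which is immediate from the even initial data for the profile ODE, so that the reflection $t\mapsto -t$ is an isometry — and (ii) that its conformal modulus is $M$ by the very definition of $\alpha=\alpha(M)$ furnished by Proposition \ref{Madecr}. Once these are in place, the argument reduces to chaining together Proposition \ref{Madecr}, Proposition \ref{RIS-sigmah}, and Theorem \ref{max_s1}.
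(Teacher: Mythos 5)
Your proposal is correct and is essentially identical to the paper's own argument: the inequality is Theorem \ref{max_s1}, and the right-hand side is identified with $\bar\sigma_1(\Sigma(\alpha),\nu)$ because the critical $\alpha$-surface of conformal modulus $M$ (unique by Proposition \ref{Madecr}) is a rotationally invariant symmetric annulus, hence $\sigma$-homothetic to $\Gamma_M$ and realizing $\bar\sigma_1=4\pi\nu\tanh(\nu M)$. The extra care you take over the symmetry of the profile and the reduction of general $\nu$ to $[0,\tfrac12]$ is consistent with, and slightly more explicit than, what the paper records.
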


This is a consequence of Theorem \ref{max_s1}. In fact $\Sigma(\alpha)$ is $\sigma$-homothetic to $\Gamma_M=[-M,M]\times\mathbb S^1$ and hence realizes the equality  $\bar\sigma_1(\Sigma(\alpha),\nu)= \bar\sigma_1(\Gamma_M,\nu)=4\pi\nu\tanh(\nu M)$.

\subsection{A further property of critical $\alpha$-surfaces}\label{further}

The following fact is simple to prove, but plays an important role in the work of Fraser and Schoen \cite{fs1,fs2} on the Steklov problem.

\begin{theorem} Let $\Sigma$ be a surface contained in the unit ball $B\subset\mathbb R^3$ having boundary $\partial\Sigma\subset\partial B$.  Then, $\Sigma$ is a minimal surface meeting the boundary of the ball orthogonally if and only if the coordinate functions are eigenfunctions of the (non-magnetic) Steklov problem on $\Sigma$ associated to the eigenvalue $1$.
\end{theorem}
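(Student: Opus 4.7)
The argument rests on two classical pointwise identities, which I would isolate up front. First, for an isometric immersion $X:\Sigma\hookrightarrow\mathbb R^3$, the Laplace–Beltrami operator applied componentwise to the position vector satisfies $\Delta_\Sigma X=c\,\vec H$, where $\vec H$ is the mean curvature vector and $c$ is a nonzero constant depending only on the sign convention. Consequently, each coordinate function $X^i$ is $\Delta_\Sigma$-harmonic on $\Sigma$ if and only if $\vec H\equiv 0$, that is, $\Sigma$ is minimal. Second, for any function $u$ on $\Sigma$, one has $\partial u/\partial N=\langle\nabla_\Sigma u,N\rangle$; applying this with $u=X^i$ and observing that $\nabla_\Sigma X^i$ is the tangential projection of the constant vector $e_i$, one obtains $\partial X^i/\partial N=\langle e_i,N\rangle=N^i$, since $N\in T_p\Sigma$. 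Thus the Steklov boundary condition with $\sigma=1$ for the coordinate functions is equivalent to the vectorial identity $N=X$ on $\partial\Sigma$.

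For the forward direction, assume $\Sigma$ is minimal and meets $\partial B$ orthogonally. Minimality immediately gives $\Delta_\Sigma X^i=0$. For the boundary condition, at a point $p\in\partial\Sigma\subset\partial B$, orthogonality means that the outward unit normal to $B$ at $p$, which is simply $p$ itself, lies in $T_p\Sigma$. Since $T_p\partial\Sigma\subset T_p\partial B$ and $T_p\partial B\perp p$, the vector $p$ is perpendicular to $T_p\partial\Sigma$ and tangent to $\Sigma$; as $|p|=1=|N|$ and $N$ points out of $\Sigma$ (hence out of $B$), one concludes $N=p=X$ on $\partial\Sigma$. By the pointwise identity above, this yields $\partial X^i/\partial N=X^i$, i.e.\ each $X^i$ is a Steklov eigenfunction with eigenvalue $1$.

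For the converse, assume each $X^i$ satisfies the Steklov problem with eigenvalue $1$. From $\Delta_\Sigma X^i=0$ for $i=1,2,3$ and the identity $\Delta_\Sigma X=c\,\vec H$ we deduce $\vec H=0$, so $\Sigma$ is minimal. From the boundary condition $\partial X^i/\partial N=X^i$ and $\partial X^i/\partial N=N^i$ we get $N=X$ on $\partial\Sigma$. In particular $X\in T_p\Sigma$ at every boundary point, which means that the outward normal to $\partial B$ at $p$ is tangent to $\Sigma$; this is precisely the statement that $\Sigma$ meets $\partial B$ orthogonally.

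The only delicate point, and the place where I would be most careful, is the pointwise identification $N=X$ on $\partial\Sigma$: one must verify both that $N\in T_p\Sigma$ so that the normal component of $e_i$ drops out of the computation $\partial X^i/\partial N=N^i$, and that the outward convention for $N$ matches the outward convention for $X$ (so that the sign is $+1$, not $-1$). Everything else is a direct consequence of the two identities highlighted in the first paragraph.
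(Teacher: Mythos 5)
Your proof is correct and follows the standard argument; the paper does not write out a proof of this particular (non-magnetic) statement, but its proof of the magnetic analogue (Theorem \ref{FSmagnetic} in Appendix \ref{app:7}) uses exactly the same two ingredients you isolate, namely $\Delta X = \vec H$ and the equivalence of the boundary condition $\partial X^i/\partial N = X^i$ with the vector identity $N = X$ on $\partial\Sigma$. Your handling of the sign in $N = X$ (outward conormal versus outward radial direction) is the right point to be careful about and is resolved correctly.
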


The following theorem expresses a similar property for critical $\alpha$-surfaces.

\begin{theorem}\label{geom_car_asurf} A rotation surface $\Sigma$ contained in the unit ball $B\subset\mathbb R^3$ such that $\partial\Sigma\subset\partial B$  is a critical $\alpha$-surface if and only if the function $\rho^{\alpha}$ is a $\Delta_A$-Steklov eigenfunction associated to the eigenvalue $\alpha$.  Here $A=\alpha d\theta$.
\end{theorem}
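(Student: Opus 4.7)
The plan is to split the Steklov eigenfunction condition into its interior and boundary parts. By Theorem \ref{geom-asurf}(d), the interior equation $\Delta_A u = 0$ with $u = \rho^\alpha$ and $A = \alpha d\theta$ is equivalent to $\Sigma$ being an $\alpha$-surface. Hence it suffices to show that on such a surface, contained in $B$ with $\partial\Sigma \subset \partial B$, the boundary condition $d^A u(N) = \alpha u$ is equivalent to $\Sigma$ meeting $\partial B$ orthogonally, i.e.\ to $\Sigma$ being critical.

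To handle the boundary condition I would parametrize $\Sigma$ as the surface of revolution of the graph $x = \rho(t)$ for $t \in [-L,L]$, with induced metric $(1+\rho'^2)\,dt^2 + \rho^2 d\theta^2$. At $t = L$ the conormal to $\partial\Sigma$ in $\Sigma$ is $N = (1+\rho'^2)^{-1/2}\partial_t$. Since $u$ depends only on $t$ and $A = \alpha d\theta$ annihilates $N$, one obtains
\[
d^A u(N) = N(u) = \frac{\alpha\,\rho(L)^{\alpha-1}\rho'(L)}{\sqrt{1+\rho'(L)^2}},
\]
so the Steklov condition at $t=L$ reduces to the single scalar identity $\rho'(L)/\sqrt{1+\rho'(L)^2} = \rho(L)$.

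On the other hand, $\partial\Sigma \subset \partial B$ gives $\rho(L)^2 + L^2 = 1$, and orthogonality at $t = L$ requires the pushed-forward vector $f_{*}N = (\rho'\cos\theta, \rho'\sin\theta, 1)/\sqrt{1+\rho'^2}$ to be parallel to the radial vector $(\rho(L)\cos\theta, \rho(L)\sin\theta, L)$, i.e.\ $\rho'(L)L = \rho(L)$. Combining $\rho'(L)L = \rho(L)$ with $\rho(L)^2 + L^2 = 1$ yields $\sqrt{1+\rho'(L)^2} = 1/L$, from which the Steklov identity follows at once; conversely, squaring the Steklov identity and substituting $L^2 = 1 - \rho(L)^2$ gives $\rho'(L)^2 L^2 = \rho(L)^2$, and picking the positive root (legitimate since the profile of an $\alpha$-surface with $\alpha > 0$ is strictly convex, so $\rho'(L) > 0$) gives $\rho'(L)L = \rho(L)$. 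The analogous equivalence at $t = -L$ is automatic after noting that the $\alpha$-surface ODE $\rho\rho'' = \alpha(1+\rho'^2)$, being autonomous and time-reversible, together with the constraint $\rho(-L) = \rho(L)$ forced by $\partial\Sigma \subset \partial B$, forces the profile to be symmetric about its (unique) minimum, which must then lie at $t = 0$.

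The main obstacle is essentially bookkeeping: keeping sign conventions consistent when transferring between $\rho'/\sqrt{1+\rho'^2}$ and $\rho'L$, and using the profile-symmetry argument to dispose of the second boundary component at $t=-L$. No deeper difficulty arises once Theorem \ref{geom-asurf}(d) has absorbed the interior equation into the $\alpha$-surface ODE.
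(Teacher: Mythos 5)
Your proof is correct and follows the same decomposition as the paper: both arguments outsource the interior equation $\Delta_A\rho^\alpha=0$ to the equivalence a)$\Leftrightarrow$d) of Theorem \ref{geom-asurf}, and then show that the boundary condition $d^A\rho^\alpha(N)=\alpha\rho^\alpha$ is equivalent to orthogonality of $\Sigma$ and $\partial B$. Where you differ is in how that boundary equivalence is verified. The paper works invariantly: it writes $d^A\rho^\alpha(N)=\alpha\rho^{\alpha-1}\langle\bar\nabla\rho,N\rangle$ with the ambient gradient $\bar\nabla\rho$, notes that orthogonality means $N=X$ (the position vector), and in the converse direction must argue that $\langle\bar\nabla\rho,X\rangle=\langle\bar\nabla\rho,N\rangle$ forces $X=N$ via a small linear-algebra lemma about unit vectors in the profile plane. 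You instead reduce everything to the scalar identities $\rho'(L)/\sqrt{1+\rho'(L)^2}=\rho(L)$ (Steklov) and $\rho'(L)L=\rho(L)$ (orthogonality) and check directly that they are equivalent under $\rho(L)^2+L^2=1$; this replaces the paper's geometric lemma by elementary algebra, at the cost of the sign and symmetry bookkeeping at $t=-L$, which you handle correctly (convexity of the profile for $\alpha>0$, time-reversibility of the autonomous ODE, and $\rho(-L)=\rho(L)$ forced by $\partial\Sigma\subset\partial B$). One tiny simplification: you do not need to square the Steklov identity and then select a root, since $\rho'(L)/\sqrt{1+\rho'(L)^2}=\rho(L)>0$ already forces $\rho'(L)>0$ directly. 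Both routes are sound.
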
 
The proof is presented in Appendix \ref{app:6}.
Again, note that the considered magnetic potential $A$ on $\Sigma$ is the restriction of the one-form $A=\frac{\alpha}{\rho^2}(-ydx+xdy)$ in $\mathbb R^3$, and $\rho^{\alpha}$ is $\bar\Delta_A$-harmonic in $\mathbb R^3$.

%%%

\section{Geometry of maximisers: second eigenvalue} \label{geom2}

Theorem \ref{max_RI} states that among all rotationally invariant annuli $\Sigma$, the second normalized eigenvalue $\bar\sigma_2(\Sigma,\nu)$ has a maximum: $\sigma_2^*(\nu)=4\pi(1-\nu)\tanh((1-\nu)M^*)=4\pi\nu\coth(\nu M^*)$ where $M^*=M^*(\nu)$ is the unique positive solution to $(1-\nu)\tanh((1-\nu)x)=\nu\coth(\nu x)$. We now discuss maximisers for this object. This should be considered as being the natural generalization of the results of Fraser and Schoen \cite{fs1} in the non-magnetic case. We assume $\nu\in (0,\frac 12)$: the case $\nu=0$ has been already treated in \cite{fs1}, while in the case $\nu=\frac 12$ we have already seen that there are no maximisers in the class or rotationally invariant annuli.

\medskip

We recall that the second eigenvalue $\sigma_2(\Sigma,\nu)$ of the cylinder $\Gamma_{M^*}=[-M^*,M^*]\times\mathbb S^1$ having conformal modulus $M^*=M^*(\nu)$  has multiplicity two, with associated eigenspace being spanned by the eigenfunctions, in coordinates $(t,\theta)\in[-M^*,M^*]\times\mathbb S^1$,
$$
u_1(t,\theta)=\cosh((1-\nu)t) e^{i\theta}, \quad u_2(t,\theta)=\sinh(\nu t)
$$
(recall that $\Gamma_{M^*}$ is in fact a maximiser of $\sigma_2(\Sigma,\nu)$).
The first eigenfunction is complex-valued, the second one is real-valued. We normalize $u_1$ and $u_2$ so that the embedding
$F=(u_1,u_2):\Gamma_{M^*}\to\mathbb C\times\mathbb R\subset\mathbb C\times\mathbb C$ maps $\bd\Sigma$ to $\bd B$, where $B$ is the unit ball in $\mathbb C\times\mathbb R$, which we naturally identify with $\mathbb R^3$.  This is accomplished by taking
\begin{equation}\label{aimmersion}
\twosystem
{u_1(t,\theta)=\frac{a}{1-\nu}\cosh((1-\nu)t)e^{i\theta}}
{u_2(t,\theta)=\frac{a}{\nu}\sinh(\nu t)}
\end{equation}
where $a=a(\nu)$ is given by
$$
a=\left(\frac{\cosh^2((1-\nu)M^*)}{(1-\nu)^2}+\frac{\sinh^2(\nu)M^*)}{\nu^2}\right)^{-\frac 12}.
$$
Having done that,
we have to choose the metric $g=g_{\nu}$ on the surface $\Gamma_{M^*}$.

To that end, let $h(\cdot,\cdot)$ denote the Hermitian inner product on $\mathbb C\times\mathbb C$; identifying $\mathbb C\times\mathbb C$ with $\mathbb R^4$, we see that  the Euclidean inner product is then $\scal{X}{Y}=\mathcal R( h(X,Y))$. Given the smooth embedding $F=(F_1,F_2):\Gamma_{M^*}\to \mathbb C\times\mathbb C$, we now pull-back the Euclidean metric using the magnetic differential $d^A$ instead of the usual differential $d$. Explicitly, we define the metric $g_{\nu}$ on $\Gamma_{M^*}$ as follows:
\begin{equation}\label{newmetric}
g_{\nu}(X,Y)=\Re (h(d^A F(X),d^AF(X))
\end{equation}
where $A=\frac{\nu}{x^2+y^2}(-ydx+xdy)$ is the potential form with flux $\nu$, as usual. Here we are using coordinates $(x+iy,z)\in\mathbb C\times\mathbb R\subset\mathbb C\times\mathbb C$. 
Summarizing, we have the following properties:
\color{black}

\begin{theorem}\label{FSs2} Let $\Gamma_{M^*}=[-M^*(\nu),M^*(\nu)]\times \mathbb S^1$ with the metric $g_{\nu}$ defined in \eqref{newmetric}, and let $F=(u_1,u_2)$ be the immersion of $\Gamma_{M^*}$ into $\mathbb C\times\mathbb R$ defined by \eqref{aimmersion}. Then, for all $\nu\in(0,\frac 12)$:

\begin{enumerate}[a)]
\item  $g_{\nu}$ is conformal to the flat metric; precisely, $g_{\nu}=\psi(t)(dt^2+d\theta^2)$
where 
$$
\psi(t)=a^2\Big(\sinh^2((1-\nu)t)+\cosh^2(\nu t)\Big)=a^2\Big(\cosh^2((1-\nu)t)+\sinh^2(\nu t)\Big)
$$
\item The immersion $F:\Gamma_{M^*}\to\mathbb C\times\mathbb R$ is free boundary in the unit ball.
\item For all rotationally invariant annuli $\Sigma$ one has 
$$
\bar\sigma_2(\Sigma,\nu)\leq \bar\sigma_2(\Gamma_{M^*},\nu)\leq\bar\sigma_2(\Sigma_c,0)=\frac{4\pi}{M_0}
$$
where $\Sigma_c$ is the critical catenoid (with conformal modulus $M_0\approx 1.2$). 
\end{enumerate}
\end{theorem}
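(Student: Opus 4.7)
The plan is to verify the three items in order; the computational weight is in (a), and (b), (c) then follow almost mechanically from it, the defining equation of $M^*$, and the earlier lemmas of Section \ref{sec:bound:second}.

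For (a), I would work in the gauge $A=\nu d\theta$ on $\Gamma_{M^*}$, so that $d^A u_j = du_j - i\nu u_j\,d\theta$. The factors $1/(1-\nu)$ and $1/\nu$ appearing in \eqref{aimmersion} are engineered so that, after cancellation with the magnetic correction term, one obtains
\[
d^A u_1(\partial_t) = a\sinh((1-\nu)t)e^{i\theta}, \qquad d^A u_1(\partial_\theta) = ia\cosh((1-\nu)t)e^{i\theta},
\]
\[
d^A u_2(\partial_t) = a\cosh(\nu t), \qquad d^A u_2(\partial_\theta) = -ia\sinh(\nu t).
\]
The Hermitian cross-term $h(d^A F(\partial_t), d^A F(\partial_\theta))$ is then purely imaginary, so $g_\nu$ is diagonal, and the two diagonal entries agree by $\cosh^2 x - \sinh^2 x = 1$ (applied with $x=(1-\nu)t$ and $x=\nu t$). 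This yields both forms of $\psi(t)$. Since $\psi(-t)=\psi(t)$, the annulus $(\Gamma_{M^*}, g_\nu)$ is rotationally invariant and \emph{symmetric} of conformal modulus $M^*(\nu)$.

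For (b), the calibration $a(\nu)$ is chosen precisely so that
\[
|F(\pm M^*, \theta)|^2 = a^2\left(\frac{\cosh^2((1-\nu)M^*)}{(1-\nu)^2} + \frac{\sinh^2(\nu M^*)}{\nu^2}\right) = 1,
\]
placing $F(\partial \Gamma_{M^*})$ on the unit sphere $\partial B \subset \mathbb R^3$. For orthogonality in the ambient $\mathbb R^3$, the angular direction is automatic: $dF(\partial_\theta) = (iu_1, 0)$ gives $\langle dF(\partial_\theta), F\rangle = \Re(i|u_1|^2) = 0$. For the $t$-direction at $t = \pm M^*$, the condition $\partial_t F \parallel F$ reduces to
\[
\frac{\partial_t u_1}{u_1} = (1-\nu)\tanh((1-\nu)t) = \nu\coth(\nu t) = \frac{\partial_t u_2}{u_2},
\]
which holds \emph{exactly because} $M^*$ is by definition the root of $(1-\nu)\tanh((1-\nu)x) = \nu\coth(\nu x)$. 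Hence $F$ is free boundary in the unit ball.

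For (c), the left inequality is Theorem \ref{max_RI}: by part (a) and Proposition \ref{RIS-sigmah}, $(\Gamma_{M^*}, g_\nu)$ shares its normalized spectrum with the flat cylinder $\Gamma_{M^*}$, so $\bar\sigma_2(\Gamma_{M^*}, \nu) = \sigma_2^*(\nu)$ is indeed the maximum of $\bar\sigma_2(\cdot, \nu)$ over rotationally invariant annuli. The right inequality and the equality $\bar\sigma_2(\Sigma_c, 0) = 4\pi/M_0$ are precisely the content of Lemma \ref{s*incr}, which asserts that $\sigma_2^*$ is decreasing on $(0,\tfrac{1}{2})$ with $\lim_{\nu \to 0^+} \sigma_2^*(\nu) = 4\pi/M_0$. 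The only real delicacy is the bookkeeping of the magnetic correction in (a); the pleasing structural feature is the double role of the equation defining $M^*(\nu)$—it simultaneously produces the rotationally invariant maximiser for $\bar\sigma_2(\cdot,\nu)$ and encodes the free-boundary orthogonality condition for the immersion $F$.
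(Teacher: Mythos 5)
Your proposal is correct and follows essentially the same route as the paper: the identical computation of $d^AF(\partial_t)$, $d^AF(\partial_\theta)$ for (a), the same use of the defining equation of $M^*(\nu)$ to verify orthogonality at $t=\pm M^*$ for (b) (you check $\partial_t F\parallel F$ directly where the paper exhibits a normal vector $U$ and shows $\langle F,U\rangle=0$, an equivalent formulation), and the same appeal to Theorem \ref{max_RI} and Lemma \ref{s*incr} for (c). No gaps.
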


Since the metric  $g_{\nu}$ is conformal to the flat one, we get that $\Delta_AF=0$. Recall that for an immersion $F:\Sigma\to\mathbb R^n$, one always has $\Delta F=H$, where $H$ is the mean curvature vector. Then, the following definition makes sense.

\begin{defi}[$A$-minimality] An immersion $F:\Sigma\to\mathbb C\times\mathbb R$ is called {\it $A$-minimal} if $\Delta_AF=0$, where the metric on $\Sigma$ (see \eqref{newmetric}) is the pull-back of the Euclidean metric on $\mathbb C\times \mathbb R$ by the map $F$ using $d^A$.
\end{defi}

We observe that the following result holds, which is the exact analogous of the result proved in \cite{fs1} for the classical Steklov problem.

\begin{theorem}\label{FSmagnetic} Let $\Sigma$ be a rotation surface such that $\Sigma\subset B$, where $B$ is the unit ball in $\mathbb C\times\mathbb R$, and $\partial\Sigma\subset\partial B$. Then, $\Sigma$ is $A$-minimal and meets the boundary of $B$ orthogonally if and only if the coordinate functions of the immersion are $\Delta_A$-Steklov eigenfunctions associated to the eigenvalue $\sigma=1$.
\end{theorem}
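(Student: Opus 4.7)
The theorem is the direct magnetic counterpart of the classical Fraser--Schoen characterization of free boundary minimal surfaces, and the whole argument pivots on a single observation: since the potential $A=\nu\,d\theta$ is rotation-invariant and $\partial\Sigma$ consists of orbits of the $\mathbb S^1$-action, the outer conormal $N$ of $\partial\Sigma$ in $\Sigma$ satisfies $A(N)=0$, so the magnetic Steklov boundary condition $d^AF(N)=\sigma F$ reduces to the classical one $dF(N)=\sigma F$ on $\partial\Sigma$.

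I would begin by fixing rotation-invariant coordinates $(t,\theta)\in[-M,M]\times\mathbb S^1$ in which the immersion reads $F(t,\theta)=(\rho(t)e^{i\theta},z(t))\in\mathbb C\times\mathbb R$ and $A=\nu\,d\theta$. A short computation of $d^AF_1=\rho'e^{i\theta}\,dt+i\rho(1-\nu)e^{i\theta}\,d\theta$ and $d^AF_2=z'\,dt-iz\nu\,d\theta$ shows that the pull-back metric is diagonal and rotation-invariant,
\[
g_\nu=\bigl((\rho')^2+(z')^2\bigr)\,dt^2+\bigl(\rho^2(1-\nu)^2+z^2\nu^2\bigr)\,d\theta^2,
\]
so on $\partial\Sigma=\{t=\pm M\}\times\mathbb S^1$ the outer conormal $N$ is a positive multiple of $\partial_t$. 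Therefore $A(N)=\nu\,d\theta(N)=0$ on $\partial\Sigma$, giving both $d^AF(N)=dF(N)$ there and $|dF(N)|_{\mathrm{euc}}^{2}=h(d^AF(N),d^AF(N))=g_\nu(N,N)=1$.

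For the forward direction, $A$-minimality is by definition $\Delta_AF_j=0$ for each coordinate, giving the interior part of the Steklov equation for free. The orthogonality hypothesis says that at $p\in\partial\Sigma$ one has $|F(p)|=1$ and $F(p)\in T_{F(p)}F(\Sigma)$; since $dF(N)$ is, by the previous step, the outward-pointing Euclidean unit vector of $T_{F(p)}F(\Sigma)$ normal to $T_{F(p)}\partial(F(\Sigma))$, it must coincide with $F(p)$. Hence $d^AF(N)=dF(N)=F$ on $\partial\Sigma$, and each coordinate of $F$ is a $\Delta_A$-Steklov eigenfunction with $\sigma=1$. The converse runs in reverse: the hypothesis gives $\Delta_AF=0$ (i.e.\ $A$-minimality) together with $d^AF(N)=F$ on $\partial\Sigma$; combining with $A(N)=0$ yields $dF(N)=F$, and since $|F|=1$ on $\partial\Sigma$ this forces $F(p)\in T_{F(p)}F(\Sigma)$, which is orthogonal intersection.

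The delicate point I expect is that $F$ is \emph{not} an isometric immersion of $(\Sigma,g_\nu)$ into Euclidean space---only the enhanced differential $d^AF$ is, in the sense that $|d^AF(X)|_{\mathrm{euc}}^{2}=g_\nu(X,X)$. Hence ``$dF(N)\perp_{\mathrm{euc}} T_{F(p)}\partial(F(\Sigma))$'', needed to identify $dF(N)$ with $F(p)$, does not follow from a pure pull-back argument; it must be checked by expanding $g_\nu(N,\partial_\theta)=0$ in Euclidean data, where the only possibly troublesome term is $\nu\,\Im\,h(dF(N),F)$, and then verifying that $h(dF(N),F)=(\rho\rho'+zz')/\sqrt{(\rho')^2+(z')^2}$ is real, so that this imaginary part vanishes. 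Once this rotational-symmetry-induced reality check is in place, the rest of the proof is a line-by-line transcription of the classical Fraser--Schoen argument.
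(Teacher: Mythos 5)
Your proposal is correct and follows essentially the same route as the paper's proof: the pivotal observation in both is that $A(N)=0$ on $\partial\Sigma$ because the boundary is a parallel circle, so $d^A$ reduces to $d$ on the conormal and the classical Fraser--Schoen position-vector argument ($du(X)=u$ for the coordinate functions) goes through verbatim. Your additional check that $\lvert dF(N)\rvert_{\mathrm{euc}}=\sqrt{g_\nu(N,N)}=1$ and that $\Im\,h(dF(N),F)=0$ makes explicit a normalization/orthogonality point that the paper's proof leaves implicit, but it is a refinement of the same argument rather than a different one.
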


The proofs of Theorems \ref{FSs2} and \ref{FSmagnetic} are presented in Appendix \ref{app:7}.

\appendix

\section{Proofs of Section \ref{sec:mag:lap}}\label{app:2}

In this section we prove Proposition \ref{sigmah}. Before showing the proof, we need to recall some preliminaries.

\medskip

Let $(\Sigma,g)$ and $(\Gamma,h)$ be two Riemannian surfaces which admit a conformal diffeomorphism $\phi:(\Sigma,g)\to(\Gamma,h)$ so that $\phi^*h=\psi g$ for a smooth function $\psi$ on $
\Sigma$. It is well-known that if $\Delta_g$ denotes the usual Laplacian associated to the metric $g$ one has, for all smooth functions $u$ on $\Gamma$,  $\Delta_g\phi^*u=\psi\phi^*\Delta_hu$. Here and in what follows $\phi^*u:=u\circ\phi$. In particular, $\phi^*$ takes $\Delta_h$-harmonic functions to $\Delta_g$-harmonic functions. The next lemma states that this is true also in the case of the magnetic Laplacian. We will use the notation $\Delta_{A,h}$ for the magnetic Laplacian on $\Gamma$ with potential one-form $A$ (analogous notation applies for $\Sigma$). We shall denote by $\phi^*A$  the pull-back of a one-form $A$ on $\Gamma$ through $\phi$.

\begin{lemma}\label{lemma_conf} Let $\phi:(\Sigma,g)\to (\Gamma, h)$ be a conformal diffeomorphism between Riemannian surfaces with $\phi^*h=\psi g$. For any one-form $A$ on $\Gamma$ we have:
\begin{enumerate}[a)]
\item
$
\Delta_{\phi^*A,g}(\phi^*u)=\psi\phi^*(\Delta_{A,h}u)
$
\item
$
d^{\phi^*A}\phi^*u=\phi^*d^Au
$
\item If $N_{\Sigma}$ and $N_{\Gamma}$ are the outer unit normals, then $d\phi(N_{\Sigma}(x))=\sqrt{\psi(x)}N_{\Gamma}(\phi(x))$ for all $x\in\bd\Sigma$.
\item If $\phi$ is a $\sigma$-homothety, that is, $\psi\equiv c$ on $\partial\Sigma$ (i.e., it is constant on $\partial\Sigma$), then
$
\abs{\bd\Gamma}=\sqrt{c}\abs{\bd\Sigma}.
$
\end{enumerate}
\end{lemma}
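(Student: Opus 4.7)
The plan is to prove the four items in the order (b), (a), (c), (d), since each uses the previous. Part (b) is immediate from the definition $d^A u = du - iuA$: pullback commutes with $d$ and with pointwise multiplication of forms by functions, giving $\phi^*(d^A u) = d(\phi^* u) - i(\phi^* u)(\phi^* A) = d^{\phi^*A}(\phi^* u)$.

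For part (a) I would factor $\Delta_{A,h} = \delta^A_h \circ d^A$ and isolate the codifferential as the only substantive piece. Because $\phi : (\Sigma, \phi^*h) \to (\Gamma, h)$ is an isometry by construction, pullback commutes with the magnetic codifferential provided the one on $\Sigma$ is computed in the metric $\phi^*h = \psi g$ rather than in $g$. The heart of the argument is then the comparison of the codifferentials with respect to the conformally related metrics $\psi g$ and $g$ on $\Sigma$. A short local-coordinates computation using $\sqrt{|\psi g|} = \psi \sqrt{|g|}$ and $(\psi g)^{ij} = \psi^{-1} g^{ij}$ in dimension two gives $\delta_{\psi g} = \psi^{-1} \delta_g$ on one-forms; a parallel rescaling $X^{\sharp_{\psi g}} = \psi^{-1} X^{\sharp_g}$ shows the magnetic term $i\,\omega(A^{\sharp})$ picks up the same factor $\psi^{-1}$, so altogether $\delta^A_{\psi g} = \psi^{-1} \delta^A_g$ on one-forms. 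Combining this with (b) and the isometry remark, one obtains $\phi^*(\Delta_{A,h} u) = \psi^{-1} \Delta_{\phi^*A, g}(\phi^* u)$, which is (a) after clearing $\psi$.

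For part (c) I would unwind the definition of the outward unit normal. For any $X \in T_x\partial\Sigma$, $h(d\phi(N_\Sigma), d\phi(X)) = (\phi^*h)(N_\Sigma, X) = \psi\, g(N_\Sigma, X) = 0$, so $d\phi(N_\Sigma)$ is $h$-orthogonal to $T_{\phi(x)}\partial\Gamma$; it points outward because $\phi$ is a diffeomorphism of manifolds with boundary; and $|d\phi(N_\Sigma)|^2_h = (\phi^*h)(N_\Sigma, N_\Sigma) = \psi(x)$, giving the length $\sqrt{\psi(x)}$. These three properties force $d\phi(N_\Sigma(x)) = \sqrt{\psi(x)}\, N_\Gamma(\phi(x))$. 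Part (d) is then an immediate consequence of (c): the boundary line element on $\partial\Gamma$ pulls back via $\phi$ to $\sqrt{\psi|_{\partial\Sigma}}$ times the line element on $\partial\Sigma$, which under the $\sigma$-homothety hypothesis $\psi|_{\partial\Sigma} \equiv c$ integrates to $|\partial\Gamma| = \sqrt{c}\,|\partial\Sigma|$.

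The only step of real substance is the transformation rule for the magnetic codifferential used in (a); everything else is essentially a definition. The two-dimensionality of $\Sigma$ enters genuinely through the cancellation $\sqrt{|\psi g|}(\psi g)^{ij} = \sqrt{|g|}\, g^{ij}$, which is why the same scaling would fail in higher dimensions. As an alternative, one could instead derive (a) from the conformal invariance of the magnetic Dirichlet energy recalled in Subsection \ref{conf:inv} by integrating by parts against arbitrary compactly supported test functions, avoiding coordinate calculations altogether.
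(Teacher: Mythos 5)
Your proof is correct. Note that the paper itself does not prove Lemma \ref{lemma_conf}: it is stated in Appendix \ref{app:2} as a recalled preliminary before the proof of Proposition \ref{sigmah}, so there is no argument in the paper to compare against. Your derivation supplies a complete one, and you correctly isolate the only substantive point, namely the conformal rescaling $\delta^A_{\psi g}=\psi^{-1}\delta^A_g$ on one-forms, which rests on the two-dimensional cancellation $\sqrt{\abs{\psi g}}\,(\psi g)^{ij}=\sqrt{\abs{g}}\,g^{ij}$ together with the rescaling of $\sharp$; combined with the naturality of $\Delta_{A,h}$ under the isometry $\phi:(\Sigma,\psi g)\to(\Gamma,h)$ and part b), this gives a) exactly as stated, and parts c) and d) follow from the definitions as you indicate.
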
 

We are now ready to prove Proposition \ref{sigmah}.

\begin{proof}[Proof of Proposition \ref{sigmah}]
Let $(\Sigma,g)$, $(\Gamma,h)$ be two Riemannian surfaces which are $\sigma$-homothetic, i.e., there exists a conformal diffeomorphism $\phi:(\Sigma,g)\to(\Gamma,h)$ which restricts to an homothety on $\partial\Sigma$. Let $N_{\Gamma},N_{\Sigma}$ denote the outer unit normals to $\Gamma,\Sigma$, respectively. Let $A$ be a one-form on $\Gamma$.

\medskip

Let $u$ be a Steklov eigenfunction of $\Delta_A$ on $\Gamma$ associated to the eigenvalue $\sigma_k$. This means: 
$$
\begin{cases}
\Delta_Au=0 & {\rm in\ }\Gamma\\
d^Au(N_{\Gamma})=\sigma_k u & {\rm on\ }\partial\Gamma.
\end{cases}
$$
From point $a)$ of Lemma \ref{lemma_conf}, we deduce that the function $\phi^*u$ is $\Delta_{\phi^*A}$-harmonic on $\Sigma$. Now, by points $b)$ and $c)$ of Lemma \ref{lemma_conf} we get
$$
d^{\phi^*A}(\phi^*u)(N_{\Sigma})=\phi^*d^Au(N_{\Sigma})
=d^Au(d\phi(N_{\Sigma}))
=\sqrt{\psi} d^Au(N_{\Gamma}\circ\phi)
=\sqrt{\psi}\sigma_k \phi^*u.
$$
By assumption, $\sqrt{\psi}$ is constant on the boundary, hence $\sigma'_k:=\sqrt{\psi}\sigma_k$ is a Steklov eigenvalue of $\Delta_{\phi^*A}$ on $\Sigma$. Then, $\phi^*$ induces an isomorphism on each eigenspace, and normalized eigenvalues are preserved, as follows from point $d)$ of Lemma \ref{lemma_conf}:
\begin{equation}\label{ns}
\sigma'_k\abs{\bd\Sigma}=\sqrt{\psi}|\partial\Sigma|\sigma_k=|\partial\Gamma|\sigma_k.
\end{equation} 
We are now ready to complete the proof. Assume now that $\Gamma,\Sigma$ are Riemannian annuli. If $A$ is a closed one-form having flux $\nu$ on $\Gamma$, then $\phi^*A$ is a closed one-form with  flux $\pm\nu$ on $\Sigma$, depending on the orientation; this implies that the the spectrum of the pair $(\Sigma,\phi^*A)$ is just what we denoted $\sigma_k(\Sigma,\nu)$ (remember that the sign of the flux has no effect on the spectrum). 
Then, by \eqref{ns}, $\Sigma$ and $\Gamma$ have the same normalized Steklov spectrum, for all fluxes $\nu$:
$$
\bar\sigma_k(\Sigma, \nu)=\bar\sigma_k(\Gamma,\nu).
$$

\end{proof}

\section{Proofs of Section \ref{sec:bound:first} }\label{app:4}

In this Appendix we prove Theorems \ref{max_s1}, \ref{hear_conf} and \ref{planar}.

\begin{proof}[Proof of Theorem \ref{max_s1}]
If $\nu=0$ (or $\nu\in\mathbb Z$) the first eigenvalue is zero and the result is straightforward. Let then $\nu\in(0,\frac 12]$. By assumption there is a conformal diffeomeorphism 
$
\phi:\Sigma\to \Gamma_M
$
where $\Gamma_M$ is the flat cylinder $[-M,M]\times\mathbb S^1$ with coordinates $(t,\theta)$.  Consider the first eigenfunction corresponding to $\sigma_1(\Gamma_M,\nu)$, which is 
$
u(t)=\cosh(\nu t),
$
and which is real and constant on $\partial\Gamma_M$. Recall that we are considering the potential one-form $A=\nu d\theta$ on $\Gamma_M$. We take as test function for the eigenvalue $\sigma_1(\Sigma,\phi^*A)$ (the first magnetic Steklov eigenvalue with potential $\phi^*A$) the pull-back of $u$ by $\phi$, namely, $\phi^*u:=u\circ\phi$. Then, by the min-max principle \eqref{minmax}:
$$
\sigma_1(\Sigma,\phi^*A)\int_{\partial\Sigma}(\phi^*u)^2ds\leq \int_{\Sigma}
|d^{\phi^*A}(\phi^*u)|^2dv.
$$

Now, the flux is invariant by diffeomorphisms, hence 
$\phi^*A$ has flux $\nu$ around any of the boundary components of $\Sigma$, and moreover $\phi^*A$ is closed (see Lemma \ref{lemma_conf}, $a)$). This means that, by gauge invariance $\sigma_1(\Sigma,\phi^*A)=\sigma_1(\Sigma,\nu)$. By the conformal invariance of the magnetic energy:
\begin{multline*}
\int_{\Sigma}
|d^{\phi^*A}(\phi^*u)|^2dv=\int_{\Gamma_M}|d^Au|^2\,dv
=\sigma_1(\Gamma_M,\nu)\int_{\bd\Gamma_M}u^2\\
=4\pi u(M)^2\sigma_1(\Gamma_M,\nu)
=4\pi u(M)^2\nu\tanh(\nu M).
\end{multline*}
Finally, $\phi^*u|_{\partial\Sigma}=u(M)$ is constant on $\partial\Sigma$, so that $\int_{\partial\Sigma}(\phi^*u)^2ds=|\partial\Sigma|u(M)^2$.
We conclude:
$$
|\partial\Sigma|\sigma_1(\Sigma,\nu)\leq 4\pi\nu\tanh( \nu M),
$$
which is the claimed inequality. 

\medskip

Now assume that equality holds. Let $h$ be the metric on $\Sigma$, and $g$ that on $\Gamma_M$. Since $\phi$ is conformal, we have $\phi^*g=\psi h$ for some smooth $\psi$. It is enough to show that the conformal factor $\psi$ is constant on $\bd\Sigma$. 

\smallskip 

Now, if equality holds, then $\phi^*u$ must be an eigenfunction, and therefore
$$
d(\phi^*u)(N)=\sigma_1 \phi^*u
$$ 
on the boundary of $\Sigma$. The right hand side is constant on $\bd\Sigma$ hence
$$
du(d\phi_x(N))=c
$$
for all $x\in\bd \Sigma$. One has (recall that $N$ is the unit normal vector to $\partial\Sigma$):
$$
g(d\phi_x(N), d\phi_x(N))=\phi^*g_x(N,N)=\psi (x)h(N,N)=\psi (x)
$$
and therefore, since $d\phi_x(N)$ is collinear with the unit normal $N_{\Gamma}$ to $\bd\Gamma$, we have
$$
d\phi_x(N)=\dfrac{1}{\sqrt{\psi(x)}}N_{\Gamma},
$$
so that
$$
c=du(d\phi_x(N))=\dfrac{1}{\sqrt{\psi(x)}}du(N_{\Gamma})=\dfrac{1}{\sqrt{\psi(x)}}\sigma_1(\Gamma,\nu)u(M)=\dfrac{c'}{\sqrt{\psi(x)}},
$$
where $c'$ is a constant: then, $\psi(x)$ is also a constant on $\bd\Sigma$. The proof is complete. 
\end{proof}

\begin{proof}[Proof of Theorem \ref{hear_conf}]
We have already proved in Theorem \ref{max_s1} an upper bound:
$$
\bar\sigma_1(\Sigma,\nu)\leq 4\pi\nu\tanh(\nu M(\Sigma))\leq 4\pi\nu^2 M(\Sigma)
$$
for all $\nu\in[0,\frac 12]$. We show now that $\bar\sigma_1(\Sigma,\nu)\geq 4\pi M(\Sigma)\nu^2+o(\nu^2)$ as $\nu\to 0$. This, along with the upper bound, proves the result.

\medskip

We can assume $\nu\ne 0$ (otherwise the first eigenvalue is zero with constant eigenfunctions). Let $u_{\nu}$ denote a first Steklov eigenfunction  on $\Sigma$ normalized by $\int_{\partial\Sigma}|u_{\nu}|^2ds=1$, for all $\nu\in (0,\frac 12]$. For any $u\in H^1(\Sigma)$ we have (recall that $A$ is a real potential one-form)
$$
2|u|d |u|=d|u|^2=2\Re(\bar u\,d u)=2\Re(\bar u(d u-iAu))=2\Re(\bar u\,d^Au)
$$
and since $|2\Re(\bar u\,d^Au)|\leq 2|u||d^Au|$ we deduce the diamagnetic inequality $|d |u||\leq|d^Au|$. In particular, the family $\{|u_{\nu}|\}_{\nu\in(0,1/2)}$ is uniformly bounded in $H^1(\Sigma)$ endowed with the equivalent norm $\|d u\|_{L^2(\Sigma)}+\| u\|_{L^2(\partial\Sigma)}$. Moreover, $\|d|u_{\nu}|\|\leq\|d^Au_{\nu}\|\to 0$ as $\nu\to 0^+$. Therefore we deduce that $|u_{\nu}|\rightarrow c$, strongly in $H^1(\Sigma)$, and therefore also in $L^2(\partial\Sigma)$. From the normalization on $\|u_{\nu}\|_{L^2(\partial\Sigma)}$ we deduce that $c=\frac{1}{\sqrt{|\partial\Sigma|}}$.

\medskip

Now, let $\phi:\Gamma_M\to\Sigma$ be a conformal diffeomorphism, where $\Gamma_M=[-M,M]\times\mathbb S^1$ and $M=M(\Sigma)$ is the conformal modulus of $\Sigma$.  Consider the test function $\phi^*u_{\nu}:=u_{\nu}\circ\phi$ for the magnetic Steklov problem on $\Gamma_M$, where we choose as potential one-form the form $\phi^*A$: it is closed and has flux $\nu$ (see also Lemma \ref{lemma_conf}  and Proposition \ref{sigmah} in Appendix \ref{app:2}). From the min-max principle \eqref{minmax} we have
$$
\sigma_1(\Gamma_M,\nu)\leq\frac{\int_{C_M}|d^{\phi^*A}\phi^*u_{\nu}|^2 dv}{\int_{\partial \Gamma_M}|\phi^*u_{\nu}|^2ds}=\frac{\int_{\Sigma}|d^Au_{\nu}|^2dv}{\int_{\partial \Gamma_M}|\phi^* u_{\nu}|^2ds}=\frac{\sigma_1(\Sigma,\nu)}{\int_{\partial \Gamma_M}|\phi^* u_{\nu}|^2ds}.
$$
The first equality follows from the conformal invariance of the magnetic energy (see Subsection \ref{conf:inv}). Therefore we have
\begin{multline*}
\sigma_1(\Sigma,\nu)\geq\sigma_1(\Gamma_M,\nu)\int_{\partial \Gamma_M}|\phi^* u_{\nu}|^2ds=\frac{4\pi\sigma_1(\Gamma_M,\nu)}{|\partial\Sigma|}+\sigma_1(\Gamma_M,\nu)\left(\int_{\partial \Gamma_M}|\phi^* u_{\nu}|^2ds-\frac{4\pi}{|\partial\Sigma|}\right)\\
=\frac{4\pi\nu\tanh(\nu M)}{|\partial\Sigma|}+\nu\tanh(\nu M)\left(\int_{\partial \Gamma_M}|\phi^* u_{\nu}|^2ds-\frac{4\pi}{|\partial\Sigma|}\right)\\=\frac{4\pi M \nu^2}{|\partial\Sigma|}+\nu\tanh(\nu M)\left(\int_{\partial \Gamma_M}|\phi^* u_{\nu}|^2ds-\frac{4\pi}{|\partial\Sigma|}\right)+o(\nu^2).
\end{multline*}
The proof is complete once we show that
$$
\lim_{\nu\to 0^+}\left(\int_{\partial \Gamma_M}|\phi^* u_{\nu}|^2ds-\frac{4\pi}{|\partial \Sigma|}\right)=0.
$$
This is proved by recalling that $|u_{\nu}|\to c=\frac{1}{\sqrt{|\partial\Sigma|}}$ in $L^2(\partial\Sigma)$, and therefore
$$
\int_{\partial \Gamma_M}|\phi^* u_{\nu}|^2ds=\int_{\partial\Sigma}|u_{\nu}|^2|d\phi^{-1}|ds\to c^2\int_{\partial\Sigma}|d\phi^{-1}|ds=\frac{|\partial \Gamma_M|}{|\partial\Sigma|}=\frac{4\pi}{|\partial\Sigma|}.
$$

\end{proof}

In order to prove Theorem \ref{planar} we need to recall the following auxiliary result:

\begin{lemma}\label{lemma_bucur}
Let $\Omega$ be a smooth domain in $\mathbb R^n$ and let $w\in L^{\infty}(\partial\Omega)$, $w\geq 1$ be a density on $\partial\Omega$. Then there exists a sequence of domains $\Omega_{\eps}$, $\eps\in(0,\eps_0)$, that converges uniformly to $\Omega$ and such that $\mathcal H|_{\partial\Omega_{\eps}}\rightharpoonup w\mathcal H|_{\partial\Omega}$
weakly* in the sense of measures, where $\mathcal H$ is the $n-1$ dimensional Hausdorff measure. In particular,
$$
|\partial\Omega_{\eps}|\to\int_{\partial\Omega}w\, ds
$$
and
$$
\sigma_k(\Omega_{\eps},\nu)\to\sigma_k^w(\Omega,\nu),
$$
where $\sigma_k^w(\Omega,\nu)$ are the eigenvalues of the weighted Steklov problem \eqref{weigh_AB} on $\Omega$.
\end{lemma}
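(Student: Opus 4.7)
The plan is to construct $\Omega_\epsilon$ by attaching small, carefully tuned oscillations to $\partial\Omega$ so that the pulled-back surface measure of $\partial\Omega_\epsilon$ converges weakly-* to $w\,d\mathcal{H}|_{\partial\Omega}$, and then to derive the spectral convergence from this geometric convergence via the min-max characterization.

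First, I would reduce to the case where $w$ is a simple function $w = \sum_{j=1}^{N} c_j \chi_{B_j}$ with $c_j \geq 1$ and $\{B_j\}$ a finite Borel partition of $\partial\Omega$ by relatively open sets. This is possible because $w \in L^\infty(\partial\Omega)$ with $w \geq 1$ can be approximated uniformly from below by such simple functions, and a diagonal argument at the end handles the passage from piecewise constant weights to general $w$. For the simple-function case, on each piece $B_j$ I would locally flatten $\partial\Omega$ and replace the flat piece by the graph of $x_n = \epsilon\,\eta_j(x'/\epsilon)$, where $\eta_j$ is a fixed periodic Lipschitz function whose amplitude and gradient are tuned so that the intrinsic area factor $\sqrt{1+|\nabla\eta_j|^2}$ averages to $c_j$ over one period (any $c_j \geq 1$ is achievable). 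Small cutoffs near the interfaces $\overline{B_i}\cap\overline{B_j}$, together with a standard mollification, produce a smooth domain $\Omega_\epsilon$ with $\partial\Omega_\epsilon$ at uniform distance $O(\epsilon)$ from $\partial\Omega$.

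Second, the weak-* convergence $\mathcal{H}|_{\partial\Omega_\epsilon}\rightharpoonup w\,\mathcal{H}|_{\partial\Omega}$ is a Riemann-sum computation: for any $\phi \in C(\overline{\Omega})$, a change of variables on each piece gives
\begin{equation*}
\int_{\partial\Omega_\epsilon\cap T_j} \phi\,ds = \int_{B_j} \phi\bigl(c_j + o(1)\bigr)\,ds + O(\epsilon),
\end{equation*}
where $T_j$ is the perturbed neighborhood of $B_j$; summing over $j$ yields $\int_{\partial\Omega}\phi w\,ds$. In particular, $|\partial\Omega_\epsilon|\to\int_{\partial\Omega}w\,ds$.

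Third, the spectral convergence $\sigma_k(\Omega_\epsilon,\nu)\to\sigma_k^w(\Omega,\nu)$ is obtained from two inequalities. For the \emph{upper bound}, I would take eigenfunctions $u_1^w,\dots,u_k^w$ of the weighted problem, extend them smoothly to a fixed neighborhood of $\overline{\Omega}$, restrict them to $\Omega_\epsilon$, and use them in the min-max \eqref{minmax} with potential $A$; the magnetic Dirichlet energy over $\Omega_\epsilon$ converges to that over $\Omega$ since $\Omega_\epsilon\to\Omega$ in Hausdorff distance and the extended functions are uniformly smooth, while the boundary integrals $\int_{\partial\Omega_\epsilon}|u_j^w|^2\,ds$ converge to $\int_{\partial\Omega}|u_j^w|^2 w\,ds$ by weak-* convergence. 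For the \emph{lower bound}, take orthonormal eigenfunctions $u_j^\epsilon$ for the first $k$ magnetic Steklov eigenvalues of $\Omega_\epsilon$; the diamagnetic inequality plus uniform trace estimates provide uniform $H^1$ bounds on a common extension, hence a weakly convergent subsequence $u_j^\epsilon \rightharpoonup u_j$ in $H^1$ of a neighborhood. The weak-* convergence of boundary measures then yields that the limiting $u_j$ are $L^2_w(\partial\Omega)$-orthonormal and span a $k$-dimensional test space for the weighted problem, giving $\liminf_\epsilon \sigma_k(\Omega_\epsilon,\nu)\geq \sigma_k^w(\Omega,\nu)$.

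The main obstacle is the \emph{strong} convergence of boundary traces needed in the lower-bound step: proving that $\int_{\partial\Omega_\epsilon}|u_j^\epsilon|^2\,ds \to \int_{\partial\Omega}|u_j|^2 w\,ds$ requires more than weak-* convergence of measures against fixed continuous functions, since the $u_j^\epsilon$ themselves depend on $\epsilon$. I would handle this by the Lipschitz graph parametrization of $\partial\Omega_\epsilon$ over $\partial\Omega$ (whose Jacobian factor is exactly what realizes the weight $w$ in the limit) combined with a compactness argument for traces on the fixed domain $\partial\Omega$; the oscillation being on a scale $\epsilon$ much smaller than the variation of $u_j^\epsilon$ makes the two-scale structure precise enough to identify the limit.
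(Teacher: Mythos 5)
You should first be aware that the paper does not actually prove this lemma: it states it and refers to \cite{bucur,FL_Steklov} for the non-magnetic case, asserting that the magnetic adaptation is straightforward. Your construction --- approximating $w$ by simple functions, realizing each constant $c_j\geq 1$ as the averaged area factor $\sqrt{1+|\nabla\eta_j|^2}$ of an $\epsilon$-periodic, $\epsilon$-amplitude graph perturbation, and deducing the weak-$*$ convergence of boundary measures by a Riemann-sum/change-of-variables computation --- is precisely the homogenization-of-the-boundary strategy of those references, so in that sense you have reconstructed the intended argument rather than found a new one. The upper bound for the eigenvalues via extended test functions and the min-max principle is also correct as written, and the magnetic features cause no trouble there: since $A$ is smooth and bounded on a neighborhood of $\overline\Omega$, one has $\|du\|_{L^2}\leq\|d^Au\|_{L^2}+\|A\|_\infty\|u\|_{L^2}$, so uniform magnetic energy bounds give uniform $H^1$ bounds without even invoking the diamagnetic inequality.

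The genuine gap is exactly where you place it, and your proposed resolution does not yet close it. For the lower bound you need
$$
\int_{\partial\Omega_\epsilon}|u^\epsilon_j|^2\,d\mathcal H\ \longrightarrow\ \int_{\partial\Omega}|u_j|^2\,w\,d\mathcal H
$$
for varying functions $u^\epsilon_j$, and ``a compactness argument for traces combined with the two-scale structure'' is a description of the goal, not a proof. The argument that actually works, and that constitutes the technical content of the cited references, has three ingredients you should make explicit: (i) the domains $\Omega_\epsilon$ are \emph{uniformly} Lipschitz (the graphs $x_n=\epsilon\eta_j(x'/\epsilon)$ have Lipschitz constant $\|\nabla\eta_j\|_\infty$ independent of $\epsilon$), which yields extension operators and trace inequalities $\|v\|^2_{L^2(\partial\Omega_\epsilon)}\leq C\bigl(\delta\|\nabla v\|^2_{L^2}+C_\delta\|v\|^2_{L^2}\bigr)$ with constants independent of $\epsilon$; (ii) writing $|u^\epsilon_j|^2-|\phi|^2$ for a smooth approximation $\phi$ of the weak limit $u_j$, the uniform trace inequality together with the compact embedding $H^1\hookrightarrow L^2$ on the fixed neighborhood controls the error uniformly in $\epsilon$; (iii) only for the fixed continuous function $|\phi|^2$ does one invoke the weak-$*$ convergence $\mathcal H|_{\partial\Omega_\epsilon}\rightharpoonup w\,\mathcal H|_{\partial\Omega}$. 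Without step (i) the claim can actually fail (wild oscillations with blowing-up Lipschitz constants can make boundary mass concentrate where the trace of the weak limit does not see it), so the uniform Lipschitz character of your construction is not a convenience but a load-bearing hypothesis that should be stated and used. Finally, in the reduction to simple weights, your partition pieces cannot in general be taken relatively open; take them with $\mathcal H$-negligible relative boundary instead, and note that the diagonal argument requires continuity of $\sigma_k^w(\Omega,\nu)$ in $w$ (for the fixed domain $\Omega$), which is easy from the min-max but is a separate statement you should record.
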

One can find a detailed proof e.g., in \cite{bucur,FL_Steklov} for the Laplace operator. The adaptation to the case of the magnetic Laplacian is straightforward.

\begin{proof}[Proof of Theorem \ref{planar}]
Let $A(r_0):=\{x\in\mathbb R^2:r_0<|x|<1\}\subset\mathbb R^2$, with $0<r_0<1$ be a planar annulus, and let $A=\nu d\theta$ (in polar coordinates $(r,\theta)$ in $\mathbb R^2$) be a potential one-form on $\mathbb R^2$ (it is closed with flux $\nu$ on each component of $\partial A(r_0)$). Consider the following weighted Steklov problem

\begin{equation}\label{weigh_AB}
\begin{cases}
\Delta_Au=0 & {\rm in\ } A(r_0)\\
d^Au(N)=\sigma w u  & {\rm on\ }\partial A(r_0),
\end{cases}
\end{equation}
where $w\equiv 1$ at $|x|=1$ and $w\equiv c\geq 1$ at $|x|=r_0$ is a positive weight on $\partial A(r_0)$. We denote the eigenvalues by
$$
\sigma_k^w(A(r_0),\nu).
$$
Now we choose $c=\frac{1}{r_0}$, and solve problem \eqref{weigh_AB}. 
 It is  standard to verify, by separation of variables,  that a first eigenfunction is  real, radial, and of the form, in polar coordinates $(r,\theta)$, $u(r)=ar^{\nu}+br^{-\nu}$.  A corresponding first eigenvalue is determined  by the relations:
$$
\begin{cases}
u'(1)=\sigma u(1),\\
-u'(r_0)=\frac{\sigma}{r_0}u(r_0).
\end{cases}
$$
This corresponds to a system of two linear equations in two unknowns $a,b$ which has a non-trivial solution if and only if the determinant of the associated matrix vanished. Such matrix is
$$
\begin{pmatrix}
\nu-\sigma & -\nu-\sigma\\
r_0^{-1+\nu}(\sigma+\nu) & -\nu r_0^{-1-\nu}(\sigma-\nu)
\end{pmatrix}
$$
Imposing the vanishing of the determinant we obtain the two solutions
$$
\sigma_-=\nu\frac{1-r_0^{\nu}}{1+r_0^{\nu}}\,,\ \ \ \sigma_+=\nu\frac{1+r_0^{\nu}}{1-r_0^{\nu}}
$$
and, consequently,
$$
\sigma_1^w(A(r_0),\nu)=\sigma_-=\nu\frac{1-r_0^{\nu}}{1+r_0^{\nu}}.
$$
We apply now Lemma \ref{lemma_bucur} to $A(r_0)$ with weight $w\equiv\frac{1}{r_0}$ on the inner boundary and $w\equiv 1$ on the outer boundary. We deduce that there exists a sequence of domains $\Omega_{\eps}(r_0)$, $\eps\in(0,\eps_0)$,  such that
$$
\lim_{\eps\to 0}|\partial\Omega_{\eps}(r_0)|=\int_{A(r_0)}w=4\pi
$$
and
$$
\lim_{\eps\to 0}\sigma_1(\Omega_{\eps}(r_0),\nu)=\sigma_1^w(A(r_0),\nu)=\nu\frac{1-r_0^{\nu}}{1+r_0^{\nu}},
$$
and, consequently,
$$
\lim_{\eps\to 0}\bar\sigma_1(\Omega_{\eps}(r_0),\nu)=4\pi\nu\frac{1-r_0^{\nu}}{1+r_0^{\nu}}.
$$
By taking $r_0\to 0^+$, and by a diagonal argument, we find a sequence of domains $\Omega_{\eps}$ such that
$$
\lim_{\eps\to 0}\bar\sigma_1(\Omega_{\eps},\nu)=4\pi\nu.
$$
One may think of these domains $\Omega_{\eps}(r_0)$ as deformed circular, planar annuli, whose inner boundary is described by the graph of a rapidly oscillating function over $r_0\mathbb S^1$, with a suitable ratio of the heights and the frequencies of the oscillations (see Figure \ref{fig_planar}).

\end{proof}

\section{Proofs of Section \ref{sec:bound:second}}\label{app:5}

In this section we prove Lemmas \ref{M*incr}, \ref{s*incr} and Theorem \ref{max_RI}.

\begin{proof}[Proof of Lemma \ref{M*incr}]
We recall that $M^*(\nu)$ is the unique positive solution of the following implicit equation in the unknown $M$:
\begin{equation}\label{implicitM0}
F(\nu,M):=(1-\nu)\tanh((1-\nu)M)-\nu\coth(\nu M)=0.
\end{equation}
Note that
$$
\frac{\partial F}{\partial M}=\frac{(1-\nu)^2}{\cosh^2((1-\nu)M)}+\frac{\nu^2}{\sinh^2(\nu M)}>0
$$
for all $M>0$ and all $\nu\in(0,\frac 12)$. Hence we are in the hypothesis of the Implicit Function Theorem: we have a well-defined function $M^*(\nu)$. In particular,
\begin{multline}\label{Mstar_der}
\frac{d}{d\nu}M^*(\nu)=-\left(\frac{\partial F}{\partial M}\right)^{-1}\frac{\partial F}{\partial\nu}_{|_{M=M^*(\nu)}}\\=\left(\frac{(1-\nu)^2}{\cosh^2((1-\nu)M^*(\nu))}+\frac{\nu^2}{\sinh^2(\nu M^*(\nu))}\right)^{-1}\\
\cdot\left(\tanh((1-\nu)M^*(\nu))+\frac{(1-\nu)M^*(\nu)}{\cosh^2((1-\nu)M^*(\nu))}+\coth(\nu M^*(\nu))-\frac{\nu M^*(\nu)}{\sinh^2(\nu M^*(\nu))}\right).
\end{multline}
The functions $\tanh((1-\nu)M)+\frac{(1-\nu)M}{\cosh^2((1-\nu)M)}$ and $\coth(\nu M)-\frac{\nu M}{\sinh^2(\nu M)}$ of the variables $\nu,M$, are positive for any value of $M>0$, $\nu\in(0,\frac 12)$. It is straightforward for the former. As for the latter function, consider $f(x)=\coth(x)-\frac{x}{\sinh^2(x)}=\frac{1}{\sinh^2(x)}\left(\cosh(x)\sinh(x)-x\right)$. The function $\cosh(x)\sinh(x)-x$ has limit $0$ as $x\to 0^+$ and moreover $\frac{d}{dx}\left(\cosh(x)\sinh(x)-x\right)=2\sinh^2(x)\geq 0$. Hence $M^*(\nu)$ has positive derivative, and this concludes the proof of the monotonicity. 

\medskip

We consider now the limits. We first prove that $\lim_{\nu\to\frac 12^-}M^*(\nu)=+\infty$. Assume by contradiction that $M^*(\nu)\leq C$ for some $C\in\mathbb R$ and for all $\nu\in(0,\frac 12)$. Recall that $M^*(\nu)$ is the unique positive root of \eqref{implicitM0}. Using hyperbolic trigonometric identities we also see that $M^*(\nu)$ is the unique positive root of
$$
\cosh((1-2\nu)M)=(1-2\nu)\cosh(M)
$$
in the unknown $M$. Since $\cosh((1-2\nu)x)\geq 1$:
$$
1\leq (1-2\nu)\cosh (C),
$$
which is clearly impossible for $\nu\to\frac 12^-$.

\medskip We prove now that   $\lim_{\nu\to 0^+}M^*(\nu)=M_0$, where $M_0$ is the unique positive root of $M\tanh(M)=1$. Take a sequence $M^*(\nu_n)$, $n\in\mathbb N$, $\nu_n\to 0^-$ as $n\to+\infty$. We have that $M^*(\nu_n)>0$ for all $n\in\mathbb N$. Hence there exists $M^*_{\infty}\geq 0$ such that $M^*(\nu_n)\to M^*_{\infty}$ as $n\to\infty$. Clearly $M^*_{\infty}\ne 0$, otherwise we would get a contradiction as in the previous case $\nu\to\frac 12^-$. Plugging $M^*(\nu_n)$ into \eqref{implicitM0}, and passing to the limit, we get that $M^*_{\infty}$ is positive and solves $M\tanh(M)=1$, hence $M^*_{\infty}=M_0$. This is true for all sequences $M^*(\nu_n)$ with $\nu_n\to 0^+$. Hence the function $M^*(\nu)$ has limit $M_0$ as $\nu\to 0^+$.
\end{proof}

\begin{proof}[Proof of Lemma \ref{s*incr}] 
We prove first that the function $\sigma^*(\nu)$ is decreasing with respect to $\nu$ for $\nu\in(0,\frac 12)$.
We recall that
\begin{equation}\label{sigmastar}
\sigma^*(\nu)=4\pi\nu\coth(\nu M^*(\nu)),
\end{equation}
where $M^*(\nu)$ is the unique positive root of \eqref{implicitM0} in the unknown $M$. Deriving \eqref{implicitM0} we find an expression for $\frac{d}{d\nu}M^*(\nu)$, namely \eqref{Mstar_der}, which we substitute in the formula for the derivative of $\sigma^*(\nu)$; then, we can make the computation by deriving the right-hand side of \eqref{sigmastar}. Altogether we obtain
\begin{equation}\label{der0}
\frac{d}{d\nu}\sigma^*(\nu)=\frac{\sinh(\nu M)\cosh(\nu M)-\nu\left(M(1-\nu)+\nu\frac{\cosh(M)\cosh((1-\nu)M)}{\sinh(\nu M)}\right)}{\sinh^2(\nu M)+\nu^2\cosh^2((1-\nu)M)}|_{|_{M=M^*(\nu)}}.
\end{equation}
Next, we operate the substitution $M^*(\nu)=:\frac{\log(y)}{\nu}$ so that $y>1$. We find that
$$
\frac{d}{d\nu}\sigma^*(\nu)=f_{\nu}(y):=\frac{y^4-1}{(y^2-1)^2}-\frac{4y^2\nu\left(\frac{1-\nu}{\nu}\log(y)+\frac{\nu(1+y^2+y^{2-2/\nu}+y^{2/\nu})}{2(y^2-1)}\right)}{(y^2-1)^2(1-\nu)^2+(y^2+y^{2/\nu})^2\nu^2y^{-2/\nu}}.
$$
We set $g_{\nu}(y):=\left((y^2-1)^2(1-\nu)^2+(y^2+y^{2/\nu})^2\nu^2y^{-2/\nu}\right)f_{\nu}(y)$. Note that $f_{\nu}(y)<0$ if and only if $g_{\nu}(y)<0$. Hence, the monotonicity of $\sigma^*(\nu)$ will be proved once we show that $g_{\nu}(y)<0$ for all $\nu\in(0,\frac 12)$ and all $y>1$. 
This is achieved once we show that, for any fixed $\nu\in (0,\frac 12)$:
\begin{equation}\label{easy}
\lim_{y\to 1^+}g_{\nu}(y)=0
\end{equation}
and
\begin{equation}\label{lesseasy}
\dfrac{d}{dy}(y^{-2}g_{\nu}(y))<0.
\end{equation}

Now \eqref{easy} is clear from the definition of $g_{\nu}$. 
To prove \eqref{lesseasy}, we start by computing
$$
\dfrac{d}{dy}(y^{-2}g_{\nu}(y))=-\frac{2}{\nu}y^{-3-2/\nu}\left(y^4+y^{4/\nu}+y^{2/\nu}\left(\frac{2y^2}{\nu}-\frac{1-\nu}{\nu}(1+y^4)\right)\right).
$$
By completing the squares:
$$
y^{2/\nu}\left(\frac{2y^2}{\nu}-\frac{1-\nu}{\nu}(1+y^4)\right)=y^{2/\nu}\frac{1-\nu}{\nu}(y^2-1)^2+2y^{2+2/\nu}
$$
and therefore
\begin{multline*}
y^4+y^{4/\nu}+y^{2/\nu}\left(\frac{2y^2}{\nu}-\frac{1-\nu}{\nu}(1+y^4)\right)=(y^2+y^{2/\nu})^2-\frac{1-\nu}{\nu}y^{2/\nu}(y^2-1)^2\\
=\left(y^2+y^{2/\nu}-\sqrt{\frac{1-\nu}{\nu}}y^{1/\nu}(y^2-1)\right)\left(y^2+y^{2/\nu}+\sqrt{\frac{1-\nu}{\nu}}y^{1/\nu}(y^2-1)\right)
\end{multline*}
Atv this point, our claim \eqref{lesseasy} is proved if the function
$$
h_{\nu}(y):=y^2+y^{2/\nu}-\sqrt{\frac{1-\nu}{\nu}}y^{1/\nu}(y^2-1)
$$
if positive for all $\nu\in(0,\frac 12)$, $y>1$. We have that $h_{\nu}(1)=2$ for all $\nu\ne 0$. On the other hand,
$$
(y^{-2}h_{\nu}(y))'=\frac{y^{-3+1/\nu}}{\nu}k_{\nu}(y)
$$
where
$$
k_{\nu}(y):=\sqrt{\frac{1-\nu}{\nu}}(1-2\nu-y^2)+2y^{1/\nu}(1-\nu)
$$
for all $y>1$. It is then enough to show that $k_{\nu}$ is positive on $(1,\infty)$, which will be achieved by showing that it is convex and satisfies $k_{\nu}(1)>0$ and $k_{\nu}'(1)>0$. 
Now  $k_{\nu}(1)=2\sqrt{1-\nu}(\sqrt{1-\nu}-\sqrt{\nu})>0$ and $k_{\nu}'(1)=\frac{2\sqrt{1-\nu}}{\nu}(\sqrt{1-\nu}-\sqrt{\nu})>0$ for all $\nu\in(0,\frac 12)$. 
To prove convexity, observe that 
$$
k_{\nu}''(y)=2\left(\frac{(1-\nu)^2}{\nu^2}y^{-2+1/\nu}-\sqrt{\frac{1-\nu}{\nu}}\right)
$$
which is positive if and only if $y>\left(\frac{\nu}{1-\nu}\right)^{\frac{3\nu}{2(1-2\nu)}}$. But this is always true, since the function $w(\nu):=\left(\frac{\nu}{1-\nu}\right)^{\frac{3\nu}{2(1-2\nu)}}$ takes values between $0$ and $1$ on the interval $(0,\frac 12)$. In fact, $\lim_{\nu\to 0^+}w(\nu)=1$ and $w'(\nu)<0$ for all $\nu\in(0,\frac 12)$. This concludes the proof of the monotonicity of $\sigma^*(\nu)$.

\medskip

The limits of $\sigma^*(\nu)$ as $\nu\to 0^+$ and $\nu\to \frac 12^-$ follow immediately from the monotonicity of $\sigma^*(\nu)$ and from Lemma \ref{M*incr}.
\end{proof}

\begin{proof}[Proof of Theorem \ref{max_RI}] 
The proof is divided in several steps and is inspired by that of \cite[\S 3]{fs1}.

{\bf Part 1: eigenvalues of rotationally invariant annuli.} In the first part of the proof we consider a rotationally invariant annulus and compute its Steklov spectrum. In particular, we will understand how the eigenvalues depend on the conformal modulus and on the ratio of the boundary lengths.

\medskip

 We follow the approach of \cite{fs1}. Let $\Sigma$ be a rotationally invariant annulus. Then it is isometric to $[0,Z]\times\mathbb S^1$ for some $Z>0$, with metric $f(z)^2(dz^2+d\theta^2)$, $z\in[0,Z]$, $\theta\in\mathbb S^1$. If we consider, instead of $f(z)$, the flat metric
$$
f(z)=\left(1-\frac{z}{Z}\right)f(0)+\frac{z}{Z}f(Z)
$$
the  Steklov eigenvalues remain unchanged (we have performed a conformal transformation which restricts to an isometry on the boundary).  Now we proceed as usual, and separate variables, looking for solutions $u(z,\theta)$ of the form $v(z)e^ {ik\theta}$. These are written as
$$
v(z)=a\cosh(|k-\nu|z)+b\sinh(|k-\nu|z).
$$
If $\nu=0$, we are in the case of \cite{fs1}. So we will consider $\nu\in(0,\frac 12]$. The coefficients $a,b$ are found by imposing that
$$
\frac{\partial_N u}{u}|_{z=Z}-\frac{\partial_N u}{u}|_{z=0}=0
$$
which amounts to
$$
\frac{v'(Z)}{f(Z)v(Z)}+\frac{v'(0)}{f(0)v(0)}=0.
$$
Computations similar to those of Section \ref{spec:rot} allow to find that for any $k\in\mathbb Z$ we have two solutions $(a_1,b_1)$ and $(a_2,b_2)$ (up to scalar multiples), and hence we deduce that for each $k\in\mathbb Z$ we have two eigenvalues 
\begin{equation}\label{L10}
\sigma_{1k}=\frac{|k-\nu|}{2}\left(\left(\frac{1}{f(0)}+\frac{1}{f(Z)}\right)\coth(|k-\nu|Z)-\sqrt{D(Z)}\right)
\end{equation}
and
\begin{equation}\label{L20}
\sigma_{2k}=\frac{|k-\nu|}{2}\left(\left(\frac{1}{f(0)}+\frac{1}{f(Z)}\right)\coth(|k-\nu|Z)+\sqrt{D(Z)}\right),
\end{equation}
where
$$
D(Z)=\left(\frac{1}{f(0)}+\frac{1}{f(Z)}\right)^2\coth^2(|k-\nu|Z)-\frac{4}{f(0)f(Z)}.
$$
It is standard to see that $\sigma_{1k}$ and $\sigma_{2k}$ are increasing with respect to $|k-\nu|$, and moreover $\sigma_{1k}<\sigma_{2k}$ for all $k\in\mathbb Z$.

Note that these branches are all simple for $\nu\in(0,\frac 12)$, while they are all double for $\nu=\frac 12$. In fact for $\nu=\frac 12$ we have $\sigma_{ik}=\sigma_{i(1-k)}$ for all $k\in\mathbb Z$, $i=1,2$.

\medskip

Now we note that 
$$
\sigma_1(\Sigma,\nu)=\sigma_{10}
$$
for all $\nu\in(0,\frac 12)$; $\sigma_1(\Sigma,0)=0$ and $\sigma_1(\Sigma,\frac 12)=\sigma_{10}=\sigma_{11}$.

\medskip

What we will do now is to fix
$$
A=\frac{f(0)}{f(Z)},
$$
that is, to fix the ratio of the two boundary lengths. Hence the expressions for the normalized eigenvalues become
\begin{multline}\label{L11}
|\partial\Sigma|\sigma_{1k}=2\pi(f(0)+f(Z))\sigma_{1k}\\=\pi(A+1)|k-\nu|\left(\left(\frac{A+1}{A}\right)\coth(|k-\nu|Z)-\sqrt{\left(\frac{A+1}{A}\right)^2\coth^2(|k-\nu|Z)-\frac{4}{A}}\right)
\end{multline}
and
\begin{multline}\label{L22}
|\partial\Sigma|\sigma_{2k}=2\pi(f(0)+f(Z))\sigma_{2k}\\=\pi(A+1)|k-\nu|\left(\left(\frac{A+1}{A}\right)\coth(|k-\nu|Z)+\sqrt{\left(\frac{A+1}{A}\right)^2\coth^2(|k-\nu|Z)-\frac{4}{A}}\right).
\end{multline}
Standard computations allow to prove that $|\partial\Sigma|\sigma_{1k}$ is an increasing function of the variable $Z$, while  $|\partial\Sigma|\sigma_{2k}$ is a decreasing function of the variable $Z$ (see also Section \ref{spec:rot} and \cite[\S 3]{fs1}).

\medskip

For the first normalized eigenvalue, namely, $|\partial\Sigma|\sigma_{10}$ we have that, for any fixed $A>0$, it is an increasing function of $Z$. Moreover,
$$
\lim_{Z\to 0^+}|\partial\Sigma|\sigma_{10}=0
$$
and
$$
\lim_{Z\to+\infty}|\partial\Sigma|\sigma_{10}=\begin{cases}
2\pi\frac{A+1}{A}\nu\,, & A\geq 1\\
2\pi(A+1)\nu\,, & 0<A\leq 1.
\end{cases}
$$
We deduce that
$$
\sup_{Z,A}\bar\sigma_1(\Sigma,\nu)=4\pi\nu
$$
and it is achieved by taking $Z\to+\infty$ for any annulus with boundary lengths ratio equal to $1$. In the case of $\nu=\frac{1}{2}$ the same statement holds for $\bar\sigma_2(\Sigma,\nu)$. However, we have already proved the upper bound for $\bar\sigma_1(\Sigma,\nu)$ for any Riemannian annulus in Theorem \ref{max_s1}.

\medskip

{\bf Part 2: maximising $\bar\sigma_2(\Sigma,\nu)$ for fixed $A$.} Here, for a fixed value of $A$, the ratio of the boundary lengths, we prove that there exists a unique $Z$ for which $\bar\sigma_2(\Sigma,\nu)$ is maximised.

\medskip

We consider $\bar\sigma_2(\Sigma,\nu)$ when $\nu\in(0,\frac 12)$. We note that, for fixed $A$,
$$
\bar\sigma_2(\Sigma,\nu)=|\partial\Sigma|\min\left\{\sigma_{11},\sigma_{20}\right\}.
$$
Now, as we have already said, $|\partial\Sigma|\sigma_{11}$ is increasing with $Z$, and moreover we compute
$$
\lim_{Z\to 0^+}|\partial\Sigma|\sigma_{11}(\nu)=0
$$
and
$$
\lim_{Z\to +\infty}|\partial\Sigma|\sigma_{11}=\begin{cases}
2\pi\frac{A+1}{A}(1-\nu)\,, & A\geq 1\\
2\pi(A+1)(1-\nu)\,, & 0<A\leq 1.
\end{cases}
$$

On the other hand, $|\partial\Sigma|\sigma_{20}$ is decreasing with $Z$, and moreover we compute
$$
\lim_{Z\to 0^+}|\partial\Sigma|\sigma_{20}=+\infty
$$
and
$$
\lim_{Z\to +\infty}|\partial\Sigma|\sigma_{20}=\begin{cases}
2\pi\frac{A+1}{A}\nu\,, & A\geq 1\\
2\pi(A+1)\nu\,, & 0<A\leq 1.
\end{cases}
$$
We deduce that there exists a unique value of $Z=Z(A)\in(0,+\infty)$ such that
$$
\bar\sigma_2(\Sigma,\nu)=|\partial\Omega|\sigma_{11}=|\partial\Omega|\sigma_{20}.
$$
We have proved that $\bar\sigma_2(\Sigma,\nu)$ is maximised among all  rotationally invariant metrics with fixed ratio of the boundary lengths equal to $A$ for a unique value of the conformal modulus $Z/2$. In particular, in correspondence of this maximum we have that $\sigma_2(\Sigma,\nu)$ is a double eigenvalue.

\medskip

{\bf Part 3: maximising with respect to $A$.} Here we prove that among all rotationally invariant annuli that maximise $\bar\sigma_2(\Sigma,\nu)$ for a fixed value of $A$, the maximum is achieved {\it if and only if} $A=1$, i.e., for  rotationally invariant {\it symmetric} annuli. This will prove the theorem.

\medskip

Here, as in the case of $\sigma_1(\Sigma,\nu)$, we need to recognize for which $A$ we have the maximum of all these maxima, namely, understand what is
$$
\bar\sigma_2^*(\nu):=\max_{A>0}\bar\sigma_2^*(A,\nu),
$$
where $\bar\sigma_2^*(A,\nu)$ is the maximum over all $Z>0$ of $\bar\sigma_2(\Sigma,\nu)$ for a fixed $A>0$, which exists and is unique as we have proved in Part 2 of this proof. Clearly, the above definition of $\bar\sigma_2^*(\nu)$ represents the maximum of $\bar\sigma_2(\Sigma,\nu)$ over all rotationally  metrics on an annulus.

\medskip

In this part of the proof it is convenient to use a slice of the (classical, minimal) catenoid as reference annulus. Let now $a\in\mathbb R$, and consider the surface of revolution $\Sigma_a$ given by
$$
(\cosh(z-a)\cos(\theta),\cosh(z-a)\sin(\theta),z-a)
$$
for $z\in\mathbb R$, $\theta\in\mathbb S^1$. This is the catenoid centered at $(0,0,a)$.

We consider now the following two functions:
$$
u_1(z,\theta)=\cosh((1-\nu)(z-a))e^{i\theta}
$$
and
$$
u_2(z,\theta)=\sinh(\nu z).
$$
One can easily check that $\Delta_A u_i=0$ on $\Sigma_a$ for all $a$ (the functions $u_i$ play in some sense the role of $x,y,z$ in the case $\nu=0$ discussed in \cite{fs1}: they are harmonic on $\Sigma_a$ for all $a$), where $A=\nu d\theta$.

Now, let $z_1<z_2$, and let us compute $\frac{\partial_N u_i}{u_i}$ at $z=z_1$ and $z=z_2$, for $i=1,2$. It turns out that
\begin{align}
\frac{\partial_N u_1}{u_1}|_{z=z_2}&=(1-\nu)\frac{\tanh((1-\nu)(z_2-a))}{\cosh(z_2-a)},\label{c11}\\
\frac{\partial_N u_2}{u_2}|_{z=z_2}&=\nu\frac{\coth(\nu z_2)}{\cosh(z_2-a)},\label{c21}\\
\frac{\partial_N u_1}{u_1}|_{z=z_1}&=-(1-\nu)\frac{\tanh((1-\nu)(z_1-a))}{\cosh(z_1-a)},\label{c12}\\
\frac{\partial_N u_2}{u_2}|_{z=z_1}&=-\nu\frac{\coth(\nu z_1)}{\cosh(z_1-a)}.\label{c22}
\end{align}
We consider now the equation
\begin{equation}\label{t12}
F(z)=(1-\nu)\tanh((1-\nu)(z-a))-\nu\coth(\nu z)=0
\end{equation}
in the unknown $z$. We have, for any $\nu\in(0,\frac 12)$ and $a\in\mathbb R$
$$
\lim_{z\to 0^{\pm}}F(z)=\mp\infty
$$
and
$$
\lim_{z\to\pm\infty}F(z)=\pm(1-2\nu).
$$
Moreover
$$
F'(z)=\frac{\nu^2}{\sinh^2(\nu z)}+\frac{(1-\nu)^2}{\cosh^2((1-\nu)(z-a))}.
$$
We deduce that there exists exactly two values $z_1,z_2$, depending on $a,\nu$, for which $F(z_1)=F(z_2)=0$. In particular, it is not difficult  to see that, as $a\to +\infty$,
\begin{equation}\label{z1}
z_2\to+\infty,\ \ \  z_1\to-\frac{1}{\nu}\arccoth\left(\frac{1-\nu}{\nu}\right),
\end{equation}
while, as $a\to-\infty$,
\begin{equation}\label{z2}
z_2\to \frac{1}{\nu}\arccoth\left(\frac{1-\nu}{\nu}\right)\,,\ \ \ z_1\to-\infty.
\end{equation}
For $a=0$ we have $z_2=-z_1$, and $z_2$ is given as the unique positive zero of the odd function
$$
(1-\nu)\tanh((1-\nu)z)-\nu\coth(\nu z).
$$
In particular, we always have $z_1<0<z_2$.

\medskip

 We consider from now, for any fixed $a\in\mathbb R$, the slab of $\Sigma_a$ between $z_1=z_1(a,\nu)$ and $z_2=z_2(a,\nu)$ and we will denote it still by $\Sigma_a$ to simplify the notation (we recall that $z_1,z_2$ are uniquely determined by $a,\nu$).
 
  Now, let 
$$
T:=-\frac{\frac{\partial_N u_2}{u_2}|_{z=z_2}}{\frac{\partial_N u_2}{u_2}|_{z=z_1}}=-\frac{\coth(\nu z_2)\cosh(z_1-a)}{\coth(\nu z_1)\cosh(z_2-a)}.
$$
We rescale the metric in a neighborhood of $z=z_1$ by multiplying it by the factor $T$. Hence $u_1,u_2$ are Steklov eigenfunctions for this new metric (they are still $A$-harmonic) with the same eigenvalue
$$
\sigma=\frac{\partial_N u_2}{u_2}|_{z=z_2}=\nu\frac{\coth(\nu z_2)}{\cosh(z_2-a)},
$$
which is then double. Also, the length of $\partial\Sigma_a$ in this new metric is
$$
|\partial\Sigma_a|=2\pi(\cosh(z_2-a)+\cosh(z_1-a)T)
$$
In particular, $\sigma$ is the second eigenvalue: it has a second eigenfunction, $u_2$, which is real, constant on each boundary component, and of opposite sign. 

\medskip

Roughly speaking, we have identified the maximiser for $\bar\sigma_2(\Sigma,\nu)$ when the ratio of the two boundary lengths is
$$
A=\frac{\cosh(z_2-a)}{\cosh(z_1-a)T}=-\frac{\cosh^2(z_2-a)\coth(\nu z_1)}{\cosh^2(z_1-a)\coth(\nu z_2)}
$$ 
If we write $t=z-a$ in the expression of $F(z)$, we see that
$$
(1-\nu)\tanh((1-\nu)t)-\nu\coth(\nu(t+a))=0
$$
when $t=z_1-a,z_2-a$. The same analysis carried out for $z_1,z_2$ in \eqref{z1}-\eqref{z2} allows to show that, as $a\to +\infty$,
$$
z_2-a\to \frac{1}{1-\nu}\arctanh\left(\frac{\nu}{1-\nu}\right)\,,\ \ \ z_1-a\to -\infty
$$
while, as $a\to-\infty$,
$$
z_2-a\to +\infty\,,\ \ \ z_1-a\to -\frac{1}{1-\nu}\arctanh\left(\frac{\nu}{1-\nu}\right).
$$
Note that, if we define the function
$$
A(a):=-\frac{\cosh^2(z_2(a)-a)\coth(\nu z_1(a))}{\cosh^2(z_1(a)-a)\coth(\nu z_2(a))},
$$
then for any $A>0$ there exists $a\in\mathbb R$ such that $A=A(a)$. 

\medskip

In a few words, we have identified for any ratio $A$ of the boundary lengths, the height $z_2-z_1$ at which the second normalized eigenvalue is double, and hence, is maximal for that given ratio $A$.

\medskip

It remains to prove that we have the maximum if and only if $A=1$, that is, for $a=0$, i.e., when $\Sigma_a=\Sigma_0$ is symmetric. Let us write the explicit expression for $\bar\sigma_2^*(A(a),\nu)=:g(a)$ on $\Sigma_a$ (recall, we have the uniquely determined $z_1(a)<0<z_2(a)$):
\begin{multline}
g(a)=2\pi\nu(\cosh(z_2-a)+\cosh(z_1-a)T)\frac{\coth(\nu z_2)}{\cosh(z_2-a)}\\
=2\pi\nu\frac{\coth(\nu z_2)}{\coth(\nu z_1)}\cosh^2(z_1-a)\left(\frac{\coth(\nu z_1)}{\cosh^2(z_1-a)}-\frac{\coth(\nu z_2)}{\cosh^2(z_2-a)}\right).
\end{multline}
Since $z_1(a)=-z_2(-a)$, we see that it is sufficient to study $g(a)$ for $a>0$ and prove that $g(0)>g(a)$ for all $a>0$. To do so, we  prove that $g$ is {\it strictly} decreasing on $(0,+\infty)$. 

\medskip First of all, we compute $z_1'(a)$, $z_2'(a)$. Since they solve the implicit equation $F(z)=0$, we have that
\begin{equation}\label{z11}
z'(a)=-\frac{\partial_a F}{\partial_zF}|_{z=z(a)}=\frac{1}{1+\frac{\nu^2}{(1-\nu)^2}\frac{\cosh^2((z-a)(1-\nu))}{\sinh^2(z\nu)}}|_{z=z(a)}.
\end{equation}
Since $z=z(a)$ solves $F(z(a))=0$, we deduce from this identity that 
$$
\cosh^2((z(a)-a)(1-\nu))=\left(1-\frac{\nu^2}{(1-\nu)^2}\coth^2(\nu z(a))\right)^{-1}.
$$
Substituting into \eqref{z11} we get
\begin{equation}\label{z22}
z'(a)=1-\frac{\nu^2}{(1-2\nu)\sinh^2(\nu z(a))}.
\end{equation}
Hence \eqref{z22} holds with $z(a)=z_1(a)$ and $z(a)=z_2(a)$.
Now we are in position to prove that $g'(a)<0$ for all $a>0$. 

We assume that $z_i'(a)>0$. We will prove this fact at the end. This implies that $\coth(\nu z_2(a))$ is a decreasing function (recall that $z_2(a)>0$). Moreover, from \eqref{z2} we know that $(z_i(a)-a)'<0$, hence $\cosh^2(z_1(a)-a)$ is decreasing. We compute 
\begin{multline}
g'(a)
=-\nu \frac{1}{\sinh^2(\nu z_2)}\left(1-\frac{\nu^2}{(1-2\nu)\sinh^2(\nu z_2)}\right)\\
-\frac{\nu\cosh^2(z_1-a)^2\coth^(\nu z_2)}{\cosh^2(z_2-a)\cosh^2(\nu z_1)}\left(1-\frac{\nu^2}{(1-2\nu)\sinh^2(\nu z_2)}\right)\\
+2\nu\frac{\cosh^2(z_1-a)\coth(\nu z_2)\tanh(\nu z_1)}{\sinh^2(\nu z_2)\cosh^2(z_2-a)}\left(1-\frac{\nu^2}{(1-2\nu)\sinh^2(\nu z_2)}\right)\\
-\frac{2\nu^2}{1-2\nu}\frac{\cosh^2(z_1-a)\coth^2(\nu z_2)\tanh(\nu z_1)}{\cosh^2(z_2-a)}\left(\frac{\tanh(z_2-a)}{\sinh^2(\nu z_2)}-\frac{\tanh(z_1-a)}{\sinh^2(\nu z_1)}\right).
\end{multline}
The fact that the first three lines of this expression are negative follows since $a>0$, $z_2>a>0$, $z_1<0$, and since we have assumed $z_i'(a)>0$, which implies, in view of \eqref{z2}, that $1-\frac{\nu^2}{(1-2\nu)\sinh^2(\nu z_2)}>0$. It remains to prove that the last line is negative. The factor outside the parenthesis is positive. Hence we have to prove that the function
$$
h(a)=\frac{\tanh(z(a)-a)}{\sinh^2(\nu z(a))}
$$
is decreasing. We recall that $|z_i(a)|>c(\nu)>0$ for all $a>0$. Hence the function is smooth. We compute
\begin{multline}
h'(a)=\frac{\nu}{\sinh^2(\nu z(a))}\left(-2\coth(\nu z(a))\tanh(z(a)-a)\left(1-\frac{\nu^2}{(1-2\nu)\sinh^2(\nu z(a))}\right)\right.\\
\left.-\frac{\nu}{(1-2\nu)\cosh^2(z(a)-a)\sinh^2(\nu z(a))}\right).
\end{multline}
Again, we see that this quantity is negative since $z_i'(a)>0$ and that $\coth(\nu z_i(a))\tanh(z_i(a)-a)>0$ for all $a>0$.

\medskip

The proof is concluded provided $z_i'(a)>0$. Now, we have from \eqref{z2} that
$$
z'(a)=1-\frac{\nu^2}{(1-2\nu)\sinh^2(\nu z(a))}.
$$
In particular,
$$
z'(0)=1-\frac{\nu^2}{(1-2\nu)\sinh^2(\nu z(0))}
$$
and $z(0)$ solves $(1-\nu)\tanh((1-\nu)z(0))=\nu\coth(\nu z(0))$. We find, using this relation, that
$$
\sinh^2(\nu z(0))=\frac{1}{\left(\frac{1-\nu}{\nu}\right)^2\tanh^2((1-\nu)z(0))-1}>\frac{\nu^2}{1-2\nu}.
$$
Hence $z'(0)>0$. Computing $\frac{d}{da}\sinh^2(\nu z(a))$ we find that its derivative is
$$
\nu\cosh(\nu z(a))z'(a),
$$
so that $\sinh^2(\nu z(a))$ is increasing near $a=0$. Hence we will always have $z'(a)>0$.
\end{proof}

\section{Proofs of Section \ref{geom1}}\label{app:6}

In this section we prove Theorem \ref{geom-asurf}, Proposition \ref{Madecr} and Theorem \ref{geom_car_asurf}.

\begin{proof}[Proof of Theorem \ref{geom-asurf}]
We first prove the equivalence of $a)$ and $b)$. A unit normal vector field to $\Sigma$ is given by
$$
N(t,\theta)=\dfrac{1}{\sqrt{1+\rho'(t)^2}}(-\cos(\theta),-\sin(\theta), \rho'(t)).
$$
A standard computation with the second fundamental form shows that the principal curvatures are given by
\begin{equation}\label{pc}
\kappa_1(t,\theta)=-\dfrac{\rho''(t)}{(1+\rho'(t)^2)^{3/2}}, \quad \kappa_2(t,\theta)=\dfrac{1}{\rho(t)\sqrt{1+\rho'(t)^2}}.
\end{equation}
Note that $\kappa_1$ is the principal curvature in the axial direction $\derive{}{t}$ (it equals, in fact, the curvature of the profile at $t$), while $\kappa_2$ is the principal curvature in the rotational direction $\derive{}{\theta}$. One sees immediately, from the defining ODE \eqref{asurf-ODE},  that $\kappa_1+\alpha \kappa_2=0$.

\medskip

We prove now the equivalence of $a)$ and $c)$. It is well-known (see e.g., \cite{sir}) that a surface is critical for the weighted area 
$
\int_{\Sigma}w\,dv,
$
where $w= e^{-f}$ is a smooth positive density, if and only if the weighted mean curvature
$$
H_{f}:= H+\derive fN
$$
vanishes identically. Here $N$ is the unit normal vector and $H$ is the trace if the second fundamental form of $\Sigma$ with respect to $N$ (that is, $H=\kappa_1+\kappa_2$). Taking
$w=\rho^{\alpha-1}$ amounts to taking $f=(1-\alpha)\log\rho$, so that:
\begin{equation}\label{weightedmc}
H_{f}=H+\dfrac{1-\alpha}{\rho}\scal{\bar\nabla\rho}{N},
\end{equation}
where $\bar\nabla$ denotes the usual gradient in $\mathbb R^3$. We observe that the function $\scal{\bar\nabla\rho}{N}$ does not depend on $\theta$, and then we can compute it for $\theta=0$, where we have:
$$
\bar\nabla\rho(t,0)=(1,0,0), \quad N(t,0)=\dfrac{1}{\sqrt{1+\rho'(t)^2}}(-1,0, \rho'(t)),
$$
hence
$$
\scal{\bar\nabla\rho}{N}=-\dfrac{1}{\sqrt{1+\rho'(t)^2}}.
$$
Recalling that $H=\kappa_1+\kappa_2$ we get from \eqref{pc} and \eqref{weightedmc}:
$$
H_{f}=\dfrac{1}{\rho(1+\rho'^2)^{3/2}}\Big(-\rho\rho''+\alpha(1+\rho'^2)\Big)
$$
which vanishes if and only if $a)$ holds. 

\smallskip

We prove now the equivalence of $c)$ and $d)$. Here we state some preliminary facts.  Consider the potential one-form in $\mathbb R^3$
$$
A=\dfrac{\alpha}{\rho^2}(-xdy+ydx),
$$
(recall that $\rho$ is the distance from the $z$-axis, namely $\rho=\sqrt{x^2+y^2}$).

 We write $\bar\Delta_Au$ for the magnetic Laplacian in $\mathbb R^3$ and let $u=\rho^{\alpha}$. Also, we write $\bar\Delta$ for the usual Laplacian in $\mathbb R^3$. Then, since $A$ is co-closed:
$$
\bar\Delta_Au=\bar\Delta u+\abs{A}^2u+2i\scal{du}{A}.
$$
Now $du=\alpha\rho^{\alpha-1}d\rho$ and since $A(\bar\nabla\rho)=0$ we see that $\scal{du}{A}=0$. From the formula
$$
\bar\Delta(\psi\circ\rho)=-(\psi''\circ\rho)\abs{\bar\nabla\rho}^2+(\psi'\circ\rho)\bar\Delta\rho,
$$
we obtain, since $\bar\Delta\rho=-\frac 1{\rho}$, the formula
$$
\bar\Delta(\psi\circ\rho)=-(\psi''\circ\rho)-\frac{1}{\rho}(\psi'\circ\rho).
$$
Since $\abs{A}^2=\frac{\alpha^2}{\rho^2}$ we arrive, writing $\psi=\psi(\rho)$, at 
$$
\bar\Delta_Au=(-\psi''-\frac{\psi'}{\rho}+\frac{\alpha^2}{\rho^2}\psi)\circ\rho.
$$
Note that, when applied to $\psi=\rho^{\alpha}$ we get $\bar\Delta_Au=0$.

\smallskip

Next, we observe that $\bar\nabla A(N,N)=0$ everywhere, because $A$ is a function times a Killing form. 
Since $\Sigma$ is rotationally invariant we also have $A(N)=0$ everywhere on $\Sigma$ and $\bar\nabla^2\rho(N,N)=0$, where $\bar\nabla^2$ denotes the usual Hessian in $\mathbb R^3$.  Since $A(N)=0$, we observe that viewing $A$ as a vector field in $\mathbb R^3$, it is everywhere tangent to any rotational surface and that its projection onto $\Sigma$ is itself. As usual, we denote by $\Delta_A$ the magnetic Laplacian of the surface $\Sigma$ endowed with (the restriction of) $A$ as well, which is closed with flux $\alpha$. We want to show that $\Delta_Au=0$. 

\medskip

For any function $u$ on $\mathbb R^3$ the following decomposition of $\bar\Delta_A$ holds (it follows from the analogous, well-known decomposition of $\bar\Delta$):
\begin{equation}\label{blaplacian}
\bar\Delta_Au=\Delta_Au-\bar\nabla^2u(N,N)+H\scal{\bar\nabla u}{N}.
\end{equation}
If $u=\psi\circ\rho$ we see that $\bar\nabla^2u=(\psi''\circ\rho)d\rho\otimes d\rho+(\psi'\circ\rho)\bar\nabla^2\rho$. Hence:
$$
\bar\nabla^2u(N,N)=\alpha(\alpha-1)\rho^{\alpha-2}\scal{\bar\nabla\rho}{N}^2.
$$
One arrives at the formula 
$$
-\bar\nabla^2u(N,N)+H\scal{\bar\nabla u}{N}=-\alpha\rho^{\alpha -1}\scal{\bar\nabla\rho}{N}H_f
$$ 
where $H_f=H-\dfrac{\alpha-1}{\rho}\scal{\bar\nabla\rho}{N}$ is the weighted mean curvature
computed in \eqref{weightedmc}. From \eqref{blaplacian} we get
\begin{equation}\label{blaplaciantwo}
\bar\Delta_Au=\Delta_Au-\alpha\rho^{\alpha -1}\scal{\bar\nabla\rho}{N}H_f.
\end{equation}
Since $\bar\Delta_Au=0$, we conclude that $\Delta_Au=0$ if and only if $H_f=0$ as asserted. 
\end{proof}

In order to prove Proposition \ref{Madecr}, we need a few preliminary definitions and results.

\medskip

Let $\alpha>0$. The unique $\alpha$-surface satisfying the conditions:
\begin{equation}\label{reference}
\begin{cases}
\rho\rho''=\alpha(1+\rho'^2) &  {\rm in\ } (-\infty,\infty)\\
\rho(0)=1\,, \rho'(0)=0
\end{cases}
\end{equation}
will be called the {\it reference $\alpha$-surface}, and denoted $\Sigma(\alpha)$.
It is the unique $\alpha$-surface with equatorial radius equal to $1$. For the reader's convenience, we recall that $\Sigma(\alpha)$ is parametrized as $(\rho(t)\cos(\theta),\rho(t)\sin(\theta),t)$, $(t,\theta)\in (-\infty,\infty)\times\mathbb S^1$ and $\rho(t)$ (the distance from the $z$-axis) is the solution of \eqref{reference}.

\begin{lemma} Let $\Sigma(\alpha)$ be the reference $\alpha$-surface as in \eqref{reference}. Then:
\begin{enumerate}[a)]
\item $\rho'(t)=\sqrt{\rho^{2\alpha}(t)-1}$ for all $t$. In particular $\rho''=\alpha\rho^{2\alpha-1}$.
\item Let $\Sigma_T(\alpha)$ be the slice where $-T\leq t\leq T$. Then the function $u=\rho^{\alpha}$ defined in $\mathbb R^3$ restricts to a magnetic Steklov eigenfunction on $\Sigma_T(\alpha)$ with flux $\alpha$ associated with the eigenvalue
$$
\sigma(\Sigma_T(\alpha),\alpha)=\dfrac{\alpha}{\rho(T)}\sqrt{1-\dfrac{1}{\rho(T)^{2\alpha}}}.
$$
As $|\partial\Sigma_T(\alpha)|=4\pi\rho(T)$, we see that
$$
\bar\sigma(\Sigma_T(\alpha),\alpha):=|\partial\Sigma_T(\alpha)|\sigma(\Sigma_T(\alpha),\alpha)=4\pi\alpha\sqrt{1-\dfrac{1}{\rho(T)^{2\alpha}}}
$$
is the normalized eigenvalue associated to the eigenfunction $u=\rho^{\alpha}$.
\end{enumerate}
\end{lemma}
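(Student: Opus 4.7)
For part (a), the plan is to look for a first integral of the ODE \eqref{reference}. I would try the ansatz
$$
E(t) := (1 + \rho'(t)^2)\,\rho(t)^{-2\alpha}.
$$
Differentiating and substituting $\rho\rho'' = \alpha(1+\rho'^2)$ should yield $E'\equiv 0$. Evaluating at $t=0$ with $\rho(0)=1$, $\rho'(0)=0$ gives $E\equiv 1$, i.e.\ $1+\rho'^2 = \rho^{2\alpha}$. Hence $\rho'(t) = \pm\sqrt{\rho(t)^{2\alpha}-1}$; the sign is fixed by noting that $\rho''(0) = \alpha > 0$, so $\rho$ is strictly convex near $0$ and the positive branch is selected for $t>0$ (the symmetry $\rho(-t)=\rho(t)$, which follows from uniqueness of the Cauchy problem, takes care of $t<0$). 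Differentiating $1+\rho'^2 = \rho^{2\alpha}$ once more produces $\rho'' = \alpha \rho^{2\alpha-1}$.

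For part (b), the Steklov equation $\Delta_A u = 0$ on $\Sigma_T(\alpha)$ is already granted by Theorem \ref{geom-asurf}(d) applied to $u=\rho^\alpha$ with $A = \alpha\,d\theta$; hence the only thing to check is the boundary condition $d^A u(N) = \sigma u$ on the two circles $t=\pm T$. In the parametrization $(t,\theta)\mapsto(\rho(t)\cos\theta,\rho(t)\sin\theta,t)$ the induced metric is
$$
g = (1+\rho'(t)^2)\,dt^2 + \rho(t)^2\,d\theta^2,
$$
so the outward unit normal at $t=T$ is $N = (1+\rho'(T)^2)^{-1/2}\,\partial_t$ (and its negative at $t=-T$). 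Since $u$ depends only on $t$, we have $A(N) = \alpha\,d\theta(N)=0$, hence $d^A u(N) = du(N)$. A direct computation gives
$$
du(N)\big|_{t=T} = \frac{\alpha\,\rho(T)^{\alpha-1}\,\rho'(T)}{\sqrt{1+\rho'(T)^2}},
$$
and substituting the first integral from (a), namely $\rho'(T)=\sqrt{\rho(T)^{2\alpha}-1}$ and $1+\rho'(T)^2 = \rho(T)^{2\alpha}$, this collapses to
$$
d^A u(N)\big|_{t=T} = \frac{\alpha}{\rho(T)}\sqrt{1 - \rho(T)^{-2\alpha}}\cdot \rho(T)^{\alpha} = \sigma(\Sigma_T(\alpha),\alpha)\cdot u\big|_{t=T}.
$$
The computation at $t=-T$ is identical by the even symmetry of $\rho$ (noting the sign of $N$ flips and so does $\rho'(-T)=-\rho'(T)$). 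Finally, since both boundary circles have radius $\rho(\pm T) = \rho(T)$, one has $|\partial \Sigma_T(\alpha)| = 4\pi\rho(T)$, and multiplying yields the claimed $\bar\sigma(\Sigma_T(\alpha),\alpha)= 4\pi\alpha\sqrt{1 - \rho(T)^{-2\alpha}}$.

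There is no real obstacle: the heart of the lemma is spotting the conserved quantity $E(t)$ in part (a), after which part (b) is a routine boundary computation that exploits the fact that $A = \alpha\,d\theta$ annihilates the normal direction on any parallel circle. One minor point to be careful about is documenting that $\rho$ is even (so that the same eigenvalue appears on both boundary components), which follows from the symmetry of the initial value problem \eqref{reference}.
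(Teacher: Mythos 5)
Your proof is correct and follows essentially the same route as the paper: for (a) your conserved quantity $E(t)=(1+\rho'^2)\rho^{-2\alpha}$ is (the square of the reciprocal of) the paper's first integral obtained from $\psi(t)=\rho\sqrt{1+\rho'^2}=\rho^{1+\alpha}$, and both yield $1+\rho'^2=\rho^{2\alpha}$; for (b) you, like the paper, invoke Theorem \ref{geom-asurf} for $\Delta_A$-harmonicity and then compute the normal derivative at $t=\pm T$, a calculation the paper leaves implicit and you carry out correctly. Your remarks on the sign of $\rho'$ and the evenness of $\rho$ are accurate and slightly more careful than the paper's statement, which writes $\rho'=\sqrt{\rho^{2\alpha}-1}$ ``for all $t$'' although this literally holds only for $t\geq 0$.
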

\begin{proof} We prove $a)$. Consider the function $\psi(t)=\rho(t)\sqrt{1+\rho'(t)^2}$. A calculation shows that
$
\frac{\psi'}{\psi}=(1+\alpha)\frac{\rho'}{\rho}.
$
Integrating this identity on $[0,t]$ we get the assertion. 

\medskip

We prove now $b)$. From Theorem \ref{geom-asurf} we know that $\rho^{\alpha}$ is $\Delta_A$-harmonic on $\Sigma(\alpha)$. The assertion now follows by a straightforward calculation of the normal derivative of $\rho^{\alpha}$ at the two components of the boundary (see also the proof of Theorem \ref{geom-asurf}) .
\end{proof}

Now there is a unique, finite value of $T$ for which the slice $\Sigma_T(\alpha)$ is free boundary in some ball in $\mathbb R^3$. This can be immediately deduced by the fact that the profile curve is convex and moreover $(\rho^{-1})':(1,+\infty)\to(0,+\infty)$ decreases from $+\infty$ to $0$ (see \cite{collharr}, see also the proof of Lemma \ref{l40} here below). We denote this value $T(\alpha)$, and let $R(\alpha):= \rho(T_{\alpha})$.  We call $T(\alpha)$ the {\it height} and $R(\alpha)$ the {\it radius} of this free boundary slice. In the next lemma we give some information on the behavior of $R(\alpha)$ as function of $\alpha$.

\begin{lemma}\label{l40} Let $R(\alpha)$ be the radius of the slice of the reference $\alpha$-surface $\Sigma(\alpha)$ which is free-boundary in some ball. Then:
\begin{enumerate}[a)]
\item $R(\alpha)$ is decreasing on $(0,\infty)$.
\item  $\lim_{\alpha\to+\infty}R(\alpha)=1$.
\end{enumerate}
\end{lemma}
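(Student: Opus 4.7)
The plan is to reduce both statements to the single scalar equation
\begin{equation*}
G(R,\alpha):=\sqrt{R^{2\alpha}-1}\int_1^R\frac{ds}{\sqrt{s^{2\alpha}-1}}-R=0,
\end{equation*}
which defines $R(\alpha)$ implicitly. This comes from the free-boundary condition: since the profile $t\mapsto(t,\rho(t))$ is convex with $\rho(0)=1$, $\rho'(0)=0$, the line from the origin is tangent to it at $(T(\alpha),R(\alpha))$ iff $T\rho'(T)=\rho(T)=R$; combined with $\rho'=\sqrt{\rho^{2\alpha}-1}$ and the inverse formula $T=\int_1^R ds/\sqrt{s^{2\alpha}-1}$, this becomes $G(R,\alpha)=0$.

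For part (a) I would apply implicit differentiation. Using the identity $T=R/\sqrt{R^{2\alpha}-1}$ at any zero of $G$, a direct calculation gives
\begin{equation*}
\partial_R G=\frac{\alpha R^{2\alpha}}{R^{2\alpha}-1}>0,\qquad \frac{T}{R}\,\partial_\alpha G=\int_1^R\frac{f(R)-f(s)}{\sqrt{s^{2\alpha}-1}}\,ds,
\end{equation*}
where $f(s):=s^{2\alpha}\log s/(s^{2\alpha}-1)$. Hence the proof reduces to showing $f$ is strictly increasing on $(1,\infty)$. Differentiating, $f'(s)$ has the sign of $h(s):=s^{2\alpha}-1-2\alpha\log s$, and $h(1)=0$ together with $h'(s)=(2\alpha/s)(s^{2\alpha}-1)>0$ for $s>1$ gives $h>0$, whence $f'>0$. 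Consequently $\partial_\alpha G>0$, so $R'(\alpha)=-\partial_\alpha G/\partial_R G<0$.

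For part (b) I would use the monotonicity just established: the limit $R_\infty:=\lim_{\alpha\to\infty}R(\alpha)\in[1,R(\alpha_0)]$ exists for any $\alpha_0>0$. Assume for contradiction $R_\infty>1$. On the one hand, from $T=R/\sqrt{R^{2\alpha}-1}$ and the boundedness $R\le R(\alpha_0)$,
\begin{equation*}
T\;\le\;\frac{R(\alpha_0)}{\sqrt{R_\infty^{2\alpha}-1}}\;\le\;C\,R_\infty^{-\alpha}
\end{equation*}
for large $\alpha$, decaying exponentially. On the other hand, from $1/\sqrt{s^{2\alpha}-1}\ge s^{-\alpha}$,
\begin{equation*}
T=\int_1^R\frac{ds}{\sqrt{s^{2\alpha}-1}}\;\ge\;\int_1^{R_\infty}s^{-\alpha}\,ds=\frac{1-R_\infty^{1-\alpha}}{\alpha-1}\;\sim\;\frac{1}{\alpha},
\end{equation*}
which decays only polynomially. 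For $\alpha$ large enough these two bounds are incompatible, forcing $R_\infty=1$.

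The main technical point is the reduction of $\partial_\alpha G$ to the expression involving $f(R)-f(s)$, together with the monotonicity of $f$; these are the only nonobvious algebraic identities in the argument. The rest is an implicit-function computation for (a) and an elementary rate comparison between exponential and polynomial decay for (b).
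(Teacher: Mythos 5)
Your proof is correct. Part (a) is essentially the paper's own argument: the paper likewise encodes the free-boundary condition as the vanishing of $r\phi_\alpha'(r)-\phi_\alpha(r)$ with $\phi_\alpha(r)=\int_1^r(x^{2\alpha}-1)^{-1/2}\,dx$, applies the Implicit Function Theorem, and reduces the sign of $R'(\alpha)$ to the monotonicity of exactly your $f$ (called $h$ there), proved via the same auxiliary function $t^{2\alpha}-1-2\alpha\log t$; your $G$ differs from the paper's only by the positive factor $-\sqrt{r^{2\alpha}-1}$. Part (b) is where you genuinely diverge, and to your advantage. The paper also argues by contradiction from $R_\infty=c>1$, but it pairs the lower bound $T(\alpha)\ge\int_1^c(x^{2\alpha}-1)^{-1/2}\,dx$ --- which it asserts is bounded below by a positive constant uniformly in $\alpha$ --- against the upper bound $T(\alpha)\le\sqrt{2/\alpha}$ obtained by integrating $\alpha t\rho_\alpha^{2\alpha-1}(t)$ over $[0,T(\alpha)]$. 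In fact that lower bound tends to $0$ (it is of order $1/\alpha$; e.g. Bernoulli's inequality $x^{2\alpha}-1\ge 2\alpha(x-1)$ already gives an upper bound $\sqrt{2(c-1)/\alpha}$ for the integral), so the two rates $1/\alpha$ and $1/\sqrt{\alpha}$ do not separate and the paper's comparison does not close as written. Your version instead extracts the upper bound for $T$ from the exact tangency identity $T=R/\sqrt{R^{2\alpha}-1}$, which under the standing assumption $R\ge R_\infty>1$ forces exponential decay of $T$, manifestly incompatible with the polynomial lower bound of order $1/\alpha$. This costs nothing extra --- the identity is just the equation $G=0$ you already used in part (a) --- and it is the comparison that actually completes the argument; you may want to state explicitly that $R(\alpha)\ge 1$ (since $\rho$ increases from $\rho(0)=1$), which you use implicitly when asserting $R_\infty\in[1,R(\alpha_0)]$.
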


\begin{proof} 

Through this proof we denote by $\rho_{\alpha}$ the profile function of $\Sigma(\alpha)$ given by \eqref{reference} (i.e., we highlight the dependence on $\alpha$).

\medskip

We start by proving $a)$.  We let $\phi_{\alpha}:=\rho_{\alpha}^{-1}:[1,\infty]\to [0,\infty)$; we have the explicit expression
$$
\phi_{\alpha}(r)=\int_1^r\dfrac{1}{\sqrt{x^{2\alpha}-1}}dx, \quad \phi'_{\alpha}(r)=\dfrac{1}{\sqrt{r^{2\alpha}-1}}.
$$
This follows from the differential equation \eqref{reference} (see also \cite{collharr}). 
Since the slice $\Sigma_T(\alpha)$ is free boundary in some ball, we have that $R(\alpha)$ is the zero of the function $F_{\alpha}(r)=r\phi'_{\alpha}(r)-\phi_{\alpha}(r)$. If we let
$$
G(r,\alpha)=\dfrac{r}{\sqrt{r^{2\alpha}-1}}-\phi_{\alpha}(r)
$$
we see that $G(R(\alpha),\alpha))=0$ for all $\alpha$.  
We can apply the Implicit Function Theorem and compute
$$
R'(\alpha)=-\frac{r}{\alpha}\log(r)+\frac{(r^{2\alpha}-1)^{3/2}}{\alpha}\int_1^r\frac{t^{2\alpha}\log(t)}{r^{2\alpha}(t^{2\alpha}-1)^{3/2}}dt|_{r=R(\alpha)}
$$
Since we know that $R(\alpha)=\sqrt{R(\alpha)^{2\alpha}-1}\int_1^{R(\alpha)}\frac{1}{\sqrt{t^{2\alpha}-1}}dt$, we have
\begin{multline}
R'(\alpha)=-\frac{\sqrt{r^{2\alpha}-1}}{\alpha}\log(r)\int_1^r\frac{1}{\sqrt{t^{2\alpha}-1}}dt+\frac{(r^{2\alpha}-1)^{3/2}}{\alpha}\int_1^r\frac{t^{2\alpha}\log(t)}{r^{2\alpha}(t^{2\alpha}-1)^{3/2}}dt|_{r=R(\alpha)}\\
=-\frac{\sqrt{r^{2\alpha}-1}}{\alpha}\int_1^r\frac{\log(r)}{\sqrt{t^{2\alpha}-1}}\left(1-\frac{h(t)}{h(r)}\right)dt|_{r=R(\alpha)}
\end{multline}
where
$$
h(t)=\frac{t^{2\alpha}\log(t)}{t^{2\alpha}-1}.
$$
Now, $h'(t)=\frac{t^{2\alpha}(t^{2\alpha}-1-2\alpha\log(t))}{t(t^{2\alpha}-1)^2}$. Considering $h_1(t):=t^{2\alpha}-1-2\alpha\log(t)$ we see immediately that $h_1(1)=0$ and $h_1'(t)=2\alpha t^{-1}(t^{2\alpha}-1)>0$ for all $t>1$. Hence $h(t)$ is increasing and therefore $h(t)<h(r)$ for all $1<t<r$. Hence $1-\frac{h(t)}{h(r)}>0$ and the claim is proved.

\medskip

We prove now $b)$. By definition, if $T(\alpha)$ is the height of the reference free boundary $\alpha$-surface, we have $\phi_{\alpha}(R(\alpha))=T(\alpha)$ hence
$$
T(\alpha)=\int_1^{R(\alpha)}\dfrac{1}{\sqrt{x^{2\alpha}-1}}\,dx.
$$
Since $R(\alpha)$ is decreasing and positive, we have $\lim_{\alpha\to\infty}R(\alpha)=c$ for some $c$.  Note that $c\geq 1$. We prove the assertion by showing that the assumption $c>1$ leads to a contradiction. Note that, in that case, $R(\alpha)\geq c>1$ for all $\alpha$ and
$$
T(\alpha)\geq \int_1^{c}\dfrac{1}{\sqrt{x^{2\alpha}-1}}\,dx\geq C>0
$$
for all $\alpha$. Then, it is enough to prove that 
$$
\lim_{\alpha\to\infty}T(\alpha)=0.
$$
The free boundary condition implies that $T(\alpha)$ is the unique zero of the function
$F_{\alpha}(t)=t\rho_{\alpha}'(t)-\rho_{\alpha}(t)$; since $\rho_{\alpha}''=\alpha\rho_{\alpha}^{2\alpha-1}$ we see $F'_{\alpha}(t)=\alpha t\rho_{\alpha}^{2\alpha-1}(t)$. Integrating on $[0,T(\alpha)]$ we obtain the identity:
$$
\dfrac 1{\alpha}=\int_0^{T(\alpha)}t\rho_{\alpha}^{2\alpha-1}(t)\,dt.
$$
When $\alpha\geq\frac12$, since $\rho_{\alpha}\geq 1$, we obtain $\rho_{\alpha}^{2\alpha-1}(t)\geq 1$, hence 
$$
T(\alpha)\leq\sqrt{\dfrac{2}{\alpha}},
$$
which tends to zero when $\alpha\to\infty$. The proof is complete.
\end{proof} 

\color{black}
Recall that the critical $\alpha$-surface is the free-boundary $\alpha$-surface meeting the {\it unit} ball orthogonally; it is obtained by suitably scaling $\Sigma(\alpha)$. We denote it by $\Sigma_c(\alpha)$, and we let $T_c(\alpha)$ be its height and $R_c(\alpha)$ its radius. 

\begin{lemma}
\begin{enumerate}[a)]
\item One has the formula
$
R_c(\alpha)=\sqrt{1-\dfrac{1}{R(\alpha)^{2\alpha}}}.
$
\item Let $M(\alpha)$ be the conformal modulus of the critical $\alpha$-surface $\Sigma_c(\alpha)$. Then:
$$
\tanh(\alpha M(\alpha))=\sqrt{1-\dfrac{1}{R(\alpha)^{2\alpha}}}.
$$
\end{enumerate}
\end{lemma}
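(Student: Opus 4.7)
The plan is to reduce both statements to two ingredients already available: the first-order relation $\rho_\alpha'=\sqrt{\rho_\alpha^{2\alpha}-1}$ (equivalently $1+\rho_\alpha'^2=\rho_\alpha^{2\alpha}$) from part (a) of the previous lemma, and the geometric set-up of the free-boundary slice of the reference surface $\Sigma(\alpha)$.

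For part (a), I would first observe that the critical $\alpha$-surface $\Sigma_c(\alpha)$ is obtained from $\Sigma(\alpha)$ by a global rescaling in $\mathbb R^3$ that brings its boundary onto the unit sphere; since the boundary circle of the free-boundary slice at $t=\pm T(\alpha)$ of $\Sigma(\alpha)$ lies on the sphere of radius $\sqrt{R(\alpha)^2+T(\alpha)^2}$, this immediately gives
$$R_c(\alpha)=\frac{R(\alpha)}{\sqrt{R(\alpha)^2+T(\alpha)^2}}.$$
Next I would derive the relation between $R(\alpha)$ and $T(\alpha)$ coming from orthogonality of $\Sigma(\alpha)$ with its circumscribed sphere: the surface normal $N$ from the proof of Theorem \ref{geom-asurf} must be tangent to the sphere at the boundary, i.e.\ $\langle N,p\rangle=0$ where $p$ is the position vector. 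Evaluating at $\theta=0$, $t=T(\alpha)$, this condition reads $T\rho_\alpha'(T)=\rho_\alpha(T)=R(\alpha)$, and plugging in $\rho_\alpha'(T)=\sqrt{R(\alpha)^{2\alpha}-1}$ yields $T(\alpha)=R(\alpha)/\sqrt{R(\alpha)^{2\alpha}-1}$. Substituting this into the displayed formula for $R_c(\alpha)$ and simplifying gives $R_c(\alpha)=\sqrt{1-R(\alpha)^{-2\alpha}}$.

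For part (b), the conformal modulus is invariant under ambient homotheties, so it is enough to compute $M(\alpha)$ on $\Sigma(\alpha)$ itself. In the parametrization $(t,\theta)\in[-T(\alpha),T(\alpha)]\times\mathbb S^1$ the induced metric is $(1+\rho_\alpha'^2)\,dt^2+\rho_\alpha^2\,d\theta^2$. The change of variable $u(t)$ defined by $du/dt=\sqrt{1+\rho_\alpha'^2}/\rho_\alpha$ puts the metric in conformal form $\rho_\alpha^2(du^2+d\theta^2)$, and the identity $1+\rho_\alpha'^2=\rho_\alpha^{2\alpha}$ reduces this to $du=\rho_\alpha^{\alpha-1}\,dt$. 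Symmetry of the profile then gives
$$M(\alpha)=\int_0^{T(\alpha)}\rho_\alpha(t)^{\alpha-1}\,dt.$$
Setting $v=\rho_\alpha(t)^\alpha$ and using $\rho_\alpha'=\sqrt{\rho_\alpha^{2\alpha}-1}$ turns the integrand into $dv/(\alpha\sqrt{v^2-1})$, so $M(\alpha)=\frac{1}{\alpha}\arccosh(R(\alpha)^\alpha)$. Therefore $\cosh(\alpha M(\alpha))=R(\alpha)^\alpha$, and $\tanh(\alpha M(\alpha))=\sqrt{1-1/\cosh^2(\alpha M(\alpha))}=\sqrt{1-R(\alpha)^{-2\alpha}}$.

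Neither part presents a genuine obstacle; the only care needed is in the formulation of the free-boundary condition in (a) --- namely that $N$ be tangent, not parallel, to the position vector --- which is consistent with the known critical catenoid relation $T\tanh T=1$ in the case $\alpha=1$.
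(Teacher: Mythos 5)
Your proof is correct, but it follows a genuinely different route from the paper's. For part a) the paper never computes the rescaling factor $\sqrt{R(\alpha)^2+T(\alpha)^2}$ explicitly: it instead equates the \emph{normalized} Steklov eigenvalues associated with the eigenfunction $u=\rho^\alpha$ on the slice $\Sigma_T(\alpha)$ and on $\Sigma_c(\alpha)$, using that normalized eigenvalues are homothety-invariant, that the former equals $4\pi\alpha\sqrt{1-R(\alpha)^{-2\alpha}}$ (previous lemma) and that the latter equals $4\pi R_c(\alpha)\cdot\alpha$ by Theorem \ref{geom_car_asurf}. For part b) the paper invokes the $\sigma$-homothety of $\Sigma_c(\alpha)$ with the flat cylinder $\Gamma_{M(\alpha)}$ and then must identify which branch of the cylinder's spectrum corresponds to $\rho^\alpha$ (the unique eigenvalue whose eigenfunction is real, symmetric and constant on each boundary component), concluding $\tanh(\alpha M(\alpha))=R_c(\alpha)$. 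You instead compute everything directly: the free-boundary condition $T\rho_\alpha'(T)=\rho_\alpha(T)$ together with $\rho_\alpha'=\sqrt{\rho_\alpha^{2\alpha}-1}$ gives the scaling factor, and the conformal modulus is obtained by explicitly integrating $\sqrt{1+\rho_\alpha'^2}/\rho_\alpha=\rho_\alpha^{\alpha-1}$, yielding $\cosh(\alpha M(\alpha))=R(\alpha)^\alpha$ in closed form (a relation the paper only extracts later, in the proof of Proposition \ref{Madecr}, as a consequence of the lemma). Your argument is more elementary and self-contained, and it sidesteps the spectral identification step in b), at the cost of redoing by hand the geometric bookkeeping that the paper's spectral machinery packages for free; both computations check out, including the consistency check $T\tanh T=1$ and $M=T$ for the critical catenoid at $\alpha=1$.
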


\begin{proof}
We prove first $a)$. Recall that normalized Steklov eigenvalues are invariant by homotheties; hence for the eigenvalues corresponding to the eigenfunction $u=\rho^{\alpha}$ (which is an eigenfunction for both $\Sigma_T(\alpha)$ and $\Sigma_c(\alpha)$) one sees that:
$$
|\partial\Sigma_T(\alpha)|\sigma(\Sigma_T(\alpha),\alpha)=
|\partial\Sigma_c(\alpha)|\sigma(\Sigma_c(\alpha),\alpha).
$$
By the previous lemma, the left hand side is equal to 
$
4\pi\alpha\sqrt{1-\dfrac{1}{R(\alpha)^{2\alpha}}};
$
on the other hand we know from Theorem \ref{geom_car_asurf} that $\sigma(\Sigma_c(\alpha),\alpha)=\alpha$ and 
$\abs{\bd\Sigma_c(\alpha)}=4\pi R_c(\alpha)$. Equating we get the assertion. 

\medskip

We prove now $b)$. Since $\Sigma_c(\alpha)$ is a rotationally invariant, symmetric annulus,  it is $\sigma$-homothetic to the flat cylinder $\Gamma_{M(\alpha)}$ and then the respective normalized spectra are the same. 
For any $\alpha>0$ both $4\pi\alpha R_c(\alpha)$ and $4\pi\alpha\tanh(\alpha M(\alpha))$ belong to the normalized spectrum of $\Sigma_c(\alpha)$. They actually coincide, since $4\pi\tanh(\alpha M(\alpha))$ is the only eigenvalue of $\Gamma_{M(\alpha)}$ with associated eigenfunction which is real, symmetric (i.e., the its value is the same at the two boundary components) and constant on each boundary component. Hence it must coincide with the pull-back of $\rho^{\alpha}$ which is the eigenfunction corresponding to the eigenvalue $4\alpha\pi R_c(\alpha)$ on $\Sigma_c(\alpha)$. This shows that
$$
\tanh(\alpha M(\alpha))=R_c(\alpha),
$$
and the assertion follows from part $a)$. 
\end{proof}

We are now ready to prove Proposition \ref{Madecr}.

\begin{proof}[Proof of Proposition \ref{Madecr}]

Differentiating the identity
$
M(\alpha)=\dfrac{1}{\alpha}{\rm arctanh}\Big(\sqrt{1-\dfrac{1}{R(\alpha)^{2\alpha}}}\Big)
$
 we get
\begin{multline}
M'(\alpha)=\dfrac{1}{\alpha^2R(\alpha)\sqrt{1-R(\alpha)^{-2\alpha}}}\cdot\\
\cdot\Big[R(\alpha)\Big(\log(R(\alpha)^{2\alpha})-\sqrt{1-R(\alpha)^{-2\alpha}}\cdot {\rm arctanh}\Big(\sqrt{1-R(\alpha)^{-2\alpha}}\Big)\Big)+\alpha^2R'(\alpha)\Big]
\end{multline}
Since $R'(\alpha)\leq 0$ it is enough to show that 
$$
h(x):= \log x-\sqrt{1-x^{-2}}{\rm arctanh}(\sqrt{1-x^{-2}})<0,
$$
where we have put $x=R(\alpha)^{\alpha}>1$. Now
$
\lim_{x\to 1}h(x)=0,
$
and
$$
h'(x)=-\dfrac{{\rm arccoth}(\dfrac{x}{\sqrt{x^2-1}})}{x^2\sqrt{x^2-1}}<0
$$
which implies that $h(x)<0$ for all $x$.

\medskip Now we compute the limits of $M(\alpha)$. We start by showing that $\lim_{\alpha\to+\infty}M(\alpha)=0$. Recall that
$
\tanh(\alpha M(\alpha))=\sqrt{1-\dfrac{1}{R(\alpha)^{2\alpha}}}
$
hence
$$
\dfrac{1}{R(\alpha)^{2\alpha}}=1-\tanh^2(\alpha M(\alpha))=\dfrac{1}{\cosh^2(\alpha M(\alpha))}
$$
and therefore
$$
R(\alpha)^{\alpha}=\cosh(\alpha M(\alpha)).
$$
Assume that $\lim_{\alpha \to\infty} M(\alpha)=C>0$. Then, since $M(\alpha)$ is decreasing, we would have $M(\alpha)\geq C$ for all $\alpha$. This implies that
$$
R(\alpha)^{\alpha}\geq \cosh(\alpha C)\geq \dfrac{e^{\alpha C}}{2},
$$
which in turn implies that
$
R(\alpha)\geq \dfrac{e^{C}}{2^{\frac 1{\alpha}}}.
$
Passing to the limit as $\alpha\to\infty$ we get:
$$
\lim_{\alpha\to\infty}R(\alpha)\geq e^{C}>1
$$
contradicting the fact that $R(\alpha)\to 1$ as $\alpha\to\infty$.

\medskip

Finally we prove that $\lim_{\alpha\to 0^+}M(\alpha)=+\infty$. We use again the identity
$
R(\alpha)^{\alpha}=\cosh(\alpha M(\alpha))
$
and assume that $M(\alpha)\to C<+\infty$ as $\alpha\to 0$. Then, by monotonicity, we have $M(\alpha)\leq C$ for all $\alpha$ and 
$$
R(\alpha)\leq (\cosh(\alpha C))^{\frac 1{\alpha}}=e^{\frac 1{\alpha}\log(\cosh(C\alpha))}.
$$
Near $\alpha=0$ we see 
$
\log(\cosh(C\alpha))=\dfrac{C^2}{2}\alpha^2+O(\alpha^3)
$
which implies that
$
\lim_{\alpha\to 0}R(\alpha)\leq 1.
$
This is evidently impossible, because then, since $R(\alpha)$ is decreasing and 
tends to $1$ as $\alpha\to \infty$, it would give $R(\alpha)=1$ for all $\alpha$.
\end{proof}

We conclude this section with the proof of  Theorem \ref{geom_car_asurf}.

\begin{proof}[Proof of Theorem \ref{geom_car_asurf}]
 Assume that $\Sigma$ is a critical $\alpha$-surface. Then $\rho^{\alpha}$ is $A$-harmonic on $\Sigma$; let $N$ denote the outer unit normal to $\partial\Sigma$. Then
$$
d^A\rho^{\alpha}(N)=d\rho^{\alpha}(N)-i\rho^{\alpha}A(N).
$$
Now $A(N)=0$ because, since $\Sigma$ is rotationally invariant, one has that $A$ is tangential to the boundary of $\Sigma$, hence
$$
d^A\rho^{\alpha}(N)=d\rho^{\alpha}(N)=\alpha\rho^{\alpha-1}\scal{\bar\nabla\rho}{N}.
$$
Now one has $\rho=\sqrt{x^2+y^2}$ hence 
$$
\bar\nabla\rho=\dfrac{1}{\rho}\left(x\frac{\partial}{\partial x}+y\frac{\partial}{\partial y}\right).
$$
On the other hand $\bd\Sigma\perp\bd B$ so that
$$
N=X=x\derive{}{x}+y\derive{}{y}+z\derive{}{z}
$$
and therefore
$$
\scal{\bar\nabla\rho}{N}=\rho.
$$
The conclusion is that
$$
d^A\rho^{\alpha}(N)=\alpha\rho^{\alpha},
$$
which proves the first half of the theorem. 

\medskip

Conversely, assume that $\rho^{\alpha}$ is a Steklov eigenfunction associated to $\alpha$. Since $\rho^{\alpha}$ is $\bar\Delta_A$-harmonic in $\mathbb R^3$ and restricts to a $\Delta_A$-harmonic function  on $\Sigma$ (recall $A=\frac{\alpha}{x^2+y^2}(-ydx+xdy)$), we see that
 $\Sigma$   must be an $\alpha$-surface by Theorem \ref{geom-asurf}. It remains to show that $\partial\Sigma\perp\bd B$ or, equivalently, that $N=X$ at every point $p$ of the boundary, where $X$ is the position vector. Now, by assumption, $d^A\rho^{\alpha}(N)=\alpha\rho^{\alpha}$ hence one must have
$
\scal{\bar\nabla\rho}{N}=\rho.
$
On the other hand we also have $\scal{\bar\nabla\rho}{X}=\rho$ hence 
$$
\scal{\bar\nabla\rho}{X}=\scal{\bar\nabla\rho}{N}.
$$ 
The vectors $N,X$ and $\bar\nabla\rho$ are unit vectors and, applied at $p\in\bd\Sigma$, are contained in the same plane $\Pi$, which we can assume that it is the $xz$-plane (the plane containing the profile); moreover $X$ and $N$ form an acute angle and $N$ is tangent to the profile at $p$.  Assume $X\ne N$, so that $(X,N)$ form a basis of $\Pi$ and one has $\bar\nabla\rho=aX+bN$ for some $a,b\in\mathbb R$. Then we have
$$
0=\scal{\bar\nabla\rho}{X-N}=(a-b)(1-\scal XN).
$$
Now it is clear that $a\ne b$, otherwise $\bar\nabla\rho=\derive{}{x}$ would belong to the cone determined by $X$ and $N$, which is clearly impossible because $X$ and $N$ form a positive, acute angle with $\derive{}{ x}$. Then $\scal XN=1$ and since $X$ and $N$ are unit vectors we see that $X=N$. 
\end{proof}

\section{Proofs of Section \ref{geom2}}\label{app:7}

{\bf Proof of Theorem \ref{FSs2}.} a) We compute the matrix of the induced metric in the coordinates $(t,\theta)$. A basis of the tangent space at any point of $\Sigma$ is then $(\bd_t,\bd_{\theta})$
where  $\bd_t=\derive{}{t}, \bd_\theta=\derive{}{\theta}$.  We have
$$
g_{\nu,11}=g_{\nu}(\bd_t,\bd_t)=\Re\left(h\left(d^AF(\bd_t), d^AF(\bd_t)\right)\right)
$$
and similarly for the other components. A straightforward calculation using the identities $A(\bd_t)=0, A(\bd_{\theta})=\nu$ gives:
$$
\twosystem
{d^AF(\bd_t)=\Big(a\sinh((1-\nu)t)e^{i\theta}, a\cosh(\nu t)\Big)}
{d^AF(\bd_{\theta})=\Big(i(1-\nu)u_1,-i\nu u_2\Big)}
$$
and one arrives at the expressions:
$$
g_{\nu,11}=a^2\Big(\sinh^2((1-\nu)t)+\cosh^2(\nu t)\Big), \quad g_{\nu,22}=a^2\Big(\cosh^2((1-\nu)t)+\sinh^2(\nu t)\Big), \quad g_{\nu,12}=0,
$$
It is immediate to observe that $g_{\nu,11}=g_{\nu,22}$ hence the metric is conformal to the flat metric. 

\smallskip

b)  We see that a vector normal to $F(\Gamma_{M^*})$ inside $\mathbb C\times \mathbb R$ is given by
$$
U(t,\theta)=(-\cosh(\nu t)e^{i\theta},\sinh((1-\nu)t)),
$$
being orthogonal to both $dF(\bd_t)$ and $dF(\bd_{\theta})$.
A computation shows that
$$
\scal{F}{U}=\dfrac{a^2}{\nu(1-\nu)}\Big(-\nu\cosh((1-\nu)t)\cosh(\nu t)+(1-\nu)\sinh (\nu t)\sinh((1-\nu)t)\Big)
$$
which is zero when $t=\pm M^*$ because $M^*$ solves $(1-\nu)\tanh((1-\nu)M^*)=\nu\coth(\nu M^*)$. Hence the immersion is free boundary.

\smallskip

c) is already proved in Corollary \ref{cor_max_RI}.

\medskip

{\bf Proof of Theorem  \ref{FSmagnetic}.} For the proof, we consider cartesian coordinates $x,y,z$ on the unit ball $B$ of $\mathbb R^3$, identified with $\mathbb C\times\mathbb R$, and write $u_1=x+iy$ and $u_2=z$. Note that $x,y,z$ are real valued functions on $\Sigma$. Let $N_{\bd\Sigma}$ be the exterior unit normal vector to $\bd\Sigma$ in $\Sigma$: it is a real vector tangent to $\Sigma$; as the boundary of $\Sigma$ is a parallel circle, it is an integral curve of the dual vector field of $A$, and we have immediately that $A(N_{\bd\Sigma})=0$. This means that,  for any complex function $u$ on $\Sigma$ we have 
$$
d^Au(N_{\bd\Sigma})=du(N_{\bd\Sigma}).
$$
Let $X=x\bd_x+y\bd_y+z\bd_z$ be the position vector. Note that the boundary of $\Sigma$ meets the boundary of $B$ orthogonally if and only if $N_{\bd\Sigma}=X$ at every point of $\bd\Sigma$.

Assume first that $\Sigma$ is $A$-minimal and meets the boundary orthogonally. Then we have $\Delta_Au_1=\Delta_Au_2=0$ on $\Sigma$; on $\bd\Sigma$ we have: 
$$
d^Au_1(N_{\Sigma})=du_1(N_{\Sigma})=du_1(X)=(dx+idy)(x\bd_x+y\bd_y+z\bd_z)=x+iy=u_1
$$
which proves that $u_1$ is a Steklov eigenfunction with eigenvalue $1$. The same proof applies to $u_2$, which proves the first part. 

Now assume that $u_1$ and $u_2$ are Steklov eigenfunctions with eigenvalue $1$. This means that 
$\Delta_Au_1=\Delta_Au_2=0$ on $\Sigma$, hence $\Sigma$ is $A$-minimal; it remains to show that $N_{\bd\Sigma}=X$. For that we use the assumption that $d^Au_k(N_{\bd\Sigma})=u_k$ on $\bd\Sigma$ for $k=1,2$. Now:
$$
x+iy=u_1=d^Au_1(N_{\bd\Sigma})=du_1(N_{\bd\Sigma})=(dx+idy)(N_{\bd\Sigma})
$$
which shows that $dx(N_{\bd\Sigma})=x, dy(N_{\bd\Sigma})=y$; in the same way, one proves that $dz(N_{\bd\Sigma})=z$. Finally, since $dx(N_{\bd\Sigma})=\scal{N_{\bd\Sigma}}{\bd_x}$ etc. we see:
$$
\begin{aligned}
N_{\bd\Sigma}&=dx(N_{\bd\Sigma})\bd_x+dy(N_{\bd\Sigma})\bd_y+dz(N_{\bd\Sigma})\bd_z\\
&=x\bd_x+y\bd_y+z\bd_z\\
&=X
\end{aligned}
$$
and the proof is complete.

\section*{Acknowledgements} The authors acknowledge support from the INDAM-GNSAGA project ``Analisi Geometrica: Equazioni alle Derivate Parziali e Teoria delle Sottovarietà''. The first author acknowledges support from the project ``Perturbation problems and asymptotics for elliptic differential equations: variational and potential theoretic methods'' funded by the MUR Progetti di Ricerca di Rilevante Interesse Nazionale (PRIN) Bando 2022 grant 2022SENJZ3. The second author acknowledges support from the project ``Real and Complex Manifolds: Geometry and Holomorphic Dynamics'' funded by the MUR Progetti di Ricerca di Rilevante Interesse Nazionale (PRIN) Bando 2022 grant 2022AP8HZ9. 

%\addcontentsline{toc}{section}{Bibliography}
%\listoffigures
%\nocite{*} %Permet d'afficher toute la bibliographie sans forcément avoir fait référence dans le document
\bibliographystyle{plain}
\bibliography{biblioCPSmagnetic1}
\end{document}